\theoremstyle{plain}
\newtheorem{maintheorem}{Theorem}
\newtheorem{maincorollary}{Corollary}
\newtheorem{theorem}{Theorem }[section]
\newtheorem{proposition}[theorem]{Proposition}
\newtheorem{lemma}[theorem]{Lemma}
\theoremstyle{definition} \theoremstyle{remark}
\newtheorem{remark}[theorem]{Remark}
\newtheorem{example}[theorem]{Example}
\newtheorem{definition}[theorem]{Definition}
\newcommand{\field}[1]{\mathbb{#1}}
\newcommand{\NN}{\field{N}}
\newcommand{\topp}{\operatorname{top}}
\newcommand{\Ptop}{P_{\topp}}
\newcommand{\al} {\alpha}
\newcommand{\de} {\delta}       
\newcommand{\vep}{\varepsilon}
\renewcommand{\epsilon}{\varepsilon}
      \newcommand{\La}{\Lambda}
\newcommand{\si} {\sigma}
\newcommand{\cC}{\mathcal{C}}
\newcommand{\cP}{\mathcal{P}}
\newcommand{\cM}{\mathcal{M}}
\newcommand{\cQ}{\mathcal{Q}}
\newcommand{\Leb}{\mbox{Leb}}
\newcommand{\R}{{\mathbb R}}
\begin{document}

\title[Gluing orbit property, hyperbolicity and large deviations for flows]
{The gluing orbit property, uniform hyperbolicity and large  deviations principles for semiflows}

\author{Thiago Bomfim and Paulo Varandas}

\address{Departamento de Matem\'atica,
  Universidade Federal da Bahia\\
  Av. Ademar de Barros s/n, 40170-110 Salvador, Brazil\\
  }
\email{paulo.varandas@ufba.br}
\urladdr{www.pgmat.ufba.br/varandas}
\email{tbnunes@ufba.br}
\urladdr{https://sites.google.com/site/homepageofthiagobomfim/}

\date{\today}

\begin{abstract}
In this article we introduce a gluing orbit property, weaker than specification, for both maps and flows. We prove that flows
with the $C^1$-robust gluing orbit property are uniformly hyperbolic and that every uniformly hyperbolic flow satisfies the
gluing orbit property. We also prove a level-1 large deviations principle and a level-2 large deviations lower bound for
semiflows with the gluing orbit property. As a consequence we establish a level-1 large deviations
principle for hyperbolic flows and every continuous observable, and also a level-2 large deviations lower bound.
Finally, since many non-uniformly hyperbolic flows can be modeled as suspension flows we also provide
criteria for such flows to satisfy uniform and non-uniform versions of the gluing orbit property.
\end{abstract}

\keywords{Gluing orbit property,  specification, semiflows, hyperbolicity, stability,
large deviations}
 \footnotetext{2000 {\it Mathematics Subject classification}:
37D20, 
37C20, 
37C50,   	
60F10, 
37C75}  
\maketitle


\section{Introduction}

After the notion of uniform hyperbolicity has been introduced in the seventies by Smale~\cite{Smale1967},
the study of the thermodynamical formalism for uniformly hyperbolic maps and flows has drawn the attention
of many researchers. The construction of physical, Sinai-Ruelle-Bowen and equilibrium measures and the study
of their statistical properties are some well studied topics. Among the statistical properties, the rates of decay of correlations
and large deviations turned out to be much more difficult problem in the time-continuous setting rather than for discrete
time dynamics.
In fact, while for uniformly hyperbolic diffeomorphisms every H\"older continuous potential admits a unique equilibrium state, which is a Gibbs measure and has exponential decay of correlations (see ~\cite{Bo75, Ru76, Si68})
the counterpart of these mixing results for hyperbolic flows was soon proved to be false. Examples of flows that are uniformly hyperbolic but with arbitrarily slow mixing rates were given by Ruelle~\cite{ruelle1983} and later studied by Pollicott~\cite{Pol}.
For surveys on mixing rates for hyperbolic flows we refer the reader towards the introductions of~\cite{liverani2004, FMT}.

In the nineties, Young, Kifer and Newhouse~\cite{You90, Kifer, KiferN} addressed the question of the velocity
of convergence of ergodic averages establishing a connection between the theory of large deviations in probability to the realm of dynamical systems, a topic that has given much description of the chaotic
features of dynamical systems. L.-S. Young's thermodynamical approach to provide large deviations principles for
Gibbs measures and all continuous observables usually requires the uniqueness of equilibrium states and some form
of specification, which is common among hyperbolic diffeomorphisms. Indeed, every diffeomorphism restricted to
a topologically mixing hyperbolic set satisfies the specification property (see e.g.~\cite{Katok}).
Other approaches to large deviations whenever the pressure function is differentiable, as the one used by Kifer~\cite{Kifer},
lead to stronger results although often require observables to be at least H\"older continuous.

For uniformly hyperbolic flows a unified method for large deviations using the thermodynamical approach of \cite{You90}
and the specification property drops dramatically since uniformly hyperbolic flows may be even not topologically mixing.
Nevertheless, Kifer~\cite{Kifer} and Waddington~\cite{Wad96}, among other limit theorems, established a
large deviations principle for hyperbolic flows and regular observables (at least H\"older continuous).
While good spectral properties of transfer operators imply in other strong consequences, its extension for
a broad non-uniformly hyperbolic context usually requires a ``case by case" study. To push further the analysis and to be
able to consider more general continuous observables, it is natural to introduce other tool that could replace specification
as a mechanism to prove large deviations principles.
In fact, the recent revived interest for specification properties and large deviations in the last decade
shows that the original idea of specification, which corresponds to a strong shadowing of
pieces of orbits, introduced by Bowen~\cite{Bo71}, is far from generating an old fashioned mechanism
to study the topological and ergodic features of the dynamical system. 
While the strong specification property fails to extend beyond
uniformly hyperbolic diffeomorphisms and flows (c.f.\cite{SVY14,SVY15})) many other non-uniform notions
have been introduced to reflect non-uniform hyperbolicity (c.f. \cite{PS05,OT,Va12}).
In particular one expects the gluing orbit property to be an useful tool to replace the specification
property e.g. in the study of multifractal formalism for non-uniformly hyperbolic flows. Just as an illustration
the gluing orbit property can be proved to hold for suspension flows over the Manneville-Pomeau. We refer the reader to
Section~\ref{sec:examples} for some examples. A similar notion of gluing for $C^1$-diffeomorphisms was introduced in \cite{ST},
referred as transitive specification property, where the authors prove that this is equivalent to uniform hyperbolicity.

In this paper we shall address on the ergodic theory of semiflows with the gluing orbit property and also provide
a characterization of $C^1$-smooth flows for which this property holds robustly.
One first goal here is to prove large deviations estimates for 
semiflows with the gluing orbit property.
We prove a level-1 large deviations principle for any \emph{continuous} observable and also prove a level-2 large deviations lower
bound for semiflows with the gluing orbit property. In both cases, the estimates and the the rate function are expressed in
terms of the thermodynamical  quantities and probability measures that invariant either by the time-one map or by the flow
(c.f. Theorems~\ref{thm:LB} and ~\ref{thm:LB2}).
Since Axiom A flows are semi conjugate to the suspension
flows over subshifts of  finite type, and these
satisty the above mentioned property (as a consequence of Theorem~\ref{thm:gluingAA} in Section~\ref{sec:statements}),
then a level-1 large deviation principle holds for every transitive hyperbolic flow and every continuous observable.
Even in the hyperbolic case our results provide a simpler proof of the level-1 large deviations considered in~\cite{Wad96}, applies
to a wider class of observables and yields a level-2 large deviations lower bound.
Let us mention that important level-1 large deviations estimates for non-uniformly hyperbolic flows
were obtained e.g. by Melbourne and Nicol~\cite{Mel,MN08}, Ara\'ujo~\cite{Ar07} and Ara\'ujo and Bufetov~\cite{AB11},
where the observables considered are required to have larger regularity than continuity. Most of these
results only consider large deviations upper
bounds.
%
A second goal here is, in view of the previous discussion, to ask whether if, under some additional conditions, the
specification and gluing orbit properties do coincide. Such extra conditions could be from a topological nature (e.g. topological mixing)
or on the smooth structure (e.g. the conditions to hold robustly within a $C^1$ neighborhood of the original flow).
Motivated by the results of Sakai, Sumi, Yamamoto~\cite{SSY} and their extension for flows by Arbieto, Senos, Sodero~\cite{AST}
we prove that $C^1$-robustly, the gluing orbit and the specification properties are equivalent to the topological
mixing and uniform hyperbolicity of the flow (see Theorem~\ref{thm:robust} and Corollary~\ref{cor:robust}).
Finally, motivated by the fact that many flows can be modeled by suspension semiflows, we prove some criteria
for suspension semiflows to satisfy the gluing orbit property.

This article is organized as follows.
Definitions and the statement of our main results are given in Section~\ref{sec:statements}. In Section~\ref{sec:examples}
we give some examples to which our results apply while in Section~\ref{open} we shall make further comments
and discuss some open questions.
Section~\ref{proofs} is devoted to the proof of the main results concerning the gluing orbit property
and its relation with hyperbolicity. In Section~\ref{sec:LDP} we use the gluing orbit property to provide
large deviations upper and lower bounds and establish large deviation principles for flows. In section~\ref{sec:criteria} we provide criteria for such flows satisfy uniform and non-uniform versios of the gluing orbit property. Finally, we include an Appendix where
we discuss , for suspension flows, the relation between a tempered variation condition for observables on the manifold
and the same condition for the reduced observable on the base dynamics.

\section{Preliminaries and Statement of the main results}
\label{sec:statements}


\subsection{Preliminaries}

In this section we shall recall some notions that will be necessary for the understanding of our main results
and introduce two notions of a gluing orbit property. The reader may decide to skip this section in a first
reading and to return to it whenever its makes necessary for the understanding of the article.
%

\subsubsection{Hyperbolic, sectional-hyperbolic and suspension flows}\label{sec:suspension}

In this subsection we recall some preliminaries on suspension semiflows, uniform and sectional hyperbolicity for flows.

\subsubsection*{Suspension semiflows}\label{subsec:suspension}

Assume that $M$ be a measurable space and $f$ be a measurable map on $M$. Given an
$f$-invariant probability measure $\mu$ and a measurable \emph{roof function}
$\rho \colon M \to [0, +\infty)$ we define the \emph{suspension semiflow}  $(X_t)_{t\ge 0}$ over $f$
by  $X_t(x,s)=(x,s+t)$, acting on the quotient space
$$
M_\rho = \{ (x,t) \in M \times \mathbb R_0^+ :  0 \le t \le \rho(x)\} / \sim
$$
where $\sim$ is the equivalence relation given by $(x,\rho(x))\sim (f(x),0)$.  In these coordinates $(X_t)_t$ coincides
with the flow consisting in the displacement along the ``vertical" direction. If $f$ is invertible and $\rho \in L^1(\mu)$
it is not difficult to check that $(X_t)_t$ defines a flow and it preserves the probability measure
$
\overline\mu=(\mu \times \Leb) / \int \rho \, d\mu,
$
where $\Leb$ denotes the Lebesgue measure on the real line.
Furthermore, observe that $\overline \mu$ is uniquely defined by the previous expression provided
the roof function $\rho$ is bounded away from zero. Given $\psi: M_{\rho} \rightarrow \mathbb{R}$ we associate the observable $\overline{\psi} : M \rightarrow \mathbb{R}$ defined as $\overline{\psi}(x) = \int_{0}^{\rho(x)}\psi(x , t)dt$.
We endow the space $M_\rho$ with the Bowen-Walters distance (we refer the reader to the beginning of Section~\ref{sec:criteria} for the precise definition).

\subsubsection*{Hyperbolic and sectional-hyperbolic flows}

Let $M$ be a closed Riemannian manifold,
$d$ denote the induced Riemannian distance in $M$,
$\|\cdot\|$ the Riemannian norm. 
Let  $(X_t)_t$ be a smooth flow on $M$ and $\Lambda\subseteq M$ be a compact and $(X_t)_t$-invariant set.
We say that the flow $(X_t)_t$ to $\La$ is  \emph{uniformly hyperbolic} on $\Lambda$ (or simply that $\Lambda$ is a uniformly hyperbolic set) if there exists a $DX_t$-invariant
and continuous splitting $T_\La N= E^-\oplus X \oplus E^+$ and
constants $C>0$ and $0<\theta_1<1$ such that
$$
\|DX_t \mid E^-\| \leq C \theta_1^t
    \quad\text{and}\quad
\|(DX_t)^{-1} \mid E^+\| \leq C \theta_1^t,
    \; \forall t \geq 0
$$
for every $x \in M$.  A flow $(X_t)_t$ is said to be (i) \emph{Anosov} if the whole manifold $M$ is a hyperbolic set; and
(ii) \emph{Axiom A} if its non-wandering set $\Omega$ is a hyperbolic set with a dense subset of periodic orbits.
Uniformly hyperbolic flows have been well
studied since the 1970's and, in particular, their geometric
structure is very well understood. It follows from the work
of Bowen, Sinai and Ruelle~\cite{BR75,Bo75,Si68} that hyperbolic flows admit finite
Markov partitions and that are semi-conjugated to suspension
flows over subshifts of finite type.

We say that a $X_t$-invariant compact set $\Lambda$ is \emph{sectional-hyperbolic}
if every singularity in $\Lambda$ is hyperbolic and there exist a continuous non-trivial invariant splitting $T_\Lambda M=E^s\oplus E^c$ over $\Lambda$ and constants $C > 0$ and $\lambda \in (0,1)$ such that
for every $x\in \Lambda$ and $t\geq0$
\begin{itemize}
\item[(i)] $\|DX_t\mid E^s_x\| \; \|DX_{-t}\mid E^c_{X_t(x)} \| < C \lambda^t $;
\item[(ii)] $\|DX_t(x) \mid E^s_x \| \leq C \lambda^t$;
\item[(iii)]  $|\det(DX_t(x)\mid_{L_x})|>C\lambda^t$ for every plane $L_x\subset F_x$.
\end{itemize}
We say that $p$ is a hyperbolic critical element if $p$ is either a hyperbolic singularity or
a hyperbolic periodic orbit.
\subsubsection{Specification and gluing orbit properties}\label{d.strong.specification}
Let us first recall some specification properties in the discrete time setting.
%
We say that a continuous map $f: X \to X$ on a compact metric space $X$
satisfies the \emph{specification property} if
for any $\vep>0$ there exists an integer $N=N(\vep)\geq 1$ such that
the following holds: for every $k\geq 1$, any points $x_1,\dots,
x_k$, and any sequence of positive integers $n_1, \dots, n_k$ and
$p_1, \dots, p_k$ with $p_i \geq N(\vep)$
there exists a point $x$ in $M$ such that
$
\begin{array}{cc}
d(f^j(x),f^j(x_1) ) \leq \vep, 
\end{array}
$
for every $0\leq j \leq n_1$ and
$$
\begin{array}{cc}
d\big(f^{j+n_1+p_1+\dots +n_{i-1}+p_{i-1}}(x) \;,\; f^j(x_i)\big)
        \leq \vep &
\end{array}
$$
for every $2\leq i\leq k$ and $0\leq j\leq n_i$.
Topologically mixing subshifts of finite type are among the class of transformations that
satisfy the specification property. 
Other measure theoretical non-uniform versions of the specification property have been
introduced (see e.g.~\cite{PS05,OT,Va12}). Following, \cite{Va12} we say
that $(f,\mu)$ satisfies the \emph{non-uniform specification property}
if there exists $\de>0$ such that for $\mu$-almost every $x$ and
every $0<\vep<\de$ there exists an integer $p(x,n,\vep)\geq 1$
satisfying
$$
\lim_{\vep\to 0}\limsup_{n \to \infty} \frac{1}{n} p(x,n,\vep)=0
$$
and so that the following holds: given points $x_1, \dots, x_k$ in a full $\mu$-measure set
and positive integers $n_1, \dots, n_k$, if $p_i \geq p(x_i,n_i,\vep)$
then there exists $z$ that $\vep$-shadows the orbits of each $x_i$
during $n_i$ iterates with a time lag of $p(x_i,n_i,\vep)$ in
between $f^{n_i}(x_i)$ and $x_{i+1}$, that is,
$$
z \in B(x_1,n_1,\vep)
    \quad\text{and}\quad
f^{n_1+p_1+\dots +n_{i-1}+p_{i-1}}(z) \in B(x_i,n_i,\vep)
$$
for every $2\leq i\leq k.$ Here
$
B(x,n,\vep):=\{ y\in X \colon d(f^j(x), f^j(y)) <\vep, \; \forall j=0\dots n-1 \}
$
is the usual Bowen ball of length $n$ and size $\vep$ around $x$.

In the context of flows, we say that the flow $(X_t)_{t\in \mathbb R}$ has the  \emph{specification property}
on $\Lambda\subset M$
if for any $\epsilon> 0$ there exists a $T= T(\epsilon)>0$ such that
the following property holds: given any finite colection of intervals $I_i=[a_i, b_i]$ ($i=1\dots m$)
of the real line satisfying $a_{i+1} - b_i\ge T(\epsilon)$ for every $i$ and every map
$P:\bigcup_{I_i \in \tau}I_i \to \Lambda$ such that $X_{t_2}(P(t_1))=X_{t_1}(P(t_2))$ for any $t_1,t_2\in I_i$
there exists  $x\in\Lambda$ so that $d(X_t(x), P(t)) < \epsilon$ for all $t\in \bigcup_i I_i$.

Since the later properties of specification imply on topologically mixing and we need to consider more general
transitive dynamics we were led to introduce the following notions.

\begin{definition}\label{def:gluing1} (Uniform gluing for homeomorphisms)
We say a continuous map $f: M \to M$ on a compact metric space $M$
satisfies the \emph{gluing orbit property} if for any $\vep>0$ there exists an integer $N=N(\vep)\geq 1$
so that for any points $x_1, x_2, \dots, x_k \in M$ and any positive integers $n_1, \dots, n_k$
there are $p_1, \dots, p_k  \le N(\vep)$ and a point $x$ in $M$ so that
$
d(f^j(x),f^j(x_1) ) \leq \vep
$
for every $0\le j \le n_1$ and
$$
\begin{array}{cc}
d\big(f^{j+n_1+p_1+\dots +n_{i-1}+p_{i-1}}(x) \;,\; f^j(x_i)\big)
        \leq \vep &
\end{array}
$$
for every $2\leq i\leq k$ and $0\leq j\leq n_i$.
\end{definition}

As mentioned above Axiom A flows are semi-conjugate to suspension flows over subshifts
of finite type. Consequently, many important ergodic properties including the thermodynamical
formalism of hyperbolic flows can be established using the reduction to the base dynamics
(see e.g. \cite{BR75}). Bowen~\cite{Bow72} characterized the Axiom A
flows that exhibit the specification property, crucial to deduce lower bound estimates for
large deviations using a similar thermodynamical approach to~\cite{You90}, and in particular suspension flows
with a roof function cohomologous to a constant never satisfy the specification property. In other words,
any Axiom A flow whose stable and unstable manifolds are jointly integrable is not topologically mixing,
hence it does not satisfy the specification property (we refer the reader to \cite{Bow72} for more details).
Thus we shall consider also a gluing orbit property for semiflows as follows.

\begin{definition}\label{def:gluing2} (Gluing orbit property for semiflows)
Let $(X_t)_{t\ge 0}$ be a semiflow (not necessarily suspension flow) on a separable metric space $M$.
We say that $(X_t)_t$ has the \emph{gluing orbit property} if for any $\vep>0$ there exists $T(\vep)>0$ so that
for any points $x_1, x_2, \dots, x_k \in M$ and times $t_1, \dots, t_k \ge 0$
there exists $p_1, \dots, p_k  \le T(\vep)$ and $y\in M$ so that
$$
d( X_t(y)), X_t(x_1) ) <\vep
    \quad \forall t \in [0, t_1]
$$
and, if $\underline{x}_i= X_{\sum_{j=0}^{i-1} p_j+ t_j}(y) \in M$ then
$$
d( X_{t}  (\underline{x}_i) , X_t(x_i) ) <\vep
    \quad \forall t \in [0, t_i]
$$
for every $2 \le i \le k$. We say the flow $(X_t)_{t\in\mathbb R}$ satisfies the gluing orbit property
if the semiflows $(X_t)_{t\ge 0}$ and $(X_{-t})_{t\ge 0}$ satisfy this property. We let
$
B(x,t,\vep):=\{ y\in X \colon d(X_s(x), X_s(y)) <\vep, \; \forall s\in [0,t] \}
$
denote the Bowen ball of size $\vep$ and length $t$ around $x$.
\end{definition}


The previous definition roughly means that one can shadow the prescribed pieces of orbits
by a real orbit and that the time length needed from one piece to the following can be bounded
by some time $T(\vep)$ depending only on the proximity $\vep$.
%
Although the gluing orbit property has the flavor of specification, it is not likely that strong
consequences of the later property can be derived under the first much weaker condition.
A first evidence is that under the gluing orbit property the dynamical is not necessarily
topologically mixing. Finally, notice that the gluing orbit property is clearly a topological invariant.

\subsubsection{Tempered distortion and weak Gibbs}

In what follows we recall the notions of observables with tempered distortion and the notion of
weak Gibbs measures for a flow.

\begin{definition}\label{def:tempered}
Let $(X_t)_{t\in \mathbb R}$ be a continuous flow on a metric space $M$.
We say that a continuous observable $\psi : M \rightarrow \R$ has \emph{tempered variation} if
there is $\delta > 0$ such that $\lim_{t \to \infty}\frac{1}{t}\gamma(\psi , t, \delta) = 0$, where
$$
\gamma(\psi , t,\delta ):= \sup_{y \in B(x , t, \delta)} \Big|\int_{0}^{t}\psi(X_{s}(x)) - \psi(X_{s}(y)) \; ds\Big|.
$$
\end{definition}

\begin{definition}
Given a potential $\phi : M \rightarrow \R$ and a probability $\mu$, we say that $\mu$ is \emph{weak Gibbs}
with respect to $\phi$, with constant $P_{\mu} \in \mathbb{R}$, if for any $\vep > 0$ there exists $K_{t}(\vep)$ (depending only of $\vep$ and of the time $t$) so that $\lim_{t \to \infty}\frac{1}{t}\log K_{t}(\vep) = 0$ and
\begin{equation}\label{eq:Gibbss}
\frac{1}{K_{t}(\vep)} \leq \frac{\mu(B(x, t, \vep))}{\exp\Big[\int_{0}^{t}\phi(X_{s}(x))ds - tP_{\mu}\Big]} \leq K_{t}(\vep)
\end{equation}
for every $x \in M$ and $t\in \mathbb R$.
If $\mu$ is $(X_{t})_t$-invariant then $P_{\mu} = h_{\mu} (X_1)+ \int\phi \, d\mu$.
 If the constants $K_t$ can be taken constant independently of
the time $t$ then we say that $\mu$ is a \emph{Gibbs measure}.
\end{definition}

\subsection{Statement of the main results}
\label{subsec:statements}
We are now in a position to state our main results in which we consider three different directions:
(i) relation between the gluing orbit property and uniform hyperbolicity,
(ii) large deviations results for semiflows with the gluing orbit property, and
(iii) criteria for suspension semiflows to satisfy the gluing orbit properties.

\subsubsection{Gluing orbit property from the robust and generic viewpoints}

Our purpose here is to compare the gluing orbit property and the specification property for flows.
Taking into account that $C^1$-robustness of the specification property implies on topologically mixing
and uniformly hyperbolic flows (c.f.~\cite{AST}) one could wonder if the $C^1$-robustness of the gluing orbit
property is equivalent to the latter one. First we relate this notions with uniform hyperbolicity.

\begin{maintheorem}\label{thm:robust}
Let $X \in \mathfrak{X}^1(M)$ be so that there exists a $C^1$-open open neighborhood $\mathcal U \subset \mathfrak{X}^1(M)$ of $X$ so that the flow $(Y_t)_{t\in \mathbb R}$ associated to a vector field
$Y\in \mathcal U$ satisfies the gluing orbit property. Then the vector field $X$ generates a robustly
transitive Anosov flow $(X_t)_{t\in \mathbb R}$.
\end{maintheorem}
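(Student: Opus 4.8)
The plan is to follow the now-standard strategy for deducing uniform hyperbolicity from $C^1$-robust shadowing-type properties, as in \cite{SSY} for diffeomorphisms and \cite{AST,SSY} for flows, but adapted to the weaker gluing orbit hypothesis. The first step is to establish robust transitivity: since every $Y\in\mathcal U$ has the gluing orbit property, and the gluing orbit property trivially implies topological transitivity (take $k=2$, $t_1=t_2=0$ and concatenate single points with a bounded gap), each $(Y_t)_t$ with $Y\in\mathcal U$ is transitive; hence $X$ is $C^1$-robustly transitive. In particular $(X_t)_t$ has no sinks or sources, and by a Kupka--Smale / Franks-type perturbation argument the robust transitivity forces every periodic orbit and every singularity of every $Y\in\mathcal U$ to be hyperbolic (otherwise one creates, by an arbitrarily small $C^1$-perturbation inside $\mathcal U$, a non-hyperbolic critical element that can be turned into an attracting or repelling one, contradicting transitivity).

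The second and central step is to upgrade ``all critical elements are hyperbolic and this is robust'' to uniform hyperbolicity of the whole manifold. Here I would invoke the linear-cocycle/Mañé--Franks machinery: the gluing orbit property lets one approximate any invariant measure, and in particular the periodic measures are dense enough, so that a dominated splitting over the closure of the periodic orbits must be present; robustness then propagates the hyperbolic estimates at periodic orbits, with uniform constants, to a uniformly hyperbolic splitting $TM=E^-\oplus X\oplus E^+$ over all of $M$. Concretely I would: (a) use gluing to show the set of periodic orbits is dense in $M$ (concatenate a point with itself over a long time to approximate, then use transitivity plus a closing-lemma-type argument within $\mathcal U$); (b) show that the $C^1$-robust absence of nonhyperbolic periodic orbits implies a uniform hyperbolicity estimate along periodic orbits (Mañé's argument: a lack of domination or of uniform rates at periodic points can be destroyed by a $C^1$-small perturbation that creates a nonhyperbolic periodic orbit); (c) pass to the limit to obtain a dominated splitting over $M$, then argue that the central (flow) direction is one-dimensional and the extremal bundles are uniformly contracting/expanding, i.e., the flow is Anosov. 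Robust transitivity then gives that $(X_t)_t$ is a \emph{robustly transitive} Anosov flow, as claimed.

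The hardest part will be step two, specifically extracting \emph{uniform} hyperbolic estimates from the merely qualitative ``robustly all critical elements hyperbolic'' statement, and ruling out the sectional-hyperbolic (e.g. Lorenz-like) alternative in which singularities and regular orbits are hyperbolic but the flow is not Anosov because singularities accumulate on regular orbits. The gluing orbit property should be exactly what excludes this: it would let one glue a long piece of a regular orbit passing near a singularity with a piece of a periodic orbit, producing orbits that spend controlled time near the singularity, and a $C^1$-perturbation of such a configuration within $\mathcal U$ would create a homoclinic tangency or a nonhyperbolic periodic orbit — contradicting the robustness of the gluing orbit property. Making this perturbation argument precise, and checking that the gluing times $T(\vep)$ stay bounded under the relevant perturbations so one genuinely stays inside $\mathcal U$, is where the technical weight of the proof lies; everything else (transitivity, no attractors/repellers, density of periodic orbits) is comparatively routine given the earlier definitions.
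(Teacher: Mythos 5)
Your outline has the right overall shape (robust transitivity, hyperbolicity of all critical elements, then star-flow machinery to conclude Anosov), but the pivotal step is wrong as stated. You claim that robust transitivity alone forces every critical element of every $Y\in\mathcal U$ to be hyperbolic, because a non-hyperbolic one could be perturbed into a sink or source. This fails in dimension $\ge 3$: a non-hyperbolic periodic orbit of saddle type generically bifurcates into saddles of different indices, not into attractors or repellers, and there exist $C^1$-robustly transitive systems that are not Anosov and carry non-hyperbolic periodic orbits (Bonatti--D\'iaz type examples and their suspensions). If robust transitivity sufficed, the theorem would assert that robustly transitive flows are Anosov, which is false. The gluing orbit property must therefore be used beyond mere transitivity, and this is exactly what the paper does: for any two hyperbolic critical elements $p,q$ one applies the gluing property to the backward orbit of $p$ and the forward orbit of $q$ with shadowing times $t\to\infty$, extracts a limit point lying in $W^{cs}_\vep(\mathcal O(p))\cap X_{L}(W^{cu}_\vep(\mathcal O(q)))$ with $L\le T(\vep)$, and concludes that these invariant manifolds always intersect (Proposition~\ref{prop:index}). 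Combined with Kupka--Smale genericity (intersections are transverse or empty), this forces all hyperbolic critical elements to have the same stable index and rules out the coexistence of singularities with periodic orbits; Pugh's general density theorem then eliminates singularities altogether, and the bifurcation lemma (a non-hyperbolic periodic orbit can be perturbed to produce two hyperbolic periodic orbits of \emph{different} indices) contradicts the constancy of index, so every $Y\in\mathcal U$ has only hyperbolic periodic orbits, all of one fixed index, and no singularities.

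From that point your steps (b)--(c) are morally what is needed, but note that ``robustly transitive with all periodic orbits hyperbolic of constant index, and no singularities, implies Anosov'' is itself a deep result; the paper invokes Gan--Wen--Zhu (robustly transitive and strongly homogeneous of the same index implies sectional-hyperbolic) and Gan--Wen (nonsingular sectional-hyperbolic implies uniformly hyperbolic) rather than redoing the Ma\~n\'e-type cocycle argument from scratch. Your proposed mechanism for excluding the Lorenz-like alternative (gluing an orbit near a singularity to a periodic orbit and perturbing to create a tangency) is neither needed nor made precise; in the paper the exclusion of singularities comes from the index/intersection argument together with Pugh's density theorem, before any sectional-hyperbolicity enters.
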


The following is a direct consequence of the previous result, the stability of Anosov flows and that
$C^1$-robust specification implies topologically mixing Anosov flows (c.f.~\cite{AST}).

\begin{maincorollary}\label{cor:robust}
Let $X\in \mathfrak{X}^1(M)$. The following are equivalent:
\begin{enumerate}
\item $X$ generates a topologically mixing Anosov flow;
\item $X$ satisfies the $C^1$-robust specification property; and
\item $X$ satisfies $C^1$-robustly both the topologically mixing and gluing orbit properties.
\end{enumerate}
\end{maincorollary}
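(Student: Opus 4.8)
The plan is to establish the cyclic chain of implications $(1) \Rightarrow (2) \Rightarrow (3) \Rightarrow (1)$, invoking Theorem~\ref{thm:robust} together with the structural stability of Anosov flows and the result of Arbieto--Senos--Sodero~\cite{AST} on $C^1$-robust specification.

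First I would prove $(1) \Rightarrow (2)$. Suppose $X$ generates a topologically mixing Anosov flow. By the structural stability of Anosov flows, there is a $C^1$-neighborhood $\mathcal U$ of $X$ such that every $Y \in \mathcal U$ also generates an Anosov flow; moreover, since topological mixing is an open condition within the class of Anosov flows (it is equivalent to the non-joint-integrability of the stable and unstable foliations, which is a $C^1$-open condition, by Plante's alternative / the work of Bowen~\cite{Bow72}), shrinking $\mathcal U$ if necessary we may assume every $Y \in \mathcal U$ generates a topologically mixing Anosov flow. A topologically mixing Anosov flow satisfies the specification property (this is Bowen's characterization in~\cite{Bow72}: an Axiom~A flow satisfies specification on a basic set iff the flow restricted to that set is topologically mixing, and here the basic set is all of $M$). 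Hence every $Y \in \mathcal U$ satisfies specification, i.e. $X$ satisfies the $C^1$-robust specification property.

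Next, $(2) \Rightarrow (3)$ is essentially immediate once we recall two facts. On one hand, the specification property trivially implies the gluing orbit property (the bounded transition times $p_i \le N(\vep)$ required in Definition~\ref{def:gluing2} are exactly a weakening of the fixed transition time $T(\vep)$ of specification, and the shadowing conclusion is identical), so $C^1$-robust specification yields the $C^1$-robust gluing orbit property. On the other hand, $C^1$-robust specification implies that $X$ generates a topologically mixing Anosov flow by~\cite{AST} (indeed this is the analogue for flows of the Sakai--Sumi--Yamamoto result~\cite{SSY}), and topological mixing of an Anosov flow is $C^1$-robust by the same openness argument used above; hence $X$ satisfies $C^1$-robustly both the topologically mixing and the gluing orbit properties.

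Finally, $(3) \Rightarrow (1)$: if $X$ satisfies the $C^1$-robust gluing orbit property, then Theorem~\ref{thm:robust} already gives that $X$ generates a (robustly transitive) Anosov flow. Combining this with the hypothesis that $X$ is also $C^1$-robustly topologically mixing — in particular topologically mixing — we conclude that $X$ generates a topologically mixing Anosov flow, which is statement~(1). The main obstacle in writing this out carefully is not any single implication but rather assembling precisely the right ``openness of topological mixing within the Anosov class'' statement and citing~\cite{Bow72,AST} at exactly the points where their results are used; once Theorem~\ref{thm:robust} is in hand, no genuinely new dynamics is required, and the corollary is a bookkeeping exercise tying together structural stability, Bowen's specification criterion, and the Arbieto--Senos--Sodero theorem.
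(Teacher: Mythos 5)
Your proof is correct and follows exactly the route the paper intends: the paper gives no written proof, only the remark that the corollary follows from Theorem~\ref{thm:robust}, the stability of Anosov flows, and the Arbieto--Senos--Sodero equivalence between $C^1$-robust specification and topologically mixing Anosov flows, which are precisely the three ingredients you assemble (together with the easy observation that specification implies the gluing orbit property by taking all transition times equal to $T(\vep)$). The only point worth flagging is that the ``openness of topological mixing within the Anosov class'' should not be attributed to structural stability alone (orbit equivalence does not preserve mixing); your appeal to the non-joint-integrability characterization, or simply citing~\cite{AST} for the full equivalence $(1)\Leftrightarrow(2)$, is the right way to handle it.
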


In view of Corollary~\ref{cor:robust} it is natural to ask whether every topologically mixing smooth flow
with the gluing orbit property satisfies the specification property. We believe such examples may exist
for topologically mixing flows obtained as suspension of beta maps but we do not
prove or use this fact here.
Finally, following the same lines as in the proof of \cite[Theorem~2.6]{AST} we can
also prove the following:

\begin{maintheorem}\label{thm:generic}
There exists a $C^1$-residual subset $\mathcal R \subset \mathfrak{X}^1(M)$ so that any
vector field $X\in \mathcal R$ satisfying the gluing orbit property generates a transitive Anosov flow.
\end{maintheorem}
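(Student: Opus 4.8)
The plan is to mimic the strategy used for the robust case (Theorem~\ref{thm:robust}) but to replace $C^1$-openness by a Baire-category argument, exactly in the spirit of \cite[Theorem~2.6]{AST}. The key point is that the conclusion of Theorem~\ref{thm:robust} is really a dichotomy at the local level: if a vector field $X$ has the gluing orbit property but does \emph{not} generate an Anosov flow, then $X$ must lie on the boundary of the set of vector fields for which uniform hyperbolicity (equivalently, some suitable robust structural obstruction) fails; a generic vector field cannot sit on such a boundary. Concretely, I would first isolate from the proof of Theorem~\ref{thm:robust} the mechanism that produces, from the failure of hyperbolicity together with a Franks-type or Pugh-type $C^1$ perturbation, a nearby vector field violating the gluing orbit property (for instance by creating two distinct periodic orbits of incompatible periods, or a sink/source, that obstruct the required shadowing with bounded gap time $T(\vep)$). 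This mechanism is $C^1$-local and quantitative in $\vep$.

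Next I would build the residual set. Let $\mathcal{HYP}\subset\mathfrak X^1(M)$ denote the (open) set of Anosov vector fields, and let $\mathcal N$ be its $C^1$-interior complement, i.e. $\mathcal N = \mathfrak X^1(M)\setminus \overline{\mathcal{HYP}}$, together with $\partial\,\mathcal{HYP}$. The set $\mathcal R := \mathcal{HYP}\,\cup\,\operatorname{int}(\mathfrak X^1(M)\setminus\overline{\mathcal{HYP}})$ is open and dense, hence residual, by general topology (the union of an open set with the interior of the complement of its closure is always open and dense). I claim any $X\in\mathcal R$ with the gluing orbit property is Anosov: if not, then $X\in\operatorname{int}(\mathfrak X^1(M)\setminus\overline{\mathcal{HYP}})$, so there is a whole $C^1$-neighborhood $\mathcal U$ of $X$ containing \emph{no} Anosov vector field. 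One now needs that the gluing orbit property is, in this situation, inherited by a dense (or at least residual, or simply nonempty) subset of $\mathcal U$, or alternatively that $X$ itself then fails the gluing orbit property by the perturbation mechanism above — the latter is cleaner: apply the $C^1$-local construction isolated in the first step directly to $X$ (the perturbation can be made arbitrarily $C^1$-small, staying inside $\mathcal U$, and it is the target vector field that need not have the property; but we only need that the \emph{obstruction it exhibits} — e.g. a hyperbolic periodic orbit whose homoclinic/heteroclinic data forces a lower bound $T(\vep)\to\infty$ — can be detected already in $X$ by a limiting argument, or that $X$ can be $C^0$-approximated along the orbit structure).

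The cleanest route, and the one I would actually write, is: since $\mathcal U$ contains no Anosov flow, by Theorem~\ref{thm:robust} (contrapositive, applied to $X$ with the neighborhood $\mathcal U$) $X$ cannot have the gluing orbit property \emph{robustly}; combine this with a genericity upgrade showing that on the residual set $\mathcal R$ the gluing orbit property, when it holds, holds robustly on a neighborhood. This last upgrade is where I expect the main obstacle to be: the gluing orbit property is a priori not a $C^1$-open condition, so one must argue that on the ``good'' part of $\mathcal R$ (away from $\overline{\mathcal{HYP}}$) any vector field with the property can be $C^1$-approximated by ones still having it, or invoke the \cite{AST} technique of passing to periodic-orbit data (Franks' lemma, Hayashi's connecting lemma for flows) to realize all the necessary homoclinic configurations by perturbation — these are the steps that require care for flows because of the time-reparametrization ambiguity and the presence of the flow direction in the splitting. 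Once that upgrade is in place, $X\in\mathcal R$ with the gluing orbit property lies in the interior of vector fields with the property, hence by Theorem~\ref{thm:robust} generates a robustly transitive Anosov flow, in particular a transitive Anosov flow, completing the proof.
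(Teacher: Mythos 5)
Your argument has a genuine gap, and it sits exactly where you say you ``expect the main obstacle to be'': the claimed \emph{genericity upgrade} that on a residual set the gluing orbit property, when it holds, holds $C^1$-robustly. You never prove this, and there is no general Baire-category principle that promotes a non-open dynamical property to a robust one on a residual set. Without that upgrade, the contrapositive of Theorem~\ref{thm:robust} is useless here: knowing that every $C^1$-neighborhood of $X$ contains some vector field \emph{without} the gluing orbit property says nothing about $X$ itself, which is the vector field you assumed to have the property. Your fallback suggestions (detecting the obstruction in $X$ ``by a limiting argument'', or $C^0$-approximating along the orbit structure) are not worked out and do not obviously close the gap. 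The choice of residual set $\mathcal{HYP}\cup\bigl(\mathfrak X^1(M)\setminus\overline{\mathcal{HYP}}\bigr)$ is fine as far as it goes, but it reduces the theorem to precisely the unproved upgrade.

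The paper's proof avoids robustness of the gluing orbit property altogether. The decisive point is that Proposition~\ref{prop:index} --- the nonempty intersections $W^{cs}_\vep(\mathcal{O}(p)) \cap X_{L}(W^{cu}_\vep (\mathcal{O}(q)))\neq \emptyset$ for hyperbolic critical elements $p,q$ --- is extracted from the gluing orbit property of $X$ \emph{itself}, not of nearby vector fields. One then takes $\mathcal R$ to be the intersection of the Kupka--Smale residual set, the residual set from Pugh's general density theorem, and the residual set $\mathcal R_3$ of Lemma~\ref{le:persistper}. For $X\in\mathcal R$ with the gluing orbit property: Kupka--Smale genericity forces all hyperbolic critical elements to have the same index (otherwise the invariant manifolds above could not intersect), which together with Pugh's density theorem rules out singularities; and if $X$ failed to be a star flow, Lemma~\ref{le:bifurca} would produce vector fields $C^1$-close to $X$ with two hyperbolic periodic orbits of different indices, which Lemma~\ref{le:persistper} transfers back to $X$, contradicting index constancy. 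A transitive nonsingular star flow is Anosov. Note the logic: the $C^1$-perturbations are used only to contradict properties that $X$ already enjoys by virtue of its own gluing orbit property; at no point does one need any nearby vector field to have the gluing orbit property. This is the ingredient your proposal is missing.
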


\subsubsection{Large deviations principles}

In what follows we will be mostly interested in obtaining lower
bound large deviation estimates for semiflows with the gluing orbit property, a problem that revealed
difficulties even for uniformly hyperbolic flows.
Indeed, although a large deviations principle holds for \emph{continuous observables} and Axiom A diffeomorphisms
using the specification property (see e.g. \cite{You90}) a counterpart for flows does not follow immediately for Axiom A flows since  typically the strategy for lower bound estimates involve some specification property
which occurs only among topologically mixing dynamics.
In the mid nineties, Waddington~\cite{Wad96} obtained, among other limit theorems, a large deviations
principle for weakly topologically mixing Anosov flows.
Here we prove
a level-1 large deviations principle for every basic piece for an \emph{Axiom A flow} and any
\emph{continuous} observable, which is a consequence of the following theorem and the existence of the semiconjugacy to
symbolic dynamics obtained in \cite{BR75}.
Before stating it precisely just recall the topological pressure of the flow $(X_t)_t$ with respect to the potential $\phi$
is defined by
\begin{equation}\label{eq:equilibrium}
\Ptop((X_{t})_{t} , \phi)
	:= \sup_{\mu \in \mathcal M_{X_1}} \big\{ h_\mu(X_1) +\int \phi \, d\mu\big\}
\end{equation}
and an equilibrium state $\mu_\phi$ for $(X_t)_t$ with respect to the potential $\phi$ is a probability measure
that attains the supremum.

\begin{maintheorem}\label{thm:Wad}
Let $\sigma: \Sigma \to \Sigma$ be a subshift of finite type, $\rho :\Sigma \rightarrow \R$ be a H\"older continuous roof function
and $(X_{t})_{t}$ be the suspension flow associated to $\sigma$ and $\rho$. Let
$\phi: \Sigma_{\rho}\to \mathbb R$ be a continuous
potential so that $\mu_{\phi}$ is an unique equilibrium state for $(X_{t})_{t}$ with respect to $\phi$
and is a Gibbs measure.
For any \emph{continuous} observable $\psi: \Sigma_{\rho}\to\mathbb R$
it holds that
\begin{align*}
\limsup_{t\to +\infty} \frac1t \log \mu_\phi \Big( x\in \Sigma_{\rho} : \frac{1}{t}\int_{0}^{t} \psi(X_s(x)) \, ds \in [a , b] \Big)
    \le -\inf_{s \in [a , b]} I(s)
\end{align*}
and
\begin{align*}
\liminf_{t\to +\infty} \frac1t \log \mu_\phi \Big( x\in \Sigma_{\rho} : \frac{1}{t}\int_{0}^{t} \psi(X_s(x)) \, ds \in (a , b) \Big)
    \ge -\inf_{s \in (a , b)} I(s)
\end{align*}
where $I(s)=\sup\{ \Ptop((X_{t})_{t} , \phi) - \frac{h_\eta(\sigma)}{\int\rho d\eta} - \frac{\int \overline{\phi} \, d\eta}{\int\rho d\eta} : \eta \in \mathcal M_\sigma
 \,\&\, \frac{\int \overline{\psi} \,d\eta}{\int\rho d\eta} = s \}$ is the rate function and $\mathcal M_\sigma$ denotes the space of $\sigma$-invariant probability measures.
In particular, if $\bar \psi$ is not cohomologous to constant (meaning $\cM_\sigma \ni \eta \mapsto \int \bar \psi d\eta$ is not
a constant function) and the interval $[a , b]$
 does not contain $\int \psi \, d\mu_{\phi}$ then the right hand sides above are strictly negative.
\end{maintheorem}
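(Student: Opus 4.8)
The plan is to transfer the problem to the base subshift, apply L.-S.\ Young's thermodynamical approach there, and then translate back via the standard suspension-flow dictionary. First I would reduce the continuous-time Birkhoff average of $\psi$ over the suspension flow to a Birkhoff average over $\sigma$ of the reduced observable $\overline\psi$. Recall that for $(x,0)\in\Sigma_\rho$ and $t = S_n\rho(x) + r$ with $0\le r<\rho(\sigma^n x)$ one has $\int_0^t \psi(X_s(x,0))\,ds = S_n\overline\psi(x) + O(\|\psi\|_\infty \|\rho\|_\infty)$, so that
\begin{equation*}
\frac1t\int_0^t \psi(X_s(x,0))\,ds \;=\; \frac{S_n\overline\psi(x)}{S_n\rho(x)} \;+\; o(1)
\end{equation*}
uniformly in $x$ as $t\to\infty$. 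Since $\rho$ is Hölder and bounded away from zero, the Abramov relation $h_{\overline\eta}(X_1) = h_\eta(\sigma)/\int\rho\,d\eta$ and the identity $\int\psi\,d\overline\eta = \int\overline\psi\,d\eta / \int\rho\,d\eta$ let me rewrite the rate function $I$ purely in terms of flow-invariant measures: $I(s) = \Ptop((X_t)_t,\phi) - \sup\{h_{\overline\eta}(X_1) + \int\phi\,d\overline\eta : \int\psi\,d\overline\eta = s\}$, which is the expected Donsker--Varadhan form.

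Next I would establish the two bounds. For the upper bound, I partition the set $\{\frac1t\int_0^t\psi\in[a,b]\}$ according to the symbolic cylinder the point's base projection lies in, use that $\mu_\phi$ is Gibbs for $\phi$ with respect to the flow (hence, passing to the base, that the projection of $\mu_\phi$ to $\Sigma$ is weak Gibbs for the reduced potential $\overline\phi - \Ptop\cdot\rho$ up to the tempered-variation error coming from $\psi$, $\phi$, $\rho$ being merely continuous/Hölder — this is exactly where the Appendix on tempered variation for reduced observables is invoked), and then bound the measure of each cylinder by $\exp[S_n\overline\phi(x) - n\,\Ptop\,\text{(normalized)} + o(n)]$. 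Summing and taking $\frac1t\log$, the combinatorial count of cylinders compatible with the constraint $S_n\overline\psi/S_n\rho \approx s$ is controlled by the pressure of $\overline\phi$ restricted to that level set, which is a standard variational/convexity computation giving $-\inf_{[a,b]} I$. For the lower bound, I fix $s\in(a,b)$ and an invariant measure $\eta$ nearly attaining the supremum in $I(s)$; by the variational principle and the Gibbs property one can find, for large $n$, a large (in $\mu_\phi$-measure $\gtrsim e^{-nI(s)+o(n)}$) family of cylinders whose points have $S_n\overline\psi/S_n\rho$ within $\epsilon$ of $s$ — here one uses that equilibrium/Gibbs states have positive entropy on such level sets together with Birkhoff's theorem along $\eta$; the time-change then places these inside the flow event $\{\frac1t\int_0^t\psi\in(a,b)\}$ for $t$ in a range of length comparable to $\|\rho\|_\infty$, and one integrates over $t$.

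The main obstacle I anticipate is the lack of regularity of $\psi$ (and of $\phi$): the classical Kifer--Waddington arguments lean on Hölder continuity to get genuine (not just sub-exponential) distortion control, so here I must replace every such estimate by a tempered-variation estimate. Concretely, the Gibbs property of $\mu_\phi$ at the flow level only gives $\mu_\phi(B(x,t,\epsilon)) = \exp[\int_0^t\phi(X_s x)\,ds - tP_{\mu_\phi} + o(t)]$, and to run the cylinder-counting I need the analogous weak-Gibbs statement for the base measure with potential $\overline\phi$; showing that the $o(t)$ errors — arising from $\psi$, $\phi$ having tempered variation, and from the Bowen--Walters versus symbolic metric comparison — do not accumulate faster than sub-exponentially is the technical heart. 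Once that is in place, the passage to the rate function, the reduction of $(a,b)$-level sets, and the strict negativity conclusion (which follows because $\eta\mapsto\int\psi\,d\overline\eta$ being non-constant forces $I(\int\psi\,d\mu_\phi)=0$ to be an isolated zero by upper semicontinuity of entropy and compactness of $\mathcal M_\sigma$) are routine.
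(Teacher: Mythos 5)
Your route---push everything down to the base subshift, run Young's cylinder-counting argument there, and translate back via Abramov---is genuinely different from the paper's. The paper goes the opposite way: it first establishes the gluing orbit property for the \emph{suspension flow itself} (Theorem~\ref{thm:gluingAA} applied to a transitive SFT base, using Example~\ref{ex:1} and the bounded distortion of the H\"older roof), and then invokes the general flow-level large deviations result (Theorem~\ref{thm:LB}, parts (i) and (ii)) verbatim, so that the only remaining work is rewriting the rate function via the Abramov formulas. Your reduction to the base forces you to transfer the Gibbs property of $\mu_\phi$ \emph{downward} to a weak Gibbs property on cylinders for $\overline\phi - P\rho$ (the Appendix only proves the upward direction, though the Barreira--Saussol ball comparison makes the reverse feasible), and to handle a level-set constraint that is a \emph{ratio} $S_n\overline\psi/S_n\rho\approx s$ rather than a plain Birkhoff average; both are surmountable but are overhead the paper's route avoids entirely.

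There is, however, a genuine gap in your lower bound. You fix a near-optimal invariant measure $\eta$ and invoke ``Birkhoff's theorem along $\eta$'' to produce many cylinders with $S_n\overline\psi/S_n\rho$ near $s$. This only works when $\eta$ is ergodic; for a non-ergodic $\eta$ (and the supremum in $I(s)$ is over \emph{all} invariant measures with the ratio constraint, a constraint not preserved by ergodic decomposition), Birkhoff averages converge to the conditional expectations of the ergodic components, not to $\int\overline\psi\,d\eta/\int\rho\,d\eta$, and the level set you need may be $\eta$-null. The classical fix---decompose $\eta$ into finitely many ergodic pieces (Lemma~\ref{lemAB}), apply Katok's entropy formula to each, and \emph{concatenate} the resulting orbit segments---requires a specification-like property, and this is precisely where your appeal to ``Young's thermodynamical approach'' breaks down: the SFT in the theorem is only transitive, not topologically mixing (indeed, by Bowen's theorem the interesting Axiom~A flows are exactly the non-mixing ones), so the specification property that Young's argument leans on is unavailable on the base. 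The substitute is the gluing orbit property, with gap times merely bounded by $N(\epsilon)$ rather than equal to a prescribed $p\geq N(\epsilon)$, and verifying that the concatenation still lands the time-changed orbit inside the flow event $\{\frac1t\int_0^t\psi\in(a,b)\}$ is the content of Lemma~\ref{lem5} and the surrounding estimates in Subsection~\ref{1lb}. Your proposal never mentions either specification or the gluing orbit property, so the one step that cannot be done by soft arguments is missing.
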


Let us stress that large deviations lower bounds are much harder to obtain in virtue of the fact that points that are not
fastly converging to the mean can generate invariant measures that are not ergodic. It is at this point that some
specification-like property is needed. 
The following result strenghs the proof of usual large deviations lower bounds requiring only the gluing orbit property.

\begin{maintheorem}\label{thm:LB}
Let $M$ be a metric space and $(X_{t})_{t}$ be a semiflow satisfying the gluing orbit property.
Assume $\phi : M \rightarrow \R$ is a bounded potential with tempered variation, $\mu$ is a weak Gibbs probability
with respect the $X_{t}$ and $\phi$ with constant $P=P_{\mu}$. Given real numbers $a\le b$:
 \begin{itemize}
 \item[i)] if $\psi : M \rightarrow \R$ is a bounded observable with tempered variation
then
 \begin{align*}
 \liminf_{t\to \infty}
 	& \frac{1}{t}\log \mu\Big(x \in M : \frac{1}{t}\int_{0}^{t}\psi(X_{s}(x))ds \in (a , b) \Big) \\
	 & \geq
 	- \inf\Big\{P_\mu - h_{\nu}(X_1) - \int\phi \, d\nu  : \nu \text{ is } X_{1}\text{-invariant and }
	 \int \psi \, d\nu \in (a , b) \Big\} \\
	 & \geq
	- \inf\Big\{P_\mu - h_{\nu}(X_1) - \int\phi \, d\nu  : \nu \text{ is } (X_{t})_t\text{-invariant and }
	 \int \psi \, d\nu \in (a , b) \Big\}
 \end{align*}
\item[ii)] if $M$ is compact and $\psi : M \rightarrow \R$ is continuous
then
 \begin{align*}
 \limsup_{t\to \infty}
 	& \frac{1}{t}\log \mu\Big(x \in M : \frac{1}{t}\int_{0}^{t}\psi(X_{s}(x))ds \in [a , b] \Big) \\
	 & \le
 	- \inf\Big\{P_\mu - h_{\nu}(X_1) - \int\phi \, d\nu  : \nu \text{ is } (X_{t})_t\text{-invariant and }
	 \int \psi \, d\nu \in [a , b] \Big\}.
 \end{align*}
\end{itemize}
\end{maintheorem}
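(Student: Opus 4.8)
The plan is to prove the two bounds by different mechanisms: the upper bound (ii) comes from a covering estimate together with the weak Gibbs property, while the lower bound (i), which is the crux, rests on the gluing orbit property. For (ii), set $F_t:=\{x\in M:\tfrac1t\int_0^t\psi(X_s(x))\,ds\in[a,b]\}$ and let $E_t\subset F_t$ be a maximal $(t,\vep)$-separated set, so that $F_t\subset\bigcup_{x\in E_t}B(x,t,\vep)$. The upper weak Gibbs inequality gives
\[
\mu(F_t)\le\sum_{x\in E_t}\mu(B(x,t,\vep))\le K_t(\vep)\,e^{-tP_\mu}\sum_{x\in E_t}\exp\Big(\int_0^t\phi(X_s(x))\,ds\Big),
\]
so it remains to check that $\limsup_t\tfrac1t\log\sum_{x\in E_t}\exp(\int_0^t\phi(X_s(x))\,ds)\le\sup\{h_\nu(X_1)+\int\phi\,d\nu:\nu\text{ is }(X_t)_t\text{-invariant},\ \int\psi\,d\nu\in[a,b]\}$. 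This is a Misiurewicz-type estimate: along a subsequence realizing the $\limsup$, weight the empirical measures $\tfrac1t\int_0^t\delta_{X_s(x)}\,ds$ by $e^{\int_0^t\phi(X_s(x))\,ds}$, normalize, and pass to a weak-$*$ accumulation point $\nu$, which is $(X_t)_t$-invariant; since every $x\in E_t$ has $\int\psi\,d(\text{its empirical measure})\in[a,b]$ and $\psi$ is continuous on the compact space $M$, the limit satisfies $\int\psi\,d\nu\in[a,b]$, while the usual separated-set entropy bound — transferred to the time-one map via $\int_0^t\phi(X_s(\cdot))\,ds=\sum_{j<\lfloor t\rfloor}\Phi(X_1^j(\cdot))+O(\sup|\phi|)$ with $\Phi:=\int_0^1\phi(X_s(\cdot))\,ds$ — bounds the exponential growth rate by $h_\nu(X_1)+\int\phi\,d\nu$. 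Since $\tfrac1t\log K_t(\vep)\to0$, letting $\vep\to0$ yields (ii).

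For (i), fix an admissible $\nu$; it suffices to prove $\liminf\tfrac1t\log\mu(\cdots)\ge h_\nu(X_1)+\int\phi\,d\nu-P_\mu$ and then take the infimum over such $\nu$. Using the ergodic decomposition and the affinity of $\nu\mapsto h_\nu(X_1)$, $\nu\mapsto\int\psi\,d\nu$ and $\nu\mapsto\int\phi\,d\nu$, given $\eta>0$ choose ergodic measures $\nu_1,\dots,\nu_k$ and weights $\lambda_i>0$ with $\sum_i\lambda_i=1$, $\sum_i\lambda_ih_{\nu_i}(X_1)\ge h_\nu(X_1)-\eta$, $|\sum_i\lambda_i\int\psi\,d\nu_i-\int\psi\,d\nu|<\eta$ and $|\sum_i\lambda_i\int\phi\,d\nu_i-\int\phi\,d\nu|<\eta$, shrinking $\eta$ so that $\int\psi\,d\nu\pm3\eta\in(a,b)$. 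Fix $\vep>0$ small enough that the tempered-variation modulus $\gamma(\psi,\cdot,\vep)$ is controlled, and let $T=T(\vep)$ be the gap bound from the gluing orbit property. By Birkhoff's theorem applied to $X_1$ and to $\int_0^1\psi(X_s(\cdot))\,ds$ and $\int_0^1\phi(X_s(\cdot))\,ds$, by Katok's entropy formula for $\nu_i$, and by Egorov's theorem, for every large $t$ there is a $(\lambda_it,\vep)$-separated set $E_i$ whose points $y$ satisfy $\big|\tfrac1{\lambda_it}\int_0^{\lambda_it}\psi(X_s(y))\,ds-\int\psi\,d\nu_i\big|<\eta$, the analogous estimate for $\phi$, and $\#E_i\ge e^{\lambda_it(h_{\nu_i}(X_1)-\eta)}$.

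Now glue. By the gluing orbit property, each tuple $(y_1,\dots,y_k)\in E_1\times\cdots\times E_k$ gives gaps $p_i\le T$ and a point $z=z(y_1,\dots,y_k)$ whose orbit, over $[0,T_t]$ with $T_t=\sum_i(\lambda_it+p_i)$ (so $T_t/t\to1$), $\vep$-shadows in succession the orbit segments of $y_1,\dots,y_k$ of lengths $\lambda_1t,\dots,\lambda_kt$. Three points must be verified: (1) distinct tuples produce $(T_t,\vep/2)$-separated points, since two such tuples differ in some block $i$ in which $y_i\ne y_i'$ are already $(\lambda_it,\vep)$-separated; (2) comparing the integral of $\psi$ over the $i$-th block of $z$ with that over $y_i$ costs at most $\gamma(\psi,\lambda_it,\vep)=o(t)$, and the $k$ gaps contribute at most $kT\sup|\psi|=O(1)$, so $\tfrac1{T_t}\int_0^{T_t}\psi(X_s(z))\,ds=\sum_i\lambda_i\int\psi\,d\nu_i+o(1)\in(a,b)$ for large $t$, and the identical computation gives $\int_0^{T_t}\phi(X_s(z))\,ds\ge T_t(\int\phi\,d\nu-3\eta)$; moreover, again by tempered variation, for large $t$ the entire ball $B(z,T_t,\vep/4)$ lies in $\{x:\tfrac1{T_t}\int_0^{T_t}\psi(X_s(x))\,ds\in(a,b)\}$; (3) the balls $B(z,T_t,\vep/4)$ over distinct tuples are pairwise disjoint. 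Summing the lower weak Gibbs inequality over all $\prod_i\#E_i$ tuples,
\[
\mu\Big(\bigcup_z B(z,T_t,\vep/4)\Big)=\sum_z\mu\big(B(z,T_t,\vep/4)\big)\ge\frac{1}{K_{T_t}(\vep/4)}\,e^{t\sum_i\lambda_i(h_{\nu_i}(X_1)-\eta)}\,e^{T_t(\int\phi\,d\nu-3\eta-P_\mu)}.
\]
Passing to $\tfrac1{T_t}\log$, letting $t\to\infty$ (hence $T_t/t\to1$ and $\tfrac1{T_t}\log K_{T_t}\to0$), then $\eta\to0$, then $\vep\to0$, gives $\liminf\tfrac1t\log\mu(\cdots)\ge h_\nu(X_1)+\int\phi\,d\nu-P_\mu$; the $(X_t)_t$-invariant bound is the direct output, while the stronger $X_1$-invariant bound follows by running the same scheme along integer-length orbit segments that are $X_1$-generic for the $\nu_i$, rounding each gap up to an integer and absorbing the resulting bounded per-block time shift as an $o(t)$ error.

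The main obstacle is the lower-bound construction just described: one must keep simultaneous control of the separation of the glued points, the inclusion of their Bowen balls in the level set $\{x:\tfrac1{T_t}\int_0^{T_t}\psi(X_s(x))\,ds\in(a,b)\}$, and the $\phi$-sum estimate, all while managing the interlocking error parameters $\eta$, $\vep$, the gap bound $T(\vep)$, the tempered-variation modulus $\gamma(\psi,\cdot,\vep)$, and the weak Gibbs constants $K_{T_t}(\vep)$. A secondary difficulty is the reduction of a general invariant measure to a finite convex combination of ergodic ones with entropy controlled from below — which uses the affinity of the entropy together with a measurable selection of generic points — and, since $M$ is only assumed to be a metric space in (i), one needs an extra tightness input so that Katok's formula and the empirical-measure arguments remain valid.
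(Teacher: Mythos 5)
Your proposal is correct and follows essentially the same route as the paper: the upper bound via a maximal separated set, the weak Gibbs property and the Misiurewicz--Young weighted empirical-measure argument (with averaging over $[0,1]$ to pass between $X_1$-invariance and flow invariance), and the lower bound via the ergodic-decomposition approximation, Birkhoff/Katok separated sets of generic points, gluing of tuples, tempered variation to keep the glued Bowen balls inside the deviation set, and the weak Gibbs lower estimate. Your closing caveats (the disjointness of the glued Bowen balls and the use of Katok's formula when $M$ is only a metric space) are handled at the same level of detail in the paper itself, so nothing further is needed.
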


In fact we can obtain lower bounds for the velocity of convergence of empirical measures to open sets in
the space of all probability measures. More precisely,
\begin{maintheorem}\label{thm:LB2}
Let $(X_{t})_{t}$ be a semiflow on a compact metric space $M$ having the gluing orbit property, $\phi : M \rightarrow \R$
be a bounded potential with tempered variation and $\mu$ be a weak Gibbs probability for $X_{t}$ with respect to $\phi$ with constant $P=P_{\mu}$. If $\psi : M \rightarrow \R$ is a bounded observable with tempered variation then
 \begin{align*}
 \liminf_{t\to \infty}
 	& \frac{1}{t}\log \mu\Big(x \in M : \frac{1}{t}\int_{0}^{t}\delta_{X_{s}(x)}ds \in V \Big) \\
	 & \geq
 	- \inf\Big\{P_\mu - h_{\nu}(X_1) - \int\phi \, d\nu  : \nu \text{ is } X_{1}\text{-invariant and }
	 \nu \in V \Big\}
 \end{align*}
for any open set $V$ in the space of probability measures on $M$.
\end{maintheorem}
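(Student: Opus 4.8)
The plan is to mimic the classical thermodynamic lower-bound argument of \cite{You90}, but adapted to the flow setting with the gluing orbit property replacing specification, and with the weak Gibbs property of $\mu$ providing the measure estimates. Fix an open set $V$ in the space of probability measures on $M$ and an $X_1$-invariant measure $\nu \in V$. It suffices to show
\begin{equation*}
\liminf_{t\to\infty} \frac{1}{t}\log \mu\Big(x : \frac{1}{t}\int_0^t \delta_{X_s(x)}\,ds \in V\Big) \geq -\big(P_\mu - h_\nu(X_1) - \textstyle\int\phi\,d\nu\big),
\end{equation*}
since the infimum over such $\nu$ then yields the claim. By the ergodic decomposition and affinity of entropy it is enough to treat ergodic $\nu$ (an ergodic component can be chosen still in $V$ with entropy close to the average, using upper semicontinuity arguments and the openness of $V$; this is a standard reduction).

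\textbf{Key steps.} First I would fix a weak-$*$ neighbourhood basis element around $\nu$: choose continuous functions $g_1,\dots,g_m$ and $\gamma>0$ so that $W = \{\eta : |\int g_j\,d\eta - \int g_j\,d\nu| < \gamma \text{ for all } j\} \subseteq V$. Second, using the Birkhoff ergodic theorem for the time-one map $(X_1,\nu)$ applied to the functions $t\mapsto \int_0^1 g_j(X_s(\cdot))\,ds$, together with the Katok entropy formula / Brin--Katok local entropy theorem for $X_1$, I would fix a small $\varepsilon>0$ and find, for all large $n\in\NN$, a set $\Gamma_n$ of $(n,\varepsilon)$-separated points (for the flow, i.e. $(\lfloor n\rfloor,\varepsilon)$-separated for $X_1$) such that (a) $\#\Gamma_n \geq e^{n(h_\nu(X_1)-\varepsilon)}$ and (b) every $z\in\Gamma_n$ has its empirical measure $\frac1n\int_0^n \delta_{X_s(z)}\,ds$ in $W$ (this uses that $\nu$-typical points equidistribute). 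Third, I would glue: applying the gluing orbit property with accuracy $\varepsilon$, let $T=T(\varepsilon)$; for each large $t$ write $t\approx kn$ and concatenate $k$ copies of (tuples of) points from $\Gamma_n$, obtaining for each choice of such a $k$-tuple a point $y$ whose orbit $\varepsilon$-shadows, over a total time $\approx k(n+T)$, the concatenated blocks. Crucially, the tempered-variation hypotheses on $\phi$ and $\psi$ (here the relevant ``$\psi$" is the finite family $g_j$, or we use that $V$ is open) guarantee that along each Bowen ball the Birkhoff integrals of $g_j$ and of $\phi$ change by $o(t)$, so the shadowing point's empirical measure is still in $V=W$ for $t$ large, and $\int_0^t \phi(X_s(y))\,ds$ is controlled. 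Fourth, I would count: distinct $k$-tuples from $\Gamma_n$ give $(\cdot,\varepsilon/2)$-separated shadowing points $y$ (shadowing plus the original separation), so there are at least $(\#\Gamma_n)^k \geq e^{k n(h_\nu(X_1)-\varepsilon)} \approx e^{t(h_\nu(X_1)-\varepsilon)}$ such points $y$, each lying in the event $\{x : \frac1t\int_0^t\delta_{X_s(x)}\,ds \in V\}$ up to the time rescaling by the factor $n/(n+T)$ (made negligible by taking $n\to\infty$ after $t\to\infty$). Fifth, I would apply the weak Gibbs inequality \eqref{eq:Gibbss}: the Bowen balls $B(y, t, \varepsilon/2)$ are pairwise disjoint by separation, each has $\mu$-measure at least $K_t(\varepsilon/2)^{-1} \exp[\int_0^t \phi(X_s(y))\,ds - tP_\mu] \geq K_t(\varepsilon/2)^{-1} e^{t(\int\phi\,d\nu - P_\mu - O(\varepsilon))}$, using the tempered variation of $\phi$ to replace $\frac1t\int_0^t\phi(X_s(y))\,ds$ by $\int\phi\,d\nu \pm O(\varepsilon)$. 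Summing,
\begin{equation*}
\mu\Big(x : \tfrac1t\textstyle\int_0^t\delta_{X_s(x)}\,ds \in V\Big) \;\geq\; K_t(\varepsilon/2)^{-1}\, e^{t(h_\nu(X_1) - P_\mu + \int\phi\,d\nu - O(\varepsilon))},
\end{equation*}
and since $\frac1t\log K_t(\varepsilon/2)\to 0$, taking $\frac1t\log$, then $\liminf_{t\to\infty}$, then $\varepsilon\to 0$ gives the bound.

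\textbf{Main obstacle.} The delicate point is the interplay of the three scales: the gluing time lag $T(\varepsilon)$ inserts ``dead time'' between blocks, so a shadowing point's empirical measure over $[0,t]$ is a convex combination of the block empirical measures (each in $W$) and of the measures contributed by the $k$ connecting intervals of total length $\approx kT(\varepsilon)$, which is a definite fraction $T/(n+T)$ of $t$. To push this fraction to zero one must let $n\to\infty$, but then the separated set $\Gamma_n$ and the equidistribution/entropy estimates must be uniform enough; the clean way is to prove the bound with an error depending on $\varepsilon$ and $n$, take $\liminf_{t\to\infty}$ for fixed $(\varepsilon,n)$, then let $n\to\infty$ and finally $\varepsilon\to 0$. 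A second, more technical obstacle is that $\Gamma_n$ is separated and equidistributing for the \emph{time-one map} $X_1$ while we need to shadow \emph{flow} orbit segments and control \emph{flow} Birkhoff integrals; here the tempered variation of $\phi$ (and the uniform continuity needed to pass from $X_1$-Bowen balls to flow Bowen balls up to a fixed comparison of scales, as in the standard relation between $B(x,n,\varepsilon)$ for $X_1$ and for the flow) is exactly what bridges the gap, and this is also why the hypothesis is stated for $X_1$-invariant $\nu$ rather than flow-invariant $\nu$. Everything else is a routine adaptation of the discrete-time specification argument.
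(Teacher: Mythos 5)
The proposal has a genuine gap at its very first step: the reduction to ergodic $\nu$. The claim that ``an ergodic component can be chosen still in $V$ with entropy close to the average'' is false in general. The ergodic decomposition only tells you that $\nu$ is an average of ergodic measures; none of those ergodic measures need lie in $V$. A concrete obstruction: if $\nu=\frac12\delta_p+\frac12\delta_q$ for two distinct fixed points $p,q$ and $V$ is a small ball around $\nu$, then neither ergodic component $\delta_p$ nor $\delta_q$ belongs to $V$, yet the theorem must still produce orbits whose empirical measures land in $V$ --- necessarily orbits that split their time between neighbourhoods of $p$ and of $q$. Salvaging your reduction would require entropy-density of ergodic measures in the space of invariant measures, which is itself a nontrivial theorem whose proof (for systems with the gluing orbit property) uses exactly the block-gluing construction you are trying to bypass.

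The paper's proof repairs precisely this point. Instead of a single ergodic measure it approximates $\nu$ by a finite convex combination $\hat\nu=\sum_{i=1}^n a_i\nu_i$ of ergodic measures with $\tilde d(\nu,\hat\nu)<\vep_0$ and $h_\nu(X_1)\le\sum_i a_i h_{\nu_i}(X_1)+\vep_0$ (Lemma~\ref{lemAB2}), takes for each $i$ a $(\lfloor a_i t\rfloor,\vep_2)$-separated set of $\nu_i$-typical points, and glues \emph{one block per component}, of length $\lfloor a_i t\rfloor$ proportional to the weight $a_i$. By convexity and translation invariance of $\tilde d(\cdot,\nu)$ the glued orbit's empirical measure is close to $\sum_i a_i\nu_i$, hence in $V$. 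Your remaining steps (separation and counting, the weak Gibbs estimate on disjoint Bowen balls, tempered variation of $\phi$, and discarding $K_t$) are correct and coincide with the paper's. Note also that the paper's scheme dissolves the ``main obstacle'' you identify: since only $n$ transitions occur with $n$ fixed, the total dead time is at most $nT(\delta)$, a constant independent of $t$, so its fraction of $t$ vanishes automatically and no delicate double limit in $t$ and the block length is needed.
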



\begin{remark}
Arguments similar to the ones involved in the proof of the previous theorem yield
a large deviations principle holds for weak Gibbs measures, bounded observables with tempered variation
and discrete time maps with the gluying orbit property, extending \cite{You90}.
\end{remark}

\subsubsection{Criteria for gluing orbit properties}

In this subsection we provide some criteria for suspension flows to satisfy either the (uniform) gluing orbit property introduced in
Subsection~\ref{d.strong.specification} or a non-uniform measure theoretical gluing orbit property.

\begin{maintheorem}\label{thm:gluingAA}
Let $M$ be a metric space and let $f : M \rightarrow M$ satisfy the 
gluing orbit property.
Assume the roof function
$\rho: M \to \mathbb R_0^+$ is bounded from above and below, is uniformly
continuous
and the constants
\begin{equation}\label{eq:distort}
C_\xi := \sup_{n\ge 1}
        \sup_{y\in B(x,n, \xi)}  |S_n r(x) -S_n r(y)|
         < \infty
         \quad\text{satisfy }
   	\lim_{\xi \to 0} C_\xi =0,
\end{equation}
where $S_n r=\sum_{j=0}^{n-1} r\circ f^j$.
Then the suspension semiflow $(X_t)_t$ has the gluing orbit property.
\end{maintheorem}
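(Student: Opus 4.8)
The plan is to transfer the gluing orbit property from the base map $f$ to the suspension semiflow $(X_t)_t$ by choosing carefully the pieces of base-orbits that approximate the prescribed pieces of flow-orbits, and then absorbing the mismatch coming from the roof function into the allowed ``gluing times'' $p_i$. Fix $\vep>0$. The idea is to work with the Bowen-Walters distance on $M_\rho$: since $\rho$ is uniformly continuous and bounded away from $0$ and $\infty$, there is a $\xi=\xi(\vep)>0$ and an $\eta=\eta(\vep)>0$ such that if two base points are $\xi$-close along $n$ iterates in the Bowen metric of $f$ and their vertical coordinates differ by less than $\eta$, then the corresponding points in $M_\rho$ are $\vep$-close along the flow for the relevant time. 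One shrinks $\xi$ further so that, by hypothesis \eqref{eq:distort}, $C_\xi < \eta$; this controls how the ergodic sums $S_n\rho$ of two $\xi$-shadowing base-orbits can drift apart, which is exactly the error in synchronising ``flow time'' with ``number of base iterates''.

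Next I would reduce the flow data to base data. Given points $(x_i,s_i)\in M_\rho$ and flow-times $t_i\ge 0$, each piece $\{X_t(x_i,s_i):0\le t\le t_i\}$ projects to a finite orbit segment of $f$ starting at $x_i$: set $n_i$ to be (roughly) the number of base iterates needed for the roof sum $S_{n_i}\rho(x_i)$ to cover $s_i+t_i$, i.e. $n_i := \#\{j\ge 0 : S_j\rho(x_i) < s_i+t_i\}$. Apply the gluing orbit property of $f$ at scale $\xi=\xi(\vep)$ to the base points $x_1,\dots,x_k$ and integers $n_1,\dots,n_k$: this yields gaps $q_1,\dots,q_k\le N(\xi)$ and a base point $z$ whose $f$-orbit $\xi$-shadows each $x_i$ for $n_i$ iterates, with the $i$-th block starting at iterate $m_i := q_1+n_1+\dots+q_{i-1}+n_{i-1}$ (reading $q_0=n_0=0$). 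Because $\rho$ is bounded below by some $\rho_{\min}>0$ and above by $\rho_{\max}$, the total roof time over any block of $q_i$ base iterates lies in $[\,0,\,q_i\rho_{\max}\,]\subset[0,N(\xi)\rho_{\max}]$, so the extra ``waiting time'' the flow spends while $z$ traverses these $q_i$ glue-iterates is itself bounded by a constant $T(\vep):=N(\xi(\vep))\,\rho_{\max}$, independent of the data. Define $y\in M_\rho$ to be the point over $z$ with vertical coordinate $s_1$ (to match the first piece), and let $p_i$ be the flow-time elapsed while $z$ runs through the $q_i$ gap iterates plus the fractional corrections needed to land the flow exactly at the base point $f^{m_i}(z)$ with the right height $s_i$; each $p_i\le T(\vep)$ by the above.

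The verification step: one checks that $X_t(y)$ $\vep$-shadows $X_t(x_1,s_1)$ for $t\in[0,t_1]$ and that $X_t(\underline{x}_i)$ $\vep$-shadows $X_t(x_i,s_i)$ for $t\in[0,t_i]$, where $\underline{x}_i = X_{\sum_{j<i}p_j + \sum_{j<i} t_j}(y)$. This is where the distortion bound $C_\xi<\eta$ is used: along the $i$-th block, the base orbit of $z$ is $\xi$-Bowen-close to that of $x_i$, and the accumulated roof sums $S_j\rho$ of the two orbits differ by at most $C_\xi<\eta$, so the vertical coordinates stay within $\eta$ of each other throughout the block; combined with uniform continuity of $\rho$ (controlling the geometry of the identification $(x,\rho(x))\sim(f(x),0)$) and the choice of $\xi,\eta$, this gives $\vep$-proximity in the Bowen-Walters metric. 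The time-reparametrisation between ``flow time on the $z$-orbit'' and ``flow time on the $x_i$-orbit'' is itself $O(C_\xi)=o(1)$, which is why one can match the flow-times $t_i$ up to a harmless error that is also absorbed into $p_i\le T(\vep)$.

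The main obstacle I expect is bookkeeping the reparametrisation of time precisely: the flow along $y$ does not cover the same base-iterates in the same real time as the flow along $x_i$, because $S_n\rho(z)\ne S_n\rho(x_i)$ in general. Making the shadowing statement clean requires showing that the discrepancy between these two ``clocks'' is at most $C_\xi$ uniformly over the whole concatenation (not just over one block), and that one can absorb the cumulative clock drift either into the admissible slack $\vep$ of the Bowen-Walters shadowing or into the gluing times $p_i$ without letting $p_i$ exceed the uniform bound $T(\vep)$. The hypothesis \eqref{eq:distort} (sublinear, in fact bounded, distortion of the roof cocycle) is precisely what keeps this drift bounded, so the argument hinges on using it quantitatively; the boundedness of $\rho$ away from $0$ and $\infty$ then converts ``bounded number of glue iterates'' into ``bounded glue time'', which is the definition of the gluing orbit property for the semiflow.
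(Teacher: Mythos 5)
Your proposal follows essentially the same route as the paper: reduce each flow segment to a base segment via lap numbers $n_i$ determined by \eqref{eq:turns}-type inequalities, apply the gluing orbit property of $f$ at a finer scale $\xi$ chosen so that $C_\xi$ is small compared with $\vep$ and $\inf\rho$, start the shadowing point at height $s_1$, absorb the roof-sum drift into explicit gluing times $p_i$ bounded by (a constant multiple of) $N(\xi)\sup\rho$, and verify $\vep$-proximity in the Bowen--Walters metric. The only place your sketch is thinner than the paper is the ``clock drift'' you correctly flag as the main obstacle: the paper resolves it by noting that $C_\xi\ll\inf\rho$ forces the two lap numbers at any common time to differ by at most one, and then runs an explicit three-case Bowen--Walters estimate (lap numbers equal, or off by $\pm1$) for the shadowing and for the choice of $p_1$ — your phrase ``the vertical coordinates stay within $\eta$'' silently covers the off-by-one case, which is where most of the actual computation lives.
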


Let us observe that condition~\eqref{eq:distort} is a bounded distortion property for the roof function.
It is not hard to check It holds e.g. for H\"older continuous observables and uniformly expanding dynamics.
Since the requirement of the theorem on the base dynamics to satisfy a gluing orbit property then the later result
applies for suspension flows of transitive but non topologically mixing subshifts of finite type.
%
%
%
%
From the measure theoretical sense the shadowing of pieces of orbits can be actually non-uniform
in the following sense.

\begin{definition}\label{def:nugluing} (Non-uniform gluing)
Let $(X_t)_t$ be a semiflow on a separable metric space $M$ and consider a
$(X_t)_t$-invariant and ergodic probability measure $\overline \mu$.
We say that $((X_t)_t, \overline{\mu})$ has the  \emph{non-uniform gluing orbit property}
if for any $\vep>0$ and for $\overline \mu$-almost every  point $x \in M$ and $t\ge 0$
there exists
$T((x,t,\vep)>0$ so that
$$
\lim_{\vep\to 0} \limsup_{t\to +\infty} \frac{T(x, t,\vep)}{t}=0
$$
and for
$\overline \mu^k$-almost every points  $(x_1 , x_2, \dots, x_k) \in M^k$ and
times $t_1, \dots, t_k \ge 0$ there are $0\le p_i  \le T(x_i, t_{i}, \vep)$ and $x\in M$ satisfying
$$
d( X_t(x), X_t(x_1) ) <\vep
    \quad \forall t \in [0, t_1]
$$
and, if $\underline{x}_i= X_{\sum_{j=0}^{i-1} p_j+ t_j}(y) \in M$ then
$
d( X_{t}  (\underline{x}_i) , X_t(x_i) ) <\vep,
     \forall t \in [0, t_i]
$
for every $2 \le i \le k$.
\end{definition}

The previous property, similar to the gluing orbit property, roughly means that at least for
a full measure set of points (with respect to $\overline\mu$)
one can shadow the prescribed pieces of orbits by a real orbit and that the time length needed from one piece to the following can be bounded
by some time $T(x, t, \vep)$ that depends both on the point $x$ and the proximity $\vep$
but that sublinear growth in $t$. Actually the integrability of the roof function is enough to
obtain the non-uniform gluing orbit property.
This allows to consider e.g. suspension flows over subshifts
of countable type (see Section~\ref{sec:examples}).

\begin{maintheorem}\label{thm:nugluing1}
Let $M$ be a 
metric space and assume that $f: M \to M$ satisfies the
gluing orbit property and let $\mu$ be an $f$-invariant, ergodic probability measure.
Assume the roof function $\rho: M \to \mathbb R_0^+$  is  continuous, bounded from below, $\rho\in L^1(\mu)$
and the constants
\begin{equation}\label{eq:distortnun}
C_\xi (x):= \sup_{n\ge 1}
        \sup_{y\in B(x,n, \xi)}  |S_n r(x) -S_n r(y)|
         < \infty
         \quad\text{satisfy}\quad
         \lim_{\xi \to 0} C_\xi(x) =0
         \text{ for $\mu$-a.e. $x$.}
\end{equation}
Then  the suspension flow $(X_t)_t$ has the non-uniform gluing orbit property with
respect to the invariant measure $\overline \mu$.
\end{maintheorem}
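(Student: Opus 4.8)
The plan is to mimic the structure of the proof of the uniform version (Theorem~\ref{thm:gluingAA}), replacing the uniform bounds on the roof function and on the gap function by almost-sure, sublinear ones obtained from Birkhoff's ergodic theorem. First I would fix $\varepsilon>0$ and choose $\xi=\xi(\varepsilon)>0$ small so that the Bowen--Walters distance on $M_\rho$ between two points $(x,s)$ and $(y,s)$ is controlled by $\xi$-closeness in $M$ together with smallness of $|S_nr(x)-S_nr(y)|$ along the relevant number of iterates; here I use the standard comparison between the Bowen--Walters metric and the product structure recalled at the start of Section~\ref{sec:criteria}, plus uniform continuity of $\rho$ on a full-measure set (or continuity of $\rho$ together with the lower bound $\rho\ge c>0$, which makes the number of base iterates realized in a flow-time interval of length $t$ comparable to $t$). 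Since $f$ has the gluing orbit property, there is an integer $N=N(\xi)$ as in Definition~\ref{def:gluing1}.

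Next I would set up the almost-sure ingredients. Given the $(X_t)_t$-invariant ergodic measure $\overline\mu=(\mu\times\Leb)/\int\rho\,d\mu$, the bad set where $C_\xi(x)=\infty$ for some $\xi$ in a countable sequence $\xi\downarrow 0$ is $\mu$-null by hypothesis~\eqref{eq:distortnun}; remove it. For $x$ in the good set, write $n(x,t)$ for the number of complete base iterates that the flow orbit of $(x,0)$ performs up to flow-time $t$, i.e.\ $S_{n(x,t)}\rho(x)\le t< S_{n(x,t)+1}\rho(x)$. By Birkhoff applied to $\rho\in L^1(\mu)$, for $\mu$-a.e.\ $x$ one has $\frac1{n}S_n\rho(x)\to\int\rho\,d\mu$, hence $n(x,t)/t\to 1/\int\rho\,d\mu$ and in particular $n(x,t)=O(t)$. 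Now define, for a point $(x,s)\in M_\rho$ and a time $t$, the transition time budget
\[
T((x,s),t,\varepsilon):=\big(N(\xi(\varepsilon))+1\big)\,\|\rho\|_\infty'
\]
where $\|\rho\|_\infty'$ should be read as any almost-sure-finite majorant of $\rho$ along orbits; more precisely I would instead take $T$ to be $\sup\{\rho(X_u(x)):0\le u\le \text{(a fixed bound)}\}$ times $(N+1)$, and verify that $\limsup_{t\to\infty}T((x,s),t,\varepsilon)/t=0$ trivially since $T$ does not grow with $t$ at all, and $\lim_{\varepsilon\to0}$ of $\limsup_t T/t$ is then $0$ automatically. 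The only genuinely $x$-dependent, unbounded-in-principle quantity enters through $C_\xi(x)$, which is finite a.s.\ and which we only need finite (not small uniformly), because the distortion error it produces in the Bowen--Walters distance is absorbed by choosing $\xi$ small after we know which finite value $C_\xi(x_i)$ takes — this forces a slightly careful ordering: the $\xi$ must be chosen depending on the finitely many points $x_1,\dots,x_k$, which is allowed since in Definition~\ref{def:nugluing} the shadowing orbit $x$ is produced after the $x_i$ are given.

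Then I would carry out the gluing. Given $\overline\mu^k$-a.e.\ $(x_1,\dots,x_k)$ with $x_i=(\hat x_i,s_i)$ and flow-times $t_1,\dots,t_k\ge0$, let $n_i=n(\hat x_i,t_i)$ be the associated numbers of base iterates (finite, $O(t_i)$); apply the gluing orbit property of $f$ to the base points $\hat x_1,\dots,\hat x_k$ with lengths $n_1,\dots,n_k$ to get transition gaps $q_1,\dots,q_k\le N(\xi)$ and a base point $z\in M$ that $\xi$-shadows each $\hat x_i$ for $n_i$ iterates, with the prescribed concatenation pattern. Lift $z$ to $y=(z, s_1)\in M_\rho$ (adjusting the initial fibre coordinate so that the first piece is shadowed from time $0$), and define the flow-transition times $p_i:=\sum_{\ell=0}^{q_i-1}\rho\big(f^{\text{(appropriate iterate)}}(z)\big)+(\text{fibre corrections})$, which are each bounded by $(N+1)\cdot(\text{a.s.-finite local sup of }\rho)\le T(x_i,t_i,\varepsilon)$. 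Finally, for each $i$, on the flow-time interval $[0,t_i]$ the orbit of $\underline x_i=X_{\sum p_j+t_j}(y)$ stays within Bowen--Walters distance $\varepsilon$ of the orbit of $x_i$: this is exactly where one combines (a) $d(f^j(z'),f^j(\hat x_i))\le\xi$ for $0\le j\le n_i$ from the base gluing, (b) the bound $|S_{n_i}r(z')-S_{n_i}r(\hat x_i)|\le C_\xi(\hat x_i)$ from~\eqref{eq:distortnun} to control the drift of fibre coordinates, and (c) the elementary estimate of the Bowen--Walters distance in terms of these two, as in the proof of Theorem~\ref{thm:gluingAA}.

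The main obstacle I expect is precisely the fibre-coordinate bookkeeping: because the roof function is not cohomologous to a constant, the point $z$ produced by the base gluing, when flowed, does not return to the zero section at the same times as $\hat x_i$ does, so the vertical coordinates of the two orbits drift apart by an amount governed by $|S_jr(z)-S_jr(\hat x_i)|$, and one must check that this drift, together with the Bowen--Walters identification $(x,\rho(x))\sim(f(x),0)$, still yields $\varepsilon$-closeness in the flow — uniformly over $j\le n_i$ and over $i$ — after $\xi$ is chosen small relative to the finitely many finite constants $C_\xi(\hat x_i)$. A secondary, more bureaucratic point is verifying the sublinear-growth clause $\lim_{\varepsilon\to0}\limsup_{t\to\infty}T(x,t,\varepsilon)/t=0$; here it is cleanest to make $T(x,t,\varepsilon)$ genuinely independent of $t$ (depending only on $N(\xi(\varepsilon))$ and on $\sup_{0\le u\le u_0}\rho(X_u(x))$ for a fixed $u_0$), so that the $\limsup$ is $0$ outright, and the requirement reduces to the a.s.\ finiteness of that local supremum of $\rho$, which follows from continuity of $\rho$. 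Integrability of $\rho$ is used only to make $n(\hat x_i,t_i)$ finite and $O(t_i)$ and to guarantee $\overline\mu$ is well defined.
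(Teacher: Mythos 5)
Your overall architecture matches the paper's: reduce to the gluing orbit property of $f$ on the base, count laps via \eqref{eq:turns}, control the fibre drift through $C_\xi(x_i)$, and split into the three lap-number cases exactly as in Theorem~\ref{thm:gluingAA}. The genuine gap is in your treatment of the transition-time budget $T$. You propose to make $T(x,t,\vep)$ independent of $t$, bounded by $(N(\xi)+1)$ times a ``local sup'' of $\rho$ over a fixed initial orbit segment, so that sublinearity is ``automatic''. This cannot work when $\rho$ is merely in $L^1(\mu)$ and unbounded: the gluing gap $p_i$ that joins the end of the $i$-th shadowing segment to the start of the $(i+1)$-th is a sum of about $\tilde p_i+3\le N(\xi)+3$ values of $\rho$ evaluated at the iterates $f^{n_i-1+j}$, i.e.\ \emph{far along} the orbit (at lap number $n_i\to\infty$ as $t_i\to\infty$), not on a fixed initial piece. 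For an ergodic $\mu$ with $\mu(\rho>K)>0$ for all $K$, Poincar\'e recurrence forces $\sup_n\rho(f^n(x))=\infty$ for a.e.\ $x$, so no $t$-independent a.s.-finite majorant of these values exists. This is precisely why the conclusion of the theorem is only the \emph{non-uniform} gluing orbit property: the paper takes
$T((x_i,s_i),t_i,\xi)=\sum_{j=0}^{\tilde p_i+2}\rho(f^{j+n_i-1}(x_i))+C_\xi(x_i)$,
which genuinely depends on $t_i$ through $n_i$, and then proves sublinearity by Birkhoff's ergodic theorem: since $\frac1nS_n\rho(x_i)\to\int\rho\,d\mu$, the window sum $\sum_{j=n_i-1}^{n_i+\tilde p_i+1}\rho(f^j(x_i))$ is $o(n_i)=o(t_i)$. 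Your closing remark that integrability of $\rho$ is used ``only'' to make $n_i$ finite and $\overline\mu$ well defined misses this: the finiteness of $n_i$ and the bound $n_i=O(t_i)$ already follow from $\rho$ being bounded below, whereas $\rho\in L^1(\mu)$ is what makes the $t$-dependent gap sublinear.

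A secondary point to tighten: in Definition~\ref{def:nugluing} the budget $T(x_i,t_i,\vep)$ is attached to each point individually before the tuple is assembled, so you should not let $\xi$ (hence $T$) depend on the whole collection $x_1,\dots,x_k$; the paper's formulation defines $T$ pointwise and only asks $\xi+C_\xi(x_i)<\vep$ for each $i$, which is consistent with the quantifier order of the definition. The rest of your outline (the fibre bookkeeping via $|S_jr(z)-S_jr(\hat x_i)|\le C_\xi(\hat x_i)$ and the Bowen--Walters estimates) is the right plan and coincides with what the paper does.
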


The previous result clearly applies in the case that $f$ is a countable full branch Markov expanding map
and any integrable roof function. Finally we prove the following:

\begin{maintheorem}\label{thm:nugluing2}
Let $M$ be a compact Riemannian manifold and let   $f: M \setminus \cC \to M$ be a $C^{1+\al}$ local
diffeomorphism in the whole manifold $M$ except in a non-degenerate critical/singular set $\mathcal{C}\subset {M}$:
there exists $B>0$ such that
\begin{enumerate}
\item    \quad $\displaystyle{\frac{1}{B}dist(x,\mathcal{C})^{\beta}\leq
    \frac {\|Df(x)v\|}{\|v\|}\leq B\,dist(x,\mathcal{C})^{-\beta} }$ for
    all $v\in T_x {M}$.
\item \quad For every $x,y\in {M}\setminus\mathcal{C}$ with
    $dist(x,y)<dist(x,\mathcal{C})/2$ we have
    $$\displaystyle{\left|\log\|Df(x)^{-1}\|- \log\|Df(y)^{-1}\|\:\right|\leq
    \frac{B}{dist(x,\mathcal{C})^{\beta}}dist(x,y)}.$$
\end{enumerate}
 Assume that $\mu$ is an $f$-invariant, ergodic and expanding measure
and that the roof function $\rho : M \setminus \mathcal{C} \rightarrow \mathbb R_0^+$ is continuous, bounded from below, $\rho\in L^1(\mu)$
and the bounded distortion
condition \eqref{eq:distortnun} holds. Then the suspension flow
$(X_t)_t$ has the non-uniform gluing orbit property with respect to the invariant measure
$\overline \mu$.
\end{maintheorem}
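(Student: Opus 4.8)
The plan is to reduce the statement to the suspension construction behind Theorem~\ref{thm:nugluing1}, once the base dynamics $(f,\mu)$ is shown to satisfy a measure-theoretic version of the gluing orbit property. Concretely, I would first prove that $(f,\mu)$ satisfies the non-uniform specification property in the sense of \cite{Va12} recalled above: there is $\de_0>0$ and, for $\mu$-a.e.\ $x$ and every $0<\vep<\de_0$, a waiting time $p(x,n,\vep)\ge 1$ with $\lim_{\vep\to0}\limsup_{n}\frac1n p(x,n,\vep)=0$ such that any finitely many Bowen balls $B(x_i,n_i,\vep)$ centered at $\mu$-generic points can be $\vep$-shadowed by a single orbit with lags $p_i\le p(x_i,n_i,\vep)$ between consecutive pieces. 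Granting this, the conclusion follows by repeating the proof of Theorem~\ref{thm:nugluing1} almost verbatim: that proof invokes the uniform gluing of the base only through the shadowing of finitely many orbit pieces, so substituting the non-uniform specification above and using $\rho\in L^1(\mu)$, $\rho$ bounded below, and the tempered distortion condition~\eqref{eq:distortnun}, one gets a flow waiting time $T(\bar x,t,\vep)$ essentially equal to $p(x,n,\vep)$ with $n\approx t/\int\rho\,d\mu$, plus the Birkhoff discrepancy $|S_n\rho(x)-n\int\rho\,d\mu|$, which is sublinear in $t$ for $\mu$-a.e.\ $x$; the distortion condition is precisely what converts $\vep$-shadowing in the base into $\vep'$-shadowing in the Bowen--Walters metric of $M_\rho$, and the non-uniform gluing orbit property for $(X_t)_t$ with respect to $\overline\mu$ follows.

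The heart of the argument is thus the non-uniform specification of $(f,\mu)$, which I would obtain from hyperbolic times. Since $\mu$ is an expanding measure and $\cC$ is non-degenerate, the Pliss-type lemma of Alves--Bonatti--Viana (and Pinheiro for general expanding measures) yields $\si\in(0,1)$, $\de_1>0$ and a full-$\mu$-measure set $\Lambda_0$ such that each $x\in\Lambda_0$ has a set $\cH(x)\subset\NN$ of $\si$-hyperbolic times of positive lower density $\theta>0$, and for $n\in\cH(x)$ there is a hyperbolic pre-ball $V_n(x)\ni x$ on which $f^n$ is a diffeomorphism onto $B(f^n(x),\de_1)$, with $d(f^j(y),f^j(z))\le\si^{(n-j)/2}\,d(f^n(y),f^n(z))$ for $y,z\in V_n(x)$, $0\le j\le n$, and with uniformly bounded distortion. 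Here the slow-recurrence content of ``$\mu$ expanding'' absorbs the passages of the orbit near $\cC$, and the non-degeneracy conditions (1)--(2) are what make the distortion estimate work near $\cC$. To concatenate orbit pieces: fix $\vep$, take $\de_1<\vep$, and given $\mu$-generic $x_1,\dots,x_k$ and lengths $n_1,\dots,n_k$, for each $i$ let $\ell_i$ be the first element of $\cH(x_i)$ with $\ell_i\ge n_i$; using ergodicity of $\mu$ together with the positive density of hyperbolic times, choose $m_i\ge\ell_i$ in $\cH(x_i)$, with $m_i-n_i$ bounded by a quantity depending only on $x_i,n_i,\vep$, so that $\{f^m(x_i):m\in\cH(x_i)\cap[\ell_i,m_i]\}$ is $\de_1$-dense in $\supp\mu$; then build the shadowing orbit backwards, stage by stage, picking at stage $i$ a point of $V_{\ell_i}(x_i)$ whose $\ell_i$-th image --- which is free to range over all of $B(f^{\ell_i}(x_i),\de_1)$ --- is pulled, after a lag $p_i\le m_i-n_i$, into the pre-ball of the next stage. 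Backward contraction along the pre-balls gives $d(f^j(z),f^j(x_i))\le\de_1<\vep$ for $0\le j\le n_i$, as required, and $p(x_i,n_i,\vep):=m_i-n_i$ has the desired sublinear growth.

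An alternative, possibly shorter, route is to induce: the Markov structures of Alves--Bonatti--Viana and Pinheiro attached to the expanding measure $\mu$ produce a countable full-branch Markov map $F=f^R$ with integrable return time $R$; the suspension semiflow over $(f,\rho)$ is isomorphic to the suspension semiflow over $(F,S_R\rho)$, the new roof $S_R\rho$ is integrable and still satisfies~\eqref{eq:distortnun} (inheriting it from $\rho$ and the Gibbs--Markov distortion of $F$), and countable full-branch Markov maps have the gluing orbit property; hence Theorem~\ref{thm:nugluing1} applies to $(F,S_R\rho)$ and the resulting non-uniform gluing orbit property transports back through the isomorphism, with $\overline\mu$ the image of the associated invariant measure.

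The main obstacle is twofold. First is the bounded-distortion estimate at hyperbolic times near the non-degenerate set $\cC$: controlling $\sum_{j=0}^{n-1}\bigl(\log\|Df(f^j(y))^{-1}\|-\log\|Df(f^j(z))^{-1}\|\bigr)$ along pre-balls requires combining the slow recurrence forced by ``$\mu$ expanding'' with hypotheses (1)--(2), exactly as in the construction of equilibrium and SRB states for non-uniformly expanding maps. Second, and more specific to the gluing setting, one must make the lag between the $i$-th and $(i{+}1)$-st pieces depend on $x_i,n_i,\vep$ \emph{only} --- not on the subsequent point $x_{i+1}$ --- while keeping it sublinear; this is handled by replacing ``reach the next pre-ball'' by ``$\de_1$-cover $\supp\mu$ with hyperbolic-time images of $x_i$'', which can be done in sublinearly many iterates by the positive lower density of $\cH(x_i)$ and the ergodicity of $\mu$.
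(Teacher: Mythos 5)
Your proposal is correct and follows essentially the same route as the paper: the paper's proof also reduces to the non-uniform specification property of $(f,\mu)$ (which it simply quotes from \cite{OT,Va12}, where it is derived from hyperbolic times exactly as you sketch) and then runs the suspension argument of Theorem~\ref{thm:nugluing1} with a point-dependent waiting time $T((x_i,s_i),t_i,\xi)=\sum_{j=0}^{p(x_i,n_i,\xi)+1}\rho(f^{j+n_i-1}(x_i))+C_\xi(x_i)$, whose sublinearity follows from Birkhoff's theorem and $p(x_i,n_i,\xi)/n_i\to 0$. The only difference is that you re-derive the base specification from Pliss/hyperbolic-time machinery rather than citing it, and you offer an inducing alternative that the paper does not pursue.
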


The fundamental property used in the proof of the previous theorem is the non-uniform specification property
for the invariant measure. Although it is enough to assume the measure to satisfy the non-uniform gluing orbit
property we did not state the theorem in such abstract context due to the lack of motivating examples.
Thus, an analogous statement is most likely to hold whenever $f$ is a $C^{1+\alpha}$-diffeomorphism and $\mu$ is an
$f$-invariant hyperbolic measure.

\section{Some examples}\label{sec:examples}

In this section we discuss the gluing orbit properties for some classes in both the discrete and the continuous time setting.
First we prove that  every transitive subshift of finite type satisfies the gluying orbit property.

\begin{example}\label{ex:1}
Given $d\ge 1$ and a transition matrix $A \in M_{d\times d}(\{0,1\})$ consider the one-sided subshift of finite type
 $\sigma : \Sigma_A \to \Sigma_A$ where
 $
 \Sigma_A =\{ (x_n)_{n\in \mathbb N_0} \in \{1, \dots, d\}^{\mathbb N_0} : A_{x_n,x_{n+1}=1}\}
 $
is endowed with the pseudo-distance
$$
d((x_n)_n, (y_n)_n)= 2^{-N}, \quad \text{where} \; N=\min \{ n\ge 0 : x_n \neq y_n\}.
$$
and let $\mathcal P$ denote the natural partition of $\Sigma_A$ in cylinders of size one.
Given $\vep>0$ let $N_\vep\ge 1$ be the smallest positive integer so that $2^{-N_\vep} <\vep$ and
consider the partition $\mathcal Q_\vep= \mathcal P^{(N_\vep)}$ where $\mathcal P^{(n)}=\bigvee_{j=0}^{n-1} \sigma^{-j}(\cP)$
is the dynamically defined partition. If $\cQ_\vep^{(n)}(x)$ denotes the element of the partition
$\cQ_\vep^{(n)}=\bigvee_{j=0}^{n-1} \sigma^{-j}(\cQ_\vep)$ that contains the point $x$ then for all our purposes
the dynamical ball $B_d(x,n,\vep)$ can be replaced by the partition element $\cQ_\vep^{(n)}(x)$.
We claim that if $\sigma : \Sigma_A \to \Sigma_A $ is transitive then it satisfies the gluing orbit property.
Recall that $\sigma : \Sigma_A \to \Sigma_A$ is transitive if and only if for any $i,j\in\{1, \dots, d\}$
there exists $n=n_{i,j} \ge 1$ so that $A^n_{i,j}=1$, where $A^n=(A^n_{i,j})_{i,j=1 \dots d}$.
Let $\tilde N= \max \{ n_{i,j} : i,j=1\dots d\} $.
Given $\vep>0$ take $p(\vep)= \tilde N + N_\vep$. Given $x_1, \dots, x_k \in \Sigma_A$ and $n_1, \dots, n_k \ge 1$
then it follows from the Markov property for $\sigma$ that $\sigma^{n_i}(\cQ_\vep^{(n_i)}(x_i))=\cQ_\vep(\sigma^{n_i}(x_i))$
for every $i=1\dots k$.
Set $P_i:=\sigma^{N_\vep} (\cQ_\vep(\sigma^{n_i}(x_i)))\in \cP$ and let $\hat P_{i+1}\in \cP$ denote the element of
the partition $\cP$ containing $x_{i+1}$. Using that $\cQ_\vep(x_{i+1}) \subset \hat P_{i+1} \in \cP$, by transitivity of $\sigma$,
there exists
$1\le p_i \le \tilde N$ so that
$
\sigma^{p_i} ( P_i )
	\supset P_{i+1}
	\supset \cQ_\vep(x_{i+1})
$
for every $1=1\dots k-1$.
This proves the gluing orbit property for $\sigma_A$ as claimed.
\end{example}

Indeed,
the previous example can be adapted to deal with subshifts of countable type $\sigma : \Sigma \to \Sigma$
with $\Sigma \subset S^{\mathbb N}$ and an infinite set $S\subset \mathbb N$. These model many non-uniformly
hyperbolic dynamical systems. If $\Sigma=S^{\mathbb N}$ is
the full shift then it is clear it satisfies the specification property. The same arguments as the ones of the previous example
yield that subshifts of countable type with the gluing orbit property also include important classes of subshifts as the
ones with the so called big image and preimage property (see e.g. ~\cite{MUr}).

\begin{example}
Let $M$ be a compact Riemannian manifold and $\Lambda \subset M$ be a transitive hyperbolic set for a $C^1$ flow $(X_t)_t$.
We notice that, via the existence of Markov partitions (see e.g. \cite{BR75,Bo75}), the restriction of the flow $(X_t)_t$ to
$\Lambda$ is semiconjugated to suspension flow with over a transitive subshit of finite type $\sigma$ and a H\"older
continuous roof function $\rho$ bounded away from zero. Since $\sigma$ satisfies the gluing orbit property (c.f.
 Example~\ref{ex:1}) and every H\"older observable on the shift satisfies the bounded distortion condition \eqref{eq:distort}
it follows from Theorem~\ref{thm:gluingAA} that $\Lambda$ has the gluing orbit property.
Theorem~\ref{thm:Wad} yields large deviations principles for the flow with respect to all continuous observables.
Theorem~\ref{thm:LB2} implies on a level-2 large deviations lower bound for hyperbolic flows.
\end{example}

Let us observe that suspension flows over subshifts of countable type, since do not have a compact phase space, are not
expected to have the gluing orbit property in general. Theorem~\ref{thm:nugluing1} implies that the non-uniform gluing orbit
property holds provided the roof function is integrable and satisfies the distortion condition~\eqref{eq:distortnun}.

\begin{example}\label{ex:geo}
It is well known from the pioneering works of Anosov and Sinai $C^2$-Riemannian metrics with strictly negative curvature
generate Anosov geodesic flows \cite{Anosov,AnosovS}, hence satisfy the gluing orbit property restricted to every transitive subset of the non-wandering set. In the case of non-strictly negative curvature a partial solution has been recently announced by Burns, Climenhaga, Fisher and Thompson~\cite{BCFT}. Bessa, Torres and Varandas~\cite{BTV} announced recently that
there exists a residual subset of $C^1$-metrics with bounded curvature whose geodesic flow satisfies a reparametrized gluing orbit property: for any $\epsilon>0$ there exists $K=K(\epsilon) \in \mathbb R^+$ such that
for any points $x_1, x_2, \dots, x_k \in M$ and times $t_1, \dots, t_k \ge 0$
there are $p_1, \dots, p_k  \le K(\epsilon)$, a reparametrization $\tau \in \mbox{Rep}(\epsilon)$ and a point $y\in M$
so that
$$
d( X^{\tau(t)}(y)), X^t(x_1) ) <\epsilon
    \quad \forall t \in [0, t_1]
$$
and
$$
d( X^{\tau(t+\sum_{j=0}^{i-1} p_j+ t_j)}(y) , X^t(x_i) ) <\epsilon
    \quad \forall t \in [0, t_i]
$$
for every $2 \le i \le k$.
By $\mbox{Rep}$ we denote the set of all increasing homemorphisms $\tau\colon \R \rightarrow \R$,
called {\em reparametrizations}, satisfying $\tau(0)=0$. Fixing $\epsilon>0$, we define the set
$$\mbox{Rep}(\epsilon)=\left\{\tau \in \mbox{Rep}: \left|\frac{\tau(t)}{t}-1 \right|<\epsilon, \, t \in \R \right\},$$ of
the reparametrizations $\epsilon$-close to the identity.
Let us remark that the reparametrization $\tau$ above satisfies
$
\tau(t_1+p)-\tau(t_1)
	\le (1+\epsilon) p
	\le (1+\epsilon) K(\epsilon).
$
Hence, the later condition is substantially weaker than specification (since it does not imply topologically mixing)
but implies strong transitivity conditions: for any two balls of radius $\epsilon$ there exists a point whose piece of orbit
up to a definite time $(1+\epsilon) K(\epsilon)$ (depending only on $\epsilon$) intersects both balls.
\end{example}

In the following example we shall consider flows with an intermittency phenomenon.

\begin{example}
Consider $M=[0,1]$ and the Maneville-Pomeau map $f_\alpha:[0,1]\to [0,1]$ given by
\begin{equation*}\label{eq. Manneville-Pomeau}
f_\al(x)= \left\{
\begin{array}{cl}
x(1+2^{\alpha} x^{\alpha}) & \mbox{if}\; 0 \leq x \leq \frac{1}{2}  \\
2x-1 & \mbox{if}\; \frac{1}{2} < x \leq 1.
\end{array}
\right.
\end{equation*}
for $\alpha \in (0,1)$. Since this map is semiconjugated to the full shift on two symbols
then it satisfies the specification property.
For any roof function $\rho$ satisfying \eqref{eq:distort} and bounded away from zero the semiflow
has the gluing orbit property.

Take $\phi : M_\rho \to \mathbb R$ smooth observable and the reduced observable $\bar \phi : M \to \mathbb R$
given by $\bar\phi(x) = \int_0^{\rho(x)} \phi( X_s(x,0)) \, ds$. If $\bar\psi$ satisfies $\sup \bar \phi -\inf \bar \phi  < \log 2$
there exists a unique equilibrium state $\mu_{\bar\phi}$ for $f$ with respect to $\bar\phi$ (see e.g.~\cite{VV10}).
Furthermore, the unique equilibrium state $\mu:= \mu_\phi \times \Leb / \int \rho \, d\mu_{\phi}$ for the flow
satisfies a large deviations principle for every continuous observable.
This is the case e.g. for the potential $\phi=0$ and the corresponding (unique) maximal entropy measure $\mu_0$.
%
In the case there are more than one equilibrium state the rate function in the large deviations principle may fail to be
strictly convex, in which case the exponential large deviations can fail.
For instance, Melbourne and Nicol \cite{MN08} obtained (upper and lower) polynomial deviation
bounds for H\"older continuous observables and the SRB measure of these suspension semiflows.
\end{example}

It is likely that the previous example can be adapted to deal with more general
almost-hyperbolic flows (e.g. suspension flows of diffeomorhisms obtained from Anosov
diffeomorphisms by isotopy to obtain finitely many indifferent periodic points as in \cite{HY}).

\section{Some comments and open questions}\label{open}

After introducing this property of gluing, it seems natural not only to verify other examples that do satisfy it
but also to explore it as a tool. Similarly to the use of specification as a tool, we expect the gluing orbit property to be
an useful tool to derive other applications (e.g. multifractal analysis). Let us also stress that the proof of Theorem~\ref{thm:gluingAA} in the stronger context of
a bi-Lipschitz homeomorphism  $f$ and H\"older continuous roof function $\rho$ can be slightly simplified. This follows
from the fact that, under these stronger assumptions, one may make use of the pseudo-metric $d_{\pi}$ instead of
the Bowen-Walters distance. 
Although the gluing orbit property is strictly weaker than the specification property it is an interesting challenge to
study their relation. With that purpose we pose the following question:

\vspace{.1cm}
\noindent {\bf Question 1:} Let $\mathcal G$ be the class of $C^1$-diffeomorphisms with the gluing orbit property.
Is there a topologically large (e.g. open, dense, residual, ..) subset $\mathcal G_1$ of $\mathcal G$ so that every
topologically mixing diffeomorphism in $\mathcal G_1$ satisfies the specification property?

\vspace{.1cm}
We believe some regularity (e.g. smoothness) of the dynamical system should be necessary for presenting a positve
answer to the later question.
The results by Bowen~\cite{Bo74} and Haydn and Ruelle~\cite{Ru92,HR} on the thermodynamical formalism of expansive maps with the specification property and recent extensions by Climenhaga and Thompson~\cite{CT13} motivate the study of
the ergodic features of maps with the gluing orbit property.

\vspace{.1cm}
\noindent {\bf Question 2:} Let $f$ be an expansive map (diffeomorphism or non-critical endomorphism) with the gluing orbit property. Does there exist a finite number of equilibrium states for every regular (e.g. H\"older continuous) potential? Do these
have exponential decay of correlations? The associated transfer operator is quasi-compact on the $L^{p}$ spaces?

\vspace{.1cm}
In the discrete time setting one could hope to obtain a spectral decomposition of the non-wandering set in a finite
number of pieces, similar to the one for hyperbolic dynamics, that could guarantee that some power of the dynamics
satisfies the specification property for each transitive piece in the decomposition. Since constant reparametrizations
of time-continuous dynamics does not change the mixing properties this picture cannot be expected for flows with
the gluing orbit property. Some interesting classes of dynamical systems for which decay of correlations and large deviations
that still remain not fully understood are billiards and geodesic flows. In virtue of our large deviations results
it is natural to ask the following questions:

\vspace{.1cm}
\noindent {\bf Question 3:}
(a) Which billiard flows satisfy the gluing orbit property? Do these include dispersing or Sinai billiards flows?
(b) Do ``most" geodesic flows satisfy the gluing orbit property?
\vspace{.1cm}

By Example~\ref{ex:geo} the answer to item (b) in the previous question has partial answers in either lower
topologies or whenever some condition is given on the set of points with non-negative curvature.
We stress that the notions of non-uniformly gluing and a similar notions of almost gluying (similar to the similar notion from \cite{RVZ}) can probably be used to study large deviations and multifractal analysis (see e.g. \cite{BoVa}).
Finally, it is well known from earlier work of Sigmund's~\cite{Sig}
for maps with specification have a rich simplex of invariant probability measures. We refer the reader to the survey by
Kwietniak, Lacka and Oprocha~\cite{KLO} for a good account on some recent developments and the study of this
simplex for maps with
specification like properties. Taking this into account it is natural to ask the following question:

\vspace{.1cm}
\noindent {\bf Question 4:}
What is the ``richness" of the simplex of invariant probability measures for dynamics with the gluing orbit property?
Which items of Sigmund's theorem (c.f. Theorem~11 in \cite{KLO}) still hold for dynamics with the gluing orbit property?
\vspace{.1cm}


\section{The gluing orbit property and uniform hyperbolicity}\label{proofs}

\subsection{Proof of Theorems~\ref{thm:robust} and ~\ref{thm:generic}}\label{subset:hyper}

In this section we shall prove that either $C^1$-robustly or $C^1$-generically, the gluing orbit property implies the flow to be
uniformly hyperbolic. The proofs here follow closely the strategy in \cite{AST}
of proving that the later conditions imply that the flow is a star flow, a condition that is equivalent to uniform hyperbolicity of the
flow in the $C^1$-topology (we refer the reader to the subsections below for details).
The main novelty is to understand how the gluing orbit property can be used to
establish the constancy of index among hyperbolic critical elements (c.f. Proposition~\ref{prop:index} below).

\subsubsection*{Proof of Theorem~\ref{thm:robust}}

Our purpose here is to prove that the $C^1$-robustness of the gluing orbit property implies
on the uniform hyperbolicity of the original flow.
The argument follows along the same lines of the strategy to prove that robust specification implies
on uniform hyperbolicity, with some extra effort due to the fact that one cannot a priori choose a
definite iterate of the flow for which stable and unstable manifolds are long enough to intersect.
One key ingredient is to prove that all hyperbolic critical elements are necessarily of the same index, that is,
the dimension of its stable bundle in the hyperbolic decomposition (this is the counterpart of
\cite[Theorem~3.3]{AST} in our setting).

Let us introduce some necessary notations. Given a hyperbolic critical element $p$ with hyperbolic decomposition
$T_{\mathcal O(p)} M = E^s \oplus < X > \oplus E^u$ (if $p$ is periodic) or $T_p M = E_p^s \oplus E_p^u$ (if $p$ is a singularity)
denote the stable index by $\text{ind}^s(p):=\dim E^s_p$. Given a hyperbolic critical element $p$ and $\vep>0$,
the local strong stable manifolds of size $\vep$ at $p$ is given by
$$
W^{ss}_\vep(p)
	= \{ x\in M \colon d( X_{t}(x), X_{t}(p)) \le \vep \; \text{for every } t \ge 0 \}
$$
is a smooth submanifold (well defined by uniform hyperbolicity) and set
$$
W^{cs}_\vep(\mathcal O(p)) = \bigcup_{t \in \mathbb R} W^{ss}_\vep(X_t(p)).
$$
The local strong unstable manifolds $W^{uu}_\vep(p)$ of size $\vep$ at $p$ and the submanifold
$W^{cs}_\vep(\mathcal O(p))$ are defined analogously by the corresponding stable manifolds for
the reversing time flow $(X_{-t})_t$.

\begin{proposition}\label{prop:index}
If $p,q$ are hyperbolic critical elements for $X\in \mathfrak X^1(M)$ and the generated flow $(X_t)_t$
satisfies the $C^1$-robust gluing orbit property
then $\text{ind}^s(p) = \text{ind}^s(q)$. 
Moreover, for any $\vep>0$ there exists $L=L(\vep)>0$ so
that $X_L(W_\vep^{cu}(\mathcal{O}(p))) \cap W_\vep^{cs}(\mathcal{O}(q))) \neq \emptyset$ and
$X_L(W_\vep^{cu}(\mathcal{O}(q)))) \cap W_\vep^{cs}(\mathcal{O}(p))) \neq \emptyset$. In particular
$W^{cs}(p)$ and $W^{cu}(q)$ intersect.
\end{proposition}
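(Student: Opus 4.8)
The plan is to realise the announced heteroclinic intersections as limits of long shadowing orbits furnished by the gluing orbit property, and then to deduce equality of the stable indices by perturbing, inside the robustness neighbourhood, to a Kupka--Smale vector field and running a transversality dimension count. Throughout I may assume $\mathcal{O}(p)\neq\mathcal{O}(q)$, since otherwise there is nothing to prove, and I denote by $\mathcal U\subset\mathfrak{X}^1(M)$ the $C^1$-open neighbourhood of $X$ on which the gluing orbit property holds. The logical order is: first produce the $X_L$-intersections and the heteroclinic orbits, then deduce that $W^{cs}(p)$ and $W^{cu}(q)$ meet, and finally obtain $\text{ind}^s(p)=\text{ind}^s(q)$, even though the statement lists the last assertion first.

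Fix a small $\vep>0$, smaller than $\dist(\mathcal{O}(p),\mathcal{O}(q))$ and than the radii within which the local strong (un)stable manifolds of $p$ and $q$ are embedded discs contained in the respective global manifolds. Let $T=T(\vep/2)>0$ be given by the gluing orbit property and, for each $\tau>0$, apply it to $x_1=p$, $x_2=q$, $t_1=t_2=\tau$: there are $0\le p_1(\tau),p_2(\tau)\le T$ and $y_\tau\in M$ with $d(X_t(y_\tau),X_t(p))<\vep/2$ for $t\in[0,\tau]$ and $d(X_t(X_{\tau+p_1(\tau)}(y_\tau)),X_t(q))<\vep/2$ for $t\in[0,\tau]$. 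By compactness of $M$, of $\mathcal{O}(p)$ and of $[0,T]$, pass to a subsequence $\tau_n\to\infty$ along which $X_{\tau_n}(y_{\tau_n})\to z$, $X_{\tau_n}(p)\to p^*\in\mathcal{O}(p)$ and $p_1(\tau_n)\to L\in[0,T]$. For a fixed $s\ge0$ and all large $n$ one has $\tau_n-s\in[0,\tau_n]$, so $d(X_{-s}(X_{\tau_n}(y_{\tau_n})),X_{-s}(X_{\tau_n}(p)))<\vep/2$; letting $n\to\infty$ and using continuity of the flow, $d(X_{-s}(z),X_{-s}(p^*))\le\vep/2$ for every $s\ge0$, i.e. $z\in W^{uu}_\vep(p^*)\subset W^{cu}_\vep(\mathcal{O}(p))$. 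Similarly $X_{\tau_n+p_1(\tau_n)}(y_{\tau_n})\to X_L(z)$ and, for fixed $s\ge0$ and large $n$, $d(X_s(X_{\tau_n+p_1(\tau_n)}(y_{\tau_n})),X_s(q))<\vep/2$, whence $X_L(z)\in W^{ss}_\vep(q)\subset W^{cs}_\vep(\mathcal{O}(q))$. This gives $X_L(W^{cu}_\vep(\mathcal{O}(p)))\cap W^{cs}_\vep(\mathcal{O}(q))\neq\emptyset$; since $z\in W^{uu}_\vep(p^*)$, for every $L'\ge L$ the point $X_{L-L'}(z)$ again lies in $W^{cu}_\vep(\mathcal{O}(p))$ and is carried by $X_{L'}$ to $X_L(z)$, so the same holds for all $L'\ge L$, and running the symmetric argument with $p,q$ interchanged yields a single $L=L(\vep)>0$ valid for both intersection statements. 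Since $\vep$ is small, $W^{cu}_\vep(\mathcal{O}(p))$ and $W^{cs}_\vep(\mathcal{O}(q))$ lie in the flow-invariant global manifolds $W^{cu}(\mathcal{O}(p))$ and $W^{cs}(\mathcal{O}(q))$, so $z\in W^{cu}(\mathcal{O}(p))\cap W^{cs}(\mathcal{O}(q))$; as $\vep<\dist(\mathcal{O}(p),\mathcal{O}(q))$ the point $z$ lies on neither orbit, so $\mathcal{O}(z)$ is a genuine heteroclinic orbit from $\mathcal{O}(p)$ to $\mathcal{O}(q)$, and symmetrically there is a genuine heteroclinic orbit from $\mathcal{O}(q)$ to $\mathcal{O}(p)$ (in particular $W^{cu}(q)$ and $W^{cs}(p)$ meet).

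For the equality $\text{ind}^s(p)=\text{ind}^s(q)$ I would now exploit robustness. The previous paragraph applies verbatim to every $Y\in\mathcal U$: the hyperbolic critical elements $p,q$ have hyperbolic continuations $p_Y,q_Y$ with $\text{ind}^s(p_Y)=\text{ind}^s(p)$, $\text{ind}^s(q_Y)=\text{ind}^s(q)$ for $Y$ in a possibly smaller neighbourhood $\mathcal U'\subset\mathcal U$, and the flow of $Y$ has the gluing orbit property, hence genuine $Y$-heteroclinic orbits joining $\mathcal{O}(p_Y)$ and $\mathcal{O}(q_Y)$ in both directions. By the Kupka--Smale theorem the vector fields whose critical elements are hyperbolic and whose stable and unstable manifolds of critical elements are mutually transverse form a residual subset of $\mathfrak{X}^1(M)$, hence, being dense, meet $\mathcal U'$; fix such a $Y$. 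Choosing a point $x$ on a $Y$-heteroclinic orbit from $\mathcal{O}(q_Y)$ to $\mathcal{O}(p_Y)$ lying on neither orbit, the transverse intersection of $W^u(\mathcal{O}(q_Y))$ and $W^s(\mathcal{O}(p_Y))$ near $x$ is a submanifold of dimension $\dim W^u(\mathcal{O}(q_Y))+\dim W^s(\mathcal{O}(p_Y))-\dim M$, which must be $\ge1$ because it contains the orbit of $x$; in the periodic case this reads $\text{ind}^s(p_Y)-\text{ind}^s(q_Y)+1\ge1$, i.e. $\text{ind}^s(p_Y)\ge\text{ind}^s(q_Y)$. The heteroclinic orbit in the other direction yields $\text{ind}^s(q_Y)\ge\text{ind}^s(p_Y)$, and therefore $\text{ind}^s(p)=\text{ind}^s(p_Y)=\text{ind}^s(q_Y)=\text{ind}^s(q)$. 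The case where $p$ or $q$ is a singularity is the same bookkeeping with $\dim W^s(p)=\text{ind}^s(p)$ and $\dim W^u(p)=\dim M-\text{ind}^s(p)$; there it in fact excludes mixed periodic/singular configurations, consistently with $(X_t)_t$ being Anosov by Theorem~\ref{thm:robust}.

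The compactness passages to the limit and the verification that the limit orbit sits in the local strong (un)stable manifolds are routine. I expect the main obstacle to be the equality of indices: this is the only place where $C^1$-robustness is genuinely exploited, and one must be careful that the heteroclinic orbits produced are honest, i.e. disjoint from the critical elements — which is exactly why the smallness $\vep<\dist(\mathcal{O}(p),\mathcal{O}(q))$ is imposed — and that the transversality dimension count is carried out correctly both for periodic orbits, where the one‑dimensional flow direction must be accounted for, and for singularities.
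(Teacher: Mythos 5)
Your proof is correct and, for the intersection statements, follows essentially the same route as the paper: apply the gluing orbit property to the pair $(p,q)$ with shadowing times $\tau\to\infty$, use compactness of $M$ and of $[0,T(\vep)]$ to extract a limit point $z$ and a limit transition time $L$, and observe that the limit lies in $W^{uu}_\vep$ of a point of $\mathcal{O}(p)$ while $X_L(z)$ lies in $W^{ss}_\vep(q)$. The only cosmetic difference is bookkeeping: the paper shadows backwards from $p$ (taking $z_t$ with $d(X_{-s}(z_t),X_{-s}(p))<\vep$ directly), whereas you shadow forward and pass to the limit of the endpoint $X_{\tau_n}(y_{\tau_n})$ together with the moving base point $X_{\tau_n}(p)\to p^*$; both are valid, and your observation that the intersection persists for all $L'\ge L$ (needed to get a single $L(\vep)$ for both directions) matches the paper's remark that $X_{\tilde p_1}(W^{cu}_\vep)\subset X_L(W^{cu}_\vep)$.

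Where you genuinely diverge is the index equality. The paper's proof environment in fact only establishes the intersections; the equality of indices is extracted afterwards, in the body of the proof of Theorem~\ref{thm:robust}, by combining the proposition with the Kupka--Smale genericity and a quoted dimension lemma (\cite[Lemma~3.4]{AST}) stating that for a Kupka--Smale field an intersection $W^{cs}(p)\cap W^{cu}(q)\neq\emptyset$ forces $\dim W^{cs}(p)+\dim W^{cu}(q)>\dim M$. You instead carry out the transversality dimension count yourself, in both directions, for a Kupka--Smale $Y$ in the robustness neighbourhood, and transfer the conclusion back to $X$ via hyperbolic continuations. This makes your write-up more self-contained than the paper's (which, as stated, leaves the first assertion of the proposition unproved inside its own proof), and your count is correct, including the care taken with the extra flow direction for periodic orbits and the observation that the mixed periodic/singular and singular/singular cases actually yield contradictions rather than mere equality, consistently with the subsequent conclusion that the flow is a nonsingular Anosov flow.
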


\begin{proof}
Let $X\in \mathfrak X^1(M)$ satisfy the $C^1$-robust gluing orbit property and $p,q$ hyperbolic
critical elements for $X$. There are three cases to consider, depending on whether the critical elements
are periodic orbits  or singularities. We recall that the gluing orbit property implies transitivity and,
consequently, all periodic points and singularities are of saddle type.

Assume first that $p,q$ are hyperbolic periodic orbits.
Take $\vep>0$ and let $L(\vep)>0$ be given by the gluing orbit property.  Hence, for any
$t>0$ there are $0\le p_1(t)=p_1(t,p,q)  \le L(\vep)$ and $z_t=z(t,p,q) \in M$ so that
$$
d( X_{-s}(z_t)), X_{-s}(p) ) <\vep
	\quad\text{and}\quad
d( X_{s}  (X_{p_1(t)}(z_t)) , X_s(q) ) <\vep
$$
for every $s \in [0, t]$. By compactness of $[0,L(\vep)]$ one can take a subsequence $t_n\to\infty$
so that $p_1(t_n,p,q) \to \tilde p_1 \in [0,L(\vep)]$ as $n$ tends to infinite.
Up to consider a subsequence we may assume also that the sequence $(z(t_n,p,q))_{n\in \mathbb N}$ is convergent to some $z\in M$. This implies that
\begin{equation}\label{propeq1}
d( X_{-s}(z)), X_{-s}(p) ) \le \vep
	\quad\text{and}\quad
d( X_{s}  (X_{\tilde p_1}(z)) , X_s(q) ) \le \vep
\end{equation}
for every $s\in \mathbb R^+$, meaning that $z\in W^{cs}_\vep(\mathcal{O}(p)) \cap X_{\tilde p_1}(W^{cu}_\vep (\mathcal{O}(q))))$. Since  $0<\tilde p_1 \le L$ and
$ X_{\tilde p_1}(W^{cu}_\vep (\mathcal{O}(q))) \subset X_{L}(W^{cu}_\vep (\mathcal{O}(q)))$ this yields
$W^{cs}_\vep(\mathcal{O}(p)) \cap X_{L}(W^{cu}_\vep (\mathcal{O}(q)))\neq \emptyset$.
A similar argument (reverting the time) yields
$X_{L}(W^{cu}_\vep(\mathcal{O}(p))) \cap W^{cs}_\vep (\mathcal{O}(q)))\neq \emptyset$.

In the case that $p,q$ are both singularities then $\mathcal{O}(p)=p$ and $\mathcal{O}(q)=q$.
Proceeding as before we obtain as in the proof of ~\eqref{propeq1} we get that there exists $z\in M$
so that $d( X_{-s}(z)), p) <\vep$
and
$d( X_{s}  (X_{\tilde p_1}(z)) , q) <\vep$
for every $s\in \mathbb R^+$. This ultimately implies that $X_{L}(W_\vep^{uu}(p)) \cap W_\vep^{ss}(q) \neq \emptyset$.
Since $W_\vep^{cs}(p)=W_\vep^{ss}(p)$ and $W_\vep^{cu}(p)=W_\vep^{uu}(p)$, and analogous statements hold for $q$ then the
proposition follows in this second situation.

The proof of the proposition in the case that $p$ is a periodic orbit and $q$ is a singularity is completely
analogous to the previous ones and is left as an exercise to the reader.
\end{proof}

Now, to complete the proof of the theorem, assume that $X \in \mathfrak{X}^1(M)$ admits a $C^1$-open neighborhood
$\mathcal U \subset \mathfrak{X}^1(M)$ of vector fields $Y\in \mathcal U$ for which the
corresponding flows $(Y_t)_{t\in \mathbb R}$ satisfy the gluing orbit property.
Since every flow with the gluing orbit property is necessarily transitive then every $C^1$-vector field in
$\mathcal U$ generates a robustly transitive flow and so all periodic points and singularities are of saddle type.

It is well known that the set of Kupka-Smale flows (i.e. flows whose critical elements are hyperbolic and their stable and
unstable manifolds either do not intersect or intersect transversely) is $C^1$-generic in $\mathfrak{X}^1(M)$ (hence dense in
$\mathcal U$).
In particular, if $X \in \mathcal U$ is Kupka-Smale and $p,q$ are hyperbolic critical elements for $X$ such that
$\dim W^{cs}(p)+\dim W^{cu}(q) \le \dim M$ then $W^{cs}(p) \cap W^{cu}(q)=\emptyset$
(see Lemma~3.4 in ~\cite{AST}).
In view of Proposition~\ref{prop:index} the intersections $W^{cs}(p) \cap W^{cu}(q) \neq \emptyset$ are necessarily non-empty.
This implies hyperbolic singularities and hyperbolic periodic orbits for $X$ cannot coexist.

Since for $C^1$-generic vector fields the critical elements are dense (c.f. Pugh's general density theorem, see \cite{Pugh})
the critical elements of $X$ cannot be all singularities, since otherwise the vector field $X$ would be constant to zero, which
contradicts the robust transitiveness assumption. Thus $\text{Sing}(X)=\emptyset$ for any
$X\in \mathcal U$ and that the index of all hyperbolic periodic orbits is constant in a neighborhood of $X$.
We will make use of the following perturbation result.

\begin{lemma}\label{le:bifurca}
If $X\in \mathcal U$ and a periodic orbit of $X$ is not hyperbolic then there exists a $C^1$-arbitrarily close perturbation $Y\in \mathfrak{X}^1(M)$ displaying two hyperbolic periodic orbits of different index.
\end{lemma}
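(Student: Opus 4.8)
The plan is to deduce the lemma from the $C^1$ version of Franks' lemma for flows together with Pugh's general density theorem, following the scheme of \cite{AST}. Let $\gamma=\mathcal{O}(p)$ be the non-hyperbolic periodic orbit of $X$, of period $\tau$, fix $x\in\gamma$, and let $P\colon N_x\to N_x$ be the linear Poincar\'e return map on the normal bundle $N_x$ of $\gamma$ at $x$. That $\gamma$ fails to be hyperbolic means precisely that $P$ has an eigenvalue of modulus one; let $N_x=E^s\oplus E^c\oplus E^u$ be the splitting into generalized eigenspaces associated with the spectrum of $P$ inside, on, and outside the unit circle. The key tool is the flow version of Franks' lemma (see \cite{AST} and the references therein): any $C^1$-small perturbation of the linear Poincar\'e cocycle along $\gamma$ can be realized by a $C^1$-small perturbation of $X$ supported in an arbitrarily thin tubular neighborhood of $\gamma$ and keeping $\gamma$ a periodic orbit of the same period. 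Using it, I would first perturb $X$ to a vector field $X_0$, as $C^1$-close to $X$ as desired, whose linear Poincar\'e map along $\gamma$ is hyperbolic --- it suffices to push the modulus-one eigenvalues of $P$ slightly inside the unit circle --- and denote by $i$ the resulting stable index of $\gamma$. Choosing the perturbation small enough we keep $X_0\in\mathcal U$, so the flow of $X_0$ is transitive and $\Omega(X_0)=M$.

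Next I would invoke Pugh's general density theorem together with the $C^1$-genericity of the Kupka--Smale condition: since the flow of $X_0$ is transitive, so is that of any $C^1$-nearby vector field, and there is an arbitrarily small perturbation $X_1$ of $X_0$ all of whose periodic orbits are hyperbolic and dense in $M$. As $\gamma$ is hyperbolic for $X_0$ it has a hyperbolic continuation $\gamma$ for $X_1$ of the same index $i$, and, the periodic orbits being dense, $X_1$ has at least one further hyperbolic periodic orbit $\gamma'\neq\gamma$; let $j$ be its stable index. If $j\neq i$ we are done, taking $Y=X_1$. If $j=i$, pick a tubular neighborhood $T$ of $\gamma$ disjoint from the compact orbit $\gamma'$ and apply the flow Franks' lemma once more, with support inside $T$, to move one eigenvalue of the Poincar\'e map of $\gamma$ across the unit circle (first turning a conjugate pair into real eigenvalues near the circle, if necessary); this yields $Y$, still $C^1$-close to $X$, for which $\gamma$ is hyperbolic with stable index $i\pm 1$, while $\gamma'$ lies outside the support and remains hyperbolic of index $j=i$. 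In either case $Y$ exhibits two hyperbolic periodic orbits, $\gamma$ and $\gamma'$, of distinct stable indices, as claimed.

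The main obstacle, and the only point I would need to import rather than prove, is the flow version of Franks' lemma: realizing a prescribed $C^1$-small change of the linear Poincar\'e flow along $\gamma$ by an honest $C^1$-small perturbation of the vector field supported near $\gamma$ and preserving the closed orbit. In contrast to the diffeomorphism setting one has to argue with the linear Poincar\'e flow and control reparametrizations of time along $\gamma$, but this is standard and I would quote it from \cite{AST}. Beyond that, two routine matters need care: keeping the successive perturbations small enough that the first one lands inside the $C^1$-open set $\mathcal U$ (only $X_0$ needs this, for transitivity), and nesting their supports inside a fixed thin tube around $\gamma$ disjoint from $\gamma'$ so that $\gamma'$ genuinely persists with its index. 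Alternatively one could unfold the modulus-one eigenvalue of $P$ directly into a saddle-node-type pair of hyperbolic periodic orbits near $\gamma$, of consecutive indices, but this requires controlling the nonlinear part of the return map and is less transparent than the argument above.
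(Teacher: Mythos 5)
Your argument is correct, but it follows a genuinely different route from the one the paper intends. The paper's proof is that of \cite[Theorem~4.3]{AST} (after \cite{SSY}): a purely local bifurcation argument in which the flow version of Franks' lemma \cite[Lemma~1.3]{MSS} is used to make the return map of $\gamma$ linear on a small transversal with an eigenvalue equal to a root of unity, and the resulting segment of periodic points is then unfolded by one further explicit perturbation into \emph{two} hyperbolic closed orbits of consecutive stable indices, both created inside a thin tube around $\gamma$; in particular that version needs no hypothesis on $X$ beyond the existence of the non-hyperbolic closed orbit. You instead hyperbolize $\gamma$, import the second orbit from the ambient dynamics --- Kupka--Smale genericity, Pugh's general density theorem \cite{Pugh}, and the transitivity coming from $X\in\mathcal U$, which gives $\Omega=M$ and hence infinitely many periodic orbits --- and, if the indices coincide, flip the index of $\gamma$ by a second Franks perturbation supported away from $\gamma'$. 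This trades the control of the nonlinear part of the return map for a heavier use of the hypothesis $X\in\mathcal U$ and of two genericity theorems that the proof of Theorem~\ref{thm:robust} invokes anyway, so nothing circular occurs and the application of the lemma is unaffected. Two points you should make explicit. First, the order of quantifiers in the index flip: Franks' lemma realizes only $\delta(\epsilon)$-perturbations of the linear Poincar\'e maps by $\epsilon$-perturbations of the field, so you must fix $\delta(\epsilon/3)$ first and only then decide how far inside the unit circle the first perturbation pushes the critical eigenvalue and how close $X_1$ is taken to $X_0$, so that the eigenvalue of the continuation of $\gamma$ in $X_1$ is still within reach of the circle. Second, if you realize the crossing by composing the return map with a homothety $(1+2\delta)\,\mathrm{Id}$, the stable index can only decrease and decreases by at least one (whether the critical eigenvalue is real or a conjugate pair), so the two orbits of $Y$ do end up with distinct indices and no accidental cancellation can occur.
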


The proof of the previous lemma follows \emph{ipsis literis} the one of \cite[Theorem~4.3]{AST}
and relies on a version of Franks' lemma for flows (Lemma~1.3 in \cite{MSS}). Moreover, since the
robust weak specification assumption in \cite[Lemma~1.3]{MSS} is not used for the proof of the previous lemma
we shall omit its proof.
Now, since all hyperbolic periodic points for vector fields in $\mathcal U$ have the same index then it follows from
Lemma~\ref{le:bifurca} that every vector fields in $\mathcal U$ do not admit non-hyperbolic periodic points.
On the one hand, by Gan, Wen and Zhu \cite{GWZ}, every robustly transitive set which is strongly homogeneous of the same index is sectionally hyperbolic. On the
other hand, any sectionally hyperbolic flow without singularities is uniformly hyperbolic
(see~\cite{GW}). This implies that $X$ is a transitive Anosov flow and finishes the proof of
Theorem~\ref{thm:robust}.

 \subsubsection*{Proof of Theorem~\ref{thm:generic}}

We claim the existence of a $C^1$-residual subset $\mathcal R \subset \mathfrak{X}^1(M)$ so that any
 $X\in \mathcal R$ with the gluing orbit property generates an Anosov flow. Consider the
 $C^1$-residual subset $\mathcal R=\mathcal R_1 \cap \mathcal R_2$, where $\mathcal R_1$ denotes
 the  $C^1$-residual subset of Kupka-Smale vector fields and $\mathcal R_2$ denotes the $C^1$-residual subset given by Pugh's general density theorem. Since hyperbolic critical elements are dense and the index of all periodic points is constant (c.f. Proposition~\ref{prop:index}) then every $X \in \mathcal R$ admits no singularities.
We need the following auxiliary result.

\begin{lemma}\label{le:persistper} \cite[Lemma 5.1]{AST} There exists a residual subset $\mathcal{R}_3$ of $\mathfrak{X}^1(M)$ so that if $X\in \mathcal{R}_3$ is $C^1$-approximated by a sequence $(X_n)_n$ such that each
$X_n\in \mathfrak{X}^1(M)$ has two distinct hyperbolic periodic orbits, $p_n, q_n$ with different indices and
with $d(p_n,q_n)<\vep$, then there exist two distinct hyperbolic periodic points, $p,q$ for $X$ with
different indices and with $d(p,q)<2\vep$.
\end{lemma}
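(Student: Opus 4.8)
The plan is a routine Baire-category bookkeeping: organize the prescribed data into countably many $C^1$-open conditions and let $\mathcal R_3$ be the generic set of ``decision points'' for that family. Concretely, for each ordered pair of distinct integers $i\ne j$ in $\{0,1,\dots,\dim M-1\}$ and each rational $r>0$ I would set
\[
\mathcal{O}_{i,j,r}:=\{\, Y\in\mathfrak X^1(M):\ Y\ \text{has hyperbolic periodic orbits}\ \gamma,\gamma'\ \text{with}\ \text{ind}^s(\gamma)=i,\ \text{ind}^s(\gamma')=j,\ d(\gamma,\gamma')<r \,\},
\]
where $d(\gamma,\gamma')$ denotes the distance between the two orbits as compact subsets of $M$. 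The one genuinely non-formal point is to check that each $\mathcal{O}_{i,j,r}$ is $C^1$-open: a hyperbolic periodic orbit admits a hyperbolic continuation depending continuously on the vector field (implicit function theorem applied to the Poincaré return map, hyperbolicity being an open condition on the derivative of that map), the stable index is locally constant along such a continuation, distinct indices keep the two continuations distinct, and ``$d(\gamma,\gamma')<r$'' is an open condition. This is classical, so I expect no real difficulty here; it is the only ingredient of substance in the whole argument.

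Next I would put
\[
\mathcal{R}_3:=\bigcap_{i\ne j}\ \bigcap_{r\in\QQ_{>0}}\Big(\,\mathcal{O}_{i,j,r}\ \cup\ \big(\mathfrak X^1(M)\setminus\overline{\mathcal{O}_{i,j,r}}\big)\,\Big).
\]
For each triple $(i,j,r)$ the complement of $\mathcal{O}_{i,j,r}\cup(\mathfrak X^1(M)\setminus\overline{\mathcal{O}_{i,j,r}})$ is exactly $\overline{\mathcal{O}_{i,j,r}}\setminus\mathcal{O}_{i,j,r}=\partial\mathcal{O}_{i,j,r}$, which, being the frontier of an open set, is closed with empty interior; hence each set in the intersection is $C^1$-open and $C^1$-dense, and as the intersection is countable, $\mathcal{R}_3$ is $C^1$-residual. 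If convenient for later use one may further intersect $\mathcal{R}_3$ with the Kupka-Smale and Pugh residual sets without harm.

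It then remains to verify the stated property. Let $X\in\mathcal{R}_3$ and let $X_n\to X$ in $\mathfrak X^1(M)$ with each $X_n$ carrying hyperbolic periodic orbits $p_n,q_n$ of indices $i_n\ne j_n$ and $d(p_n,q_n)<\vep$. Only finitely many pairs $(i_n,j_n)$ occur, so after passing to a subsequence I may assume $i_n\equiv i$ and $j_n\equiv j$ with $i\ne j$ fixed. Pick any rational $r$ with $\vep<r<2\vep$ (this is where the slack is used: $\vep$ itself need not be rational). Then $X_n\in\mathcal{O}_{i,j,r}$ along the subsequence, hence $X\in\overline{\mathcal{O}_{i,j,r}}$, so $X\notin\mathfrak X^1(M)\setminus\overline{\mathcal{O}_{i,j,r}}$; since $X\in\mathcal{R}_3\subseteq\mathcal{O}_{i,j,r}\cup(\mathfrak X^1(M)\setminus\overline{\mathcal{O}_{i,j,r}})$, we are forced to have $X\in\mathcal{O}_{i,j,r}$. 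Thus $X$ itself possesses two hyperbolic periodic orbits of distinct indices $i,j$ at distance $<r<2\vep$, which is the conclusion. The main (indeed essentially the only) technical point of the whole proof is the openness of $\mathcal{O}_{i,j,r}$; the rest is the formal ``interior/closure'' dichotomy, and this is the reasoning used in \cite[Lemma~5.1]{AST}.
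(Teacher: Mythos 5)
Your argument is correct and complete; it is exactly the standard Baire-category argument (the ``$\mathcal O\cup(\mathfrak X^1(M)\setminus\overline{\mathcal O})$'' dichotomy applied to the countable family of open sets $\mathcal O_{i,j,r}$, with openness coming from the hyperbolic continuation of periodic orbits). The paper itself does not prove this lemma but only cites \cite[Lemma~5.1]{AST}, and your proof is the same argument used there, so there is nothing to add.
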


We claim that any $X \in \mathcal R \cap \mathcal R_3$
with the gluing orbit property generates a star flow, that is,
there exists an open neighborhood $\mathcal U$ of $X$ so that all critical elements of $Y\in \mathcal U$
are hyperbolic.  Assume, by contradiction, this is not the case. Then, there exists a sequence $X_n \to X$
(in the $C^1$-topology) and $x_n$ a non-hyperbolic critical element for the vector field $X_n$.
This, together with Lemma~\ref{le:bifurca}, implies that $X$ can be approximated bt a sequence $(\tilde X_n)_n$
of $C^1$-vector fields each of which exhibits a pair of periodic points $p_n,q_n$ with different index. By
Lemma~\ref{le:persistper}, $X$ has two periodic orbits of different index, which contradicts the fact that
all periodic points have the same index. This completes the proof of the theorem.

\section{From gluing to large deviations}\label{sec:LDP}

This section is devoted to the proof of our large deviations results (Theorems~\ref{thm:LB}, ~\ref{thm:Wad} and
~\ref{thm:LB2}).

\subsection{Reduction to the Poincar\'e map}

Given a suspension semiflow  $(X_{t})_{t\ge 0}$ over a base dynamics $f$ with roof function $\rho$, an $f$-invariant
probability measure $\mu$ and an observable $\psi : M_\rho \rightarrow \R$, consider the reduced observable
$\overline \psi : M \to \mathbb R$ given by
$
\overline{\psi}(x) := \int_{0}^{\rho(x)}\psi(X_{s}(x))ds
$
and the flow invariant probability measure  $\bar{\mu} := \frac{\mu \times \text{Leb}}{\int \rho d\mu}$.
The following lemma relates equilibrium states for $(X_t)_t$ with equilibrium states for $f$.

\begin{lemma}
Let $(X_t)_{t \ge 0}$ be a suspension semiflow over a continuous map $f: M \to M$ with a roof function
$\rho : M \to \mathbb R^+$ bounded away from zero. Given a potential $\phi: M_\rho \to \mathbb R$ the following
are equivalent:
\begin{itemize}
\item[(a)] 
$\mu_\phi= \mu_f \times Leb / \int \rho \, d\mu_f$ is an equilibrium state for  $(X_t)_t$ with respect to $\phi$
\item[(b)] $\mu_f$ is an equilibrium state for $f$
with respect to the potential $\bar \phi - P \rho$
\end{itemize}
where 
$P=P(\phi)$ denotes the topological pressure of the flow with respect to $\phi$.
\end{lemma}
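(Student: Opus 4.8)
The plan is to compute both sides of the defining variational problems and match them using the classical Abramov formula for entropy of suspension flows together with the identity $\int \phi\,d\bar\mu = \frac{1}{\int\rho\,d\mu_f}\int \bar\phi\,d\mu_f$. First I would recall that for any $f$-invariant probability $\mu$ on $M$, the associated flow-invariant measure $\bar\mu = (\mu\times\mathrm{Leb})/\int\rho\,d\mu$ satisfies the Abramov relation $h_{\bar\mu}(X_1) = h_\mu(f)/\int\rho\,d\mu$, and that the map $\mu\mapsto\bar\mu$ is a bijection between $\mathcal M_f$ and $\mathcal M_{X_1}$ (equivalently $\mathcal M_{(X_t)_t}$), with inverse obtained by integrating along the flow direction; this is standard for suspension semiflows with roof bounded away from zero. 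I would also record the change-of-variables identity $\int_{M_\rho}\phi\,d\bar\mu = \big(\int_M \bar\phi\,d\mu\big)\big/\big(\int_M\rho\,d\mu\big)$, which is immediate from Fubini and the definition $\bar\phi(x)=\int_0^{\rho(x)}\phi(X_s(x))\,ds$.

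Next I would unwind the definition of equilibrium state in (a). By \eqref{eq:equilibrium}, $\mu_\phi$ is an equilibrium state for $(X_t)_t$ with respect to $\phi$ iff $h_{\mu_\phi}(X_1)+\int\phi\,d\mu_\phi = \Ptop((X_t)_t,\phi) =: P$, i.e. iff $\mu_\phi$ realizes the supremum of $\mathcal M_{X_1}\ni\nu\mapsto h_\nu(X_1)+\int\phi\,d\nu$. Writing $\nu=\bar\eta$ for $\eta\in\mathcal M_f$ and applying the two identities above, this functional becomes
\[
h_{\bar\eta}(X_1)+\int\phi\,d\bar\eta
= \frac{h_\eta(f) + \int \bar\phi\,d\eta}{\int\rho\,d\eta}.
\]
So $\mu_\phi=\bar{\mu_f}$ is an equilibrium state for the flow iff $\mu_f$ maximizes $\eta\mapsto \big(h_\eta(f)+\int\bar\phi\,d\eta\big)\big/\int\rho\,d\eta$ over $\mathcal M_f$, and the maximal value is $P$.

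Finally, I would convert this last condition into statement (b). The point is the elementary equivalence: for a fixed constant $P$ and functions with $\int\rho\,d\eta>0$,
\[
\sup_{\eta\in\mathcal M_f}\frac{h_\eta(f)+\int\bar\phi\,d\eta}{\int\rho\,d\eta}= P
\quad\Longleftrightarrow\quad
\sup_{\eta\in\mathcal M_f}\Big(h_\eta(f)+\int(\bar\phi-P\rho)\,d\eta\Big)=0,
\]
and moreover $\eta$ attains the first supremum iff it attains the second; this follows by rearranging $h_\eta(f)+\int\bar\phi\,d\eta \le P\int\rho\,d\eta$ as $h_\eta(f)+\int(\bar\phi-P\rho)\,d\eta\le 0$, with equality preserved. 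Combining, $\mu_f$ is an equilibrium state for $f$ with respect to $\bar\phi-P\rho$ (with topological pressure $\Ptop(f,\bar\phi-P\rho)=0$) iff $\mu_\phi=\bar{\mu_f}$ is an equilibrium state for the flow. This gives both implications (a)$\Leftrightarrow$(b).

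**Main obstacle.** The only genuinely nontrivial input is the Abramov entropy formula and the bijectivity of $\mu\mapsto\bar\mu$ in the generality of a merely continuous map $f$ on a metric space with roof bounded away from zero; everything else is bookkeeping with the two integral identities. I would cite the standard reference for Abramov's formula rather than reprove it. A minor point to state carefully is that the supremum defining $P$ can equivalently be taken over $\mathcal M_{X_1}$ or over $\mathcal M_{(X_t)_t}$, since these coincide for $\mathbb{R}$-actions, so that the phrase "equilibrium state for $(X_t)_t$" in (a) is unambiguous.
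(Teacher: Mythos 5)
Your proposal follows essentially the same route as the paper's proof: both invoke the Abramov formula, the identity $\int\phi\,d\hat\eta=\int\bar\phi\,d\eta\,/\int\rho\,d\eta$, the correspondence $\eta\mapsto\hat\eta=(\eta\times\mathrm{Leb})/\int\rho\,d\eta$ between $f$-invariant and flow-invariant measures, and the rearrangement of $h_\eta(f)+\int\bar\phi\,d\eta\le P\int\rho\,d\eta$ into $h_\eta(f)+\int(\bar\phi-P\rho)\,d\eta\le 0$. The only slip is the parenthetical claim that $\mathcal M_{X_1}$ and the set of flow-invariant measures coincide---they do not in general, though the suprema defining the pressure over the two sets agree (by averaging $\nu\mapsto\int_0^1 (X_s)_*\nu\,ds$), so the argument is unaffected.
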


\begin{proof}
If $\mu_\phi$ is an equilibrium state for  $(X_t)_t$ with respect to $\phi$ it follows by equation~\eqref{eq:equilibrium} that
$$
h_{\mu_\phi} (X_1) + \int \phi \, d\mu_\phi
	= \sup_{\hat \eta \in \mathcal M_1 ((X_t)_t)}  \big\{  h_{\hat \eta} (X_1) + \int \phi \, d\hat\eta  \big\}
	=:P(\phi).
$$
Since $\rho$ is bounded away from zero there is a map between the space  $\{ \eta \in \mathcal{M}_\sigma :  \int \rho \, d\eta <\infty \}$ and the space of $(X_t)_t$ invariant probability measures via the map
$
\eta \mapsto \hat \eta:= \frac{(\eta \times \Leb)}{ \int \rho \, d\eta}.
$
It follows from a simple computation and the Abramov formula (see e.g.~\cite{Totoki}) that
\begin{equation}\label{induz}
\int \phi \, d\hat \eta = \frac{\int \bar \phi \, d\eta}{\int \rho \, d\eta}
	\quad\text{and}\quad
	h_{\hat\eta} (X_t) = \frac{|t| \, h_\eta (f)}{\int \rho \, d\eta}
\end{equation}
for every $t\in \mathbb R^+$ and every $(X_t)_t$-invariant probability measure $\hat \eta$. Thus, for any $(X_t)_t$
invariant probability measure $\hat\eta$ it holds that
\begin{align*}
0   \ge  - P(\phi) + h_{\hat \eta} (X_1) + \int \phi \, d\hat\eta
	 = \frac{- P(\phi) \, \int \rho \, d\eta + h_{\eta} (f) + \int \bar\phi \, d\eta }{ \int \rho \, d\eta}
\end{align*}
which is equivalent to the equation
$$
h_{\eta} (f) + \int (\bar\phi - P(\phi) \rho) \, d\eta  \le 0
$$
for every $f$-invariant probability measure $\eta$. Thus $\hat \eta$ is an equilibrium state for $(X_t)_t$ with respect to $\phi$
if and only if $\eta$ is an equilibrium state for $f$ with respect to $\phi - P(\phi) \rho$ and $\Ptop(f, \bar\phi - P(\phi) \rho)=0$.
This finishes the proof of the lemma.
\end{proof}

\begin{lemma}\label{lem1}
Let $(X_t)_{t\in \mathbb R^+}$ be a continuous semiflow a metric space $M$
and let $\psi : M \rightarrow \R$ be an observable.
Assume that either: (i) $M$ is compact and $\psi$ is continuous, or (ii) $\psi$ has tempered variation.
Given $(a , b) \subset \R$ and $\vep> 0$ there exists $\delta, t_{0}
> 0$ such that if  $\frac{1}{t}\int_{0}^{t}\psi(X_{s}(x))ds \in (a , b)$ and $t \geq t_{0}$
then
$$
\frac{1}{t}\int_{0}^{t}\psi(X_{s}(y)) \; ds \in (a - \vep , b +\vep)
	\quad\text{for every } y \in B(x , t, \delta).
$$
\end{lemma}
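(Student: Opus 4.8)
The plan is to reduce the statement to two cases according to the hypothesis and exploit that Bowen balls for the semiflow control time averages. I would first fix $(a,b)$ and $\vep>0$, and treat separately case (ii) where $\psi$ has tempered variation, and case (i) where $M$ is compact and $\psi$ is merely continuous.

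In case (ii), I would use directly Definition~\ref{def:tempered}: there is $\delta_0>0$ so that $\lim_{t\to\infty}\frac1t\gamma(\psi,t,\delta_0)=0$, where $\gamma(\psi,t,\delta_0)=\sup_{y\in B(x,t,\delta_0)}\bigl|\int_0^t \psi(X_s(x))-\psi(X_s(y))\,ds\bigr|$. Taking $\delta=\delta_0$, choose $t_0$ large enough that $\frac1t\gamma(\psi,t,\delta)<\vep$ for all $t\ge t_0$; then for any $y\in B(x,t,\delta)$ one has
\[
\Bigl|\frac1t\int_0^t\psi(X_s(y))\,ds-\frac1t\int_0^t\psi(X_s(x))\,ds\Bigr|\le \frac1t\gamma(\psi,t,\delta)<\vep,
\]
so if the average over $x$ lies in $(a,b)$, the average over $y$ lies in $(a-\vep,b+\vep)$.

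In case (i), $M$ is compact and $\psi$ is continuous, hence uniformly continuous. Here I would argue that tempered variation actually holds automatically for continuous observables on a compact space, because uniform continuity gives, for each $\vep>0$, a $\delta>0$ with $|\psi(u)-\psi(v)|<\vep$ whenever $d(u,v)<\delta$; then for $y\in B(x,t,\delta)$ we have $d(X_s(x),X_s(y))<\delta$ for all $s\in[0,t]$, so the integrand $|\psi(X_s(x))-\psi(X_s(y))|$ is pointwise $<\vep$, giving $\frac1t\gamma(\psi,t,\delta)\le\vep$ for \emph{every} $t$. Choosing this $\delta$ and any $t_0>0$ then yields the claim exactly as in case (ii). The same uniform-continuity estimate covers case (i) without appeal to the tempered variation machinery at all, so the two cases in fact merge; the only reason to keep them separate is that in (ii) $M$ need not be compact and $\psi$ need not be uniformly continuous.

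I do not expect a serious obstacle here: the statement is essentially a restatement of the definition of tempered variation combined with the elementary observation that Bowen balls keep the two orbit segments uniformly close. The one point requiring a small amount of care is the passage from $\frac1t\gamma(\psi,t,\delta)<\vep$ to the conclusion, namely checking that subtracting at most $\vep$ from an average in $(a,b)$ lands it in $(a-\vep,b+\vep)$, and handling the degenerate case where $b-a\le\vep$ so that $(a,b)$ might a priori be larger than one wants — but since we enlarge both endpoints the inclusion still holds. I would also note that the value $\delta$ produced depends only on $\psi$ and $\vep$ (not on $x$), which is what later arguments in Section~\ref{sec:LDP} will need.
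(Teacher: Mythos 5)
Your proposal is correct and follows essentially the same route as the paper: case (i) via uniform continuity of $\psi$ on the compact space $M$ giving a pointwise bound on $|\psi(X_s(x))-\psi(X_s(y))|$ inside the Bowen ball, and case (ii) via a direct appeal to the definition of tempered variation to choose $\delta$ and $t_0$ with $\frac1t\gamma(\psi,t,\delta)<\vep$ for $t\ge t_0$. Your added remark that uniform continuity makes the compact case a special instance of tempered variation (with $t_0$ arbitrary) is a harmless unification that the paper does not spell out but which changes nothing of substance.
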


\begin{proof}
In case (i), since $\psi$ is continuous and $M$ is compact then it is
uniformly continuous. Given $\vep>0$ arbitrary let $\delta_0>0$ be such that $|\psi(x)-\psi(y)|<\vep$ for
every $y\in B(x,\delta_0)$. Thus, for any $t\ge 0$, $0<\delta<\delta_0$ and
$ y \in B(x , t, \delta)$ it holds that
$$
\left| \frac{1}{t}\int_{0}^{t}\psi(X_{s}(y))ds - \frac{1}{t}\int_{0}^{t}\psi(X_{s}(x))ds \right|
	 \le  \frac{1}{t}\int_{0}^{t} | \psi(X_{s}(y)) - \psi(X_{s}(x)) | ds
	 < \vep,
$$
which proves the lemma in this first case. In case (ii), since $\psi$ has tempered variation, for any $\vep>0$
there exists $\delta > 0$ and $t_{0} > 0$ large such that
$\Big|\frac{1}{t}\int_{0}^{t}\psi(X_{s}(y))ds - \frac{1}{t}\int_{0}^{t}\psi(X_{s}(x))ds \Big|  \leq \vep$ for every $t\ge t_0$
and $y\in B(x,t,\delta)$. The proof now follows analogously as before.
\end{proof}

If $M$ is a compact space, the space $\mathcal M(M)$ of probability measures on $M$ endowed with the weak$^*$-topology is a compact metrizable space. Given a countable and dense
subset $(g_i)_{i \in \mathbb N}$ of continuous observables with $\|g_i\|=1$ for every $i \in \mathbb N$ consider
the metric $\tilde d$ on $\mathcal M(M)$ given by
\begin{equation*}\label{eq:metrizable}
\tilde d (\eta_1, \eta_2)
	:= \sum_{i \in \mathbb N} \frac{1}{2^i} \big|  \int g_i \, d\eta_1 - \int g_i \, d\eta_2 \big|.
\end{equation*}
Observe that $\tilde d$ is invariant by translation (i.e. $\tilde d (\eta_1+\eta_3, \eta_2+\eta_3) = \tilde d (\eta_1, \eta_2)$ for all
probabilities $\eta_1,\eta_2, \eta_3$) and that the function $\tilde d(\cdot, \eta)$ is convex
for any fixed probability measure $\eta$.

\begin{lemma}\label{lem2}
Let $(X_t)_{t\in \mathbb R}$ be a continuous flow on a compact metric space $M$
and let $\tilde d$ be the previously defined metric. 
Given $\vep> 0$ there exists $\delta > 0$ such that
$$
\tilde d\Big(\frac{1}{t}\int_{0}^{t}\delta_{X_{s}(y)}ds \;,\; \frac{1}{t}\int_{0}^{t}\delta_{X_{s}(x)}ds \Big) < \vep
	\quad\text{for every $y \in B(x , t, \delta)$ and $t\ge 0$.}
$$
\end{lemma}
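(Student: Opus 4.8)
The plan is to estimate $\tilde d$ of the two empirical measures by controlling each coordinate of the sum defining $\tilde d$. First I would fix $\vep>0$ and choose $N\in\mathbb N$ large enough that $\sum_{i>N} 2^{-i+1} < \vep/2$; this takes care of the tail of the series using only the uniform bound $\|g_i\|=1$, since for any two probability measures $\eta_1,\eta_2$ one has $|\int g_i\,d\eta_1 - \int g_i\,d\eta_2| \le 2\|g_i\| = 2$. So the tail contributes at most $\vep/2$ regardless of $x,y,t$.

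For the finitely many remaining coordinates $i=1,\dots,N$, each $g_i$ is a continuous function on the compact metric space $M$, hence uniformly continuous. So there is $\delta_i>0$ such that $|g_i(u)-g_i(v)|<\vep/2$ whenever $d(u,v)<\delta_i$; set $\delta=\min\{\delta_1,\dots,\delta_N\}$. Then for $y\in B(x,t,\delta)$ we have $d(X_s(x),X_s(y))<\delta\le\delta_i$ for all $s\in[0,t]$, and therefore
\begin{align*}
\Big| \int g_i \, d\Big(\tfrac1t\int_0^t \delta_{X_s(y)}ds\Big) - \int g_i \, d\Big(\tfrac1t\int_0^t \delta_{X_s(x)}ds\Big) \Big|
	&= \Big| \tfrac1t\int_0^t \big(g_i(X_s(y)) - g_i(X_s(x))\big)\, ds \Big| \\
	&\le \tfrac1t\int_0^t |g_i(X_s(y)) - g_i(X_s(x))|\, ds
	< \tfrac{\vep}{2}.
\end{align*}
Here I am using the defining property of the barycenter/empirical measure, namely $\int g\,d\big(\frac1t\int_0^t\delta_{X_s(x)}ds\big) = \frac1t\int_0^t g(X_s(x))\,ds$, which is just Fubini.

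Summing up, for $y\in B(x,t,\delta)$ and any $t\ge0$,
$$
\tilde d\Big(\tfrac1t\int_0^t \delta_{X_s(y)}ds, \tfrac1t\int_0^t \delta_{X_s(x)}ds\Big)
	\le \sum_{i=1}^{N}\frac{1}{2^i}\cdot\frac{\vep}{2} + \sum_{i>N}\frac{1}{2^i}\cdot 2
	< \frac{\vep}{2} + \frac{\vep}{2} = \vep,
$$
which is the claim. There is no real obstacle here: the only mild point is the standard split of the infinite sum into a uniformly small tail and a finite head handled by uniform continuity, together with the observation that this $\delta$ depends only on $\vep$ (through $N$ and the moduli of continuity of $g_1,\dots,g_N$) and not on $t$ or $x$, which is exactly what the statement requires. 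The argument is the continuous-time and vector-valued analogue of Lemma~\ref{lem1}(i).
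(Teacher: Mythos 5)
Your proof is correct and follows essentially the same route as the paper's: the paper simply cites the uniform continuity of the embedding $M \ni x \mapsto \delta_x$ into $(\mathcal M(M),\tilde d)$ and then applies the convexity (Jensen-type) inequality $\tilde d\big(\frac1t\int_0^t\delta_{X_s(y)}ds,\frac1t\int_0^t\delta_{X_s(x)}ds\big)\le\frac1t\int_0^t\tilde d(\delta_{X_s(y)},\delta_{X_s(x)})\,ds$, whereas you unpack both steps explicitly via the tail-plus-head splitting of the series and a termwise Fubini computation. The content is the same; your version just makes the two black boxes of the paper's one-line argument transparent.
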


\begin{proof}
Since the map $M \ni x \mapsto \delta_{x}$ is uniformly continuous, for any $\vep > 0$ there exists
$\delta > 0$ so that if $d(x, y) < \delta$ then $\tilde d(\delta_{x},\delta_{y}) < \vep$. Hence, if 
$y \in B(x,n,\vep)$ we have
$
\tilde d\Big(\frac{1}{t}\int_{0}^{t}\delta_{X_{s}(y)}ds \;,\; \frac{1}{t}\int_{0}^{t}\delta_{X_{s}(x)} \; ds \Big)
	\leq \frac{1}{t}\int_{0}^{t} \tilde d(\delta_{X_{s}(x)} \,,\, \delta_{X_{s}(y)} ) \; ds
	< \vep.
$
\end{proof}

The remaining of this section is devoted to two results on distance and entropy approximation of invariant measures
by ergodic ones. Recall  
the entropy of an invariant measure $\mu$ for the flow $(X_t)_{t\in \mathbb R} $
as the entropy $h_\mu(X_1)$ of the time-$1$ map (see e.g. \cite{BR75}).
The first result is a consequence of the ergodic decomposition theorem,
whose proof can be found e.g. in \cite[Lemma 2.11]{AB11}. 

\begin{lemma}\label{lemAB}
Let $f: M \to M$ be a continuous map on a metric space $M$.
Let $\eta$ be an $f$-invariant probability measure  and $\psi , \phi : M \rightarrow \R$ be functions in $L^{1}(\nu)$.
Given $\vep > 0$ there exists
$\eta_{1}, \ldots ,\eta_{n}$ $f$-invariant and ergodic probabilities and $a_1, \dots, a_n >0$,
with $\sum_{i = 1}^{n}a_{i} = 1$, such that
(i) $|\int\psi d\eta - \int\psi \; d (\sum_{i=1}^{n}a_{i}\eta_{i}) | < \vep$;
(ii) $|\int\phi d\eta - \int\phi \; d (\sum_{i=1}^{n}a_{i}\eta_{i})| < \vep$
and
(iii) $h_{\eta} (f) \le  \sum_{i=1}^{n} \, a_{i} h_{\eta_{i}}(f)) + \vep$
\end{lemma}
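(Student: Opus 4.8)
The plan is to reduce everything to the classical fact that any invariant probability measure can be approximated in the weak$^*$ topology, and in entropy, by convex combinations of ergodic measures via the ergodic decomposition theorem. Concretely, given the $f$-invariant measure $\eta$, let $\{\eta_\omega\}_{\omega\in\Omega}$ denote its ergodic decomposition, so that $\eta=\int \eta_\omega\, d\tau(\omega)$ for an appropriate probability $\tau$ on the simplex of ergodic measures. Both linear functionals $\mu\mapsto \int\psi\,d\mu$ and $\mu\mapsto\int\phi\,d\mu$ are affine in $\mu$, and by affinity of Kolmogorov--Sinai entropy on the space of invariant measures (the Jacobs theorem / affinity of $\mu\mapsto h_\mu(f)$), we also have $h_\eta(f)=\int h_{\eta_\omega}(f)\,d\tau(\omega)$ whenever $h_\eta(f)<\infty$.

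The key steps, in order, are: (1) apply the ergodic decomposition and write $\int\psi\,d\eta=\int\big(\int\psi\,d\eta_\omega\big)d\tau(\omega)$, and similarly for $\phi$ and for entropy; here one uses that $\psi,\phi\in L^1(\eta)$ so the inner integrals are $\tau$-integrable, and that entropy is affine so $h_\eta(f)=\int h_{\eta_\omega}(f)\,d\tau(\omega)$; (2) approximate the integral over $\Omega$ by a finite Riemann-type sum: partition $\Omega$ into finitely many pieces $\Omega_1,\dots,\Omega_n$ of positive $\tau$-measure so that the three functions $\omega\mapsto\int\psi\,d\eta_\omega$, $\omega\mapsto\int\phi\,d\eta_\omega$ and $\omega\mapsto h_{\eta_\omega}(f)$ are nearly constant (or at least have small oscillation on average) on each piece; (3) on each piece $\Omega_j$ pick a representative ergodic measure $\eta_j$ (a point of near-average value), set $a_j=\tau(\Omega_j)$, and check that the convex combination $\sum_j a_j\eta_j$ satisfies (i), (ii) and the inequality (iii). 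For the entropy bound it is enough to get the one-sided estimate $h_\eta(f)\le\sum_j a_j h_{\eta_j}(f)+\vep$, which is slightly easier than a two-sided approximation and is all that is claimed.

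A clean way to arrange step (2)--(3) for the entropy part, avoiding measurability headaches with $\omega\mapsto h_{\eta_\omega}(f)$ (which is only measurable, not continuous), is this: since $h_\eta(f)=\int h_{\eta_\omega}(f)\,d\tau(\omega)$, choose $N$ large and split according to the value of $h_{\eta_\omega}(f)$ lying in dyadic-type intervals $[k\vep/2,(k+1)\vep/2)$ for $k=0,1,\dots,N$ together with a tail set $\{h_{\eta_\omega}(f)\ge N\vep/2\}$ of small $\tau$-measure (using integrability to make the tail contribute less than $\vep/2$ to $h_\eta(f)$); on each such piece of positive measure pick $\eta_j$ with $h_{\eta_j}(f)$ within $\vep/2$ above the infimum on that piece. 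Refining this partition further to also control $\int\psi\,d\eta_\omega$ and $\int\phi\,d\eta_\omega$ (again using $L^1$-integrability and Lusin/Egorov-type arguments, or simply partitioning the product of the ranges of all three functions into small boxes) yields the required $\eta_1,\dots,\eta_n$ and weights $a_i=\tau(\Omega_i)$.

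The main obstacle is exactly the measurability and integrability bookkeeping for the entropy function $\omega\mapsto h_{\eta_\omega}(f)$: one must know that it is $\tau$-measurable and $\tau$-integrable with integral equal to $h_\eta(f)$, i.e. the affinity/ergodic-decomposition formula for entropy in the possibly non-compact, merely continuous-map setting. This is standard when $h_\eta(f)<\infty$ (the only case of interest, since otherwise the inequality (iii) is vacuous or the statement is trivially witnessed); when $h_\eta(f)=+\infty$ one can take a single ergodic $\eta_1$ with arbitrarily large entropy and the other conditions are arranged trivially, or one simply restricts, as the paper does, to the finite-entropy situation. Modulo this point the argument is the routine Riemann-sum approximation of the ergodic decomposition, which is why the excerpt cites \cite[Lemma 2.11]{AB11} for the full details.
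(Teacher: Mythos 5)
Your proposal is correct and takes essentially the same route as the paper, which simply cites \cite[Lemma 2.11]{AB11} for this lemma and proves the companion Lemma~\ref{lemAB2} by exactly the argument you outline: ergodic decomposition, affinity of entropy, a finite partition of the space of ergodic components, and representatives weighted by $a_i=\tau(\Omega_i)$ (your level-set partition is in fact the right adaptation to the non-compact, $L^1$ setting of the present statement, whereas the written proof of Lemma~\ref{lemAB2} partitions $\cM(M)$ and uses compactness). Two small caveats: your aside on the case $h_\eta(f)=+\infty$ is not right as stated (a single ergodic measure of large finite entropy neither gives (iii) when $h_\eta(f)=\infty$ nor automatically gives (i)--(ii)), though your fallback of restricting to finite entropy is the intended reading; and on the unbounded tail pieces of the $\psi$- and $\phi$-level-set partitions the representative must be chosen by a Chebyshev/averaging argument so that $a_j\bigl|\int\psi\,d\eta_j\bigr|$ is dominated by $\int_{\Omega_j}\bigl|\int\psi\,d\eta_\omega\bigr|\,d\tau(\omega)$, since an arbitrary point of such a piece could ruin the two-sided estimates (i)--(ii).
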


A more general approximation result, from which the later follows immediately and that considers
the weak$^*$ topology, is as follows:

\begin{lemma}\label{lemAB2}
Let $f: M \to M$ be a continuous map on a compact metric space $M$.
Let $\eta$ be an $f$-invariant probability measure and $\tilde d$ be the usual metric in the weak$^{\star}$-topology.
Given $\vep > 0$ there exists $\eta_{1}, \ldots ,\eta_{n}$ $f$-invariant and ergodic probabilities
and $a_1, \dots, a_n >0$, with $\sum_{i = 1}^{n}a_{i} = 1$, such that (i)
$\tilde d(\eta , \sum_{i=1}^{n}a_{i}\eta_{i} ) < \vep$
and  (ii) $h_{\eta} (f) \le  \sum_{i=1}^{n} \, a_{i} h_{\eta_{i}}(f)) + \vep$.
\end{lemma}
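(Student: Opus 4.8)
The plan is to deduce Lemma~\ref{lemAB2} from the ergodic decomposition theorem together with the affinity of the metric entropy along ergodic decompositions (Jacobs' theorem). First I would fix the given $f$-invariant probability $\eta$ and write its ergodic decomposition $\eta = \int_{\mathcal M_e(f)} \omega \, d\tau(\omega)$, where $\mathcal M_e(f)\subset\mathcal M(M)$ is the Borel set of $f$-invariant ergodic probabilities and $\tau$ is a Borel probability on $\mathcal M_e(f)$; by Jacobs' theorem the map $\omega \mapsto h_\omega(f)$ is Borel measurable and $h_\eta(f) = \int h_\omega(f)\,d\tau(\omega)$. The point, which makes the argument work despite the lack of continuity of the entropy map, is that the entropy function is still \emph{measurable}, so one may partition the ergodic measures \emph{simultaneously} according to their location in the weak$^*$ topology \emph{and} according to the value of their entropy, and then replace each cell by a single ergodic measure picked inside it.

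Concretely, I would fix parameters $\delta>0$ and $\vep'>0$, partition $\mathcal M(M)$ into finitely many Borel sets $B_1,\dots,B_N$ of $\tilde d$-diameter less than $\delta$ (possible since $(\mathcal M(M),\tilde d)$ is compact), and partition $[0,\infty)$ into the intervals $I_k=[k\vep',(k+1)\vep')$. Setting $Q_{j,k}:=\mathcal M_e(f)\cap B_j\cap\{\omega: h_\omega(f)\in I_k\}$, for each pair $(j,k)$ with $\tau(Q_{j,k})>0$ I would choose an ergodic measure $\eta_{j,k}\in Q_{j,k}$ and put $a_{j,k}:=\tau(Q_{j,k})$. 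Since $\eta$ is the convex combination, with weights $a_{j,k}$, of the normalized barycenters $a_{j,k}^{-1}\int_{Q_{j,k}}\omega\,d\tau$, and since each $\eta_{j,k}$ and $\tau$-a.e.\ $\omega\in Q_{j,k}$ lie in the same $B_j$, the explicit series defining $\tilde d$ together with Jensen's inequality applied cell by cell gives $\tilde d\big(\eta,\sum_{j,k}a_{j,k}\eta_{j,k}\big)\le\delta$. On the other hand, on $Q_{j,k}$ one has $k\vep'\le h_\omega(f)<(k+1)\vep'$ while $h_{\eta_{j,k}}(f)\ge k\vep'$, so $\int_{Q_{j,k}}h_\omega(f)\,d\tau\le a_{j,k}\big(h_{\eta_{j,k}}(f)+\vep'\big)$; summing over $(j,k)$ yields $h_\eta(f)\le\sum_{j,k}a_{j,k}h_{\eta_{j,k}}(f)+\vep'$. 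Taking $\delta=\vep'=\vep$ completes the argument, and Lemma~\ref{lemAB} follows at once, since $\tilde d$-closeness forces closeness of the integrals of any fixed continuous observable.

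The step requiring care is the \emph{finiteness} of the index set: a priori there are countably many intervals $I_k$, so I must ensure that only finitely many cells $Q_{j,k}$ carry positive $\tau$-mass. When $h_\eta(f)<\infty$ (the only case relevant to our applications) this is handled by one preliminary truncation: choose $R>0$ with $\int_{\{h_\omega(f)\ge R\}}h_\omega(f)\,d\tau<\vep/4$ (which also makes $\tau(\{h_\omega(f)\ge R\})$ small), replace the part of the decomposition carried by $\{h_\omega(f)\ge R\}$ by a single fixed ergodic measure with the appropriate weight (this moves $\eta$ by at most $\vep/2$ in $\tilde d$ and lowers the total entropy by at most $\vep/4$), and then run the partition argument only on the remaining piece, where $h_\omega(f)<R$ and hence only finitely many $I_k$ occur. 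The remaining verifications, namely the inequality $\tilde d\big(\sum_j c_j\mu_j,\sum_j c_j\nu_j\big)\le\sum_j c_j\,\tilde d(\mu_j,\nu_j)$ for finite convex combinations and the cell-wise Jensen estimate, are immediate from the definition of $\tilde d$ and the convexity of each $\tilde d(\cdot,\eta)$, so I would dispatch them in a line.
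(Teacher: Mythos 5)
Your proposal is correct and follows essentially the same route as the paper: ergodic decomposition plus affinity of the entropy (Jacobs' theorem), a finite partition of the compact space $\cM(M)$ into cells of small $\tilde d$-diameter, a choice of ergodic representatives weighted by the mass of their cells, and the convexity of $\tilde d$ for part (i). The only difference is in the bookkeeping for part (ii): where you additionally partition by entropy level and truncate the high-entropy tail to keep the index set finite, the paper simply picks in each weak$^*$ cell $P_i$ a representative $\eta_i$ with $h_{\eta_x}(f)\le h_{\eta_i}(f)+\vep$ for almost every component $\eta_x\in P_i$, which sidesteps the truncation.
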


\begin{proof}
Let $\eta$ be an $f$-invariant probability measure.
By ergodic decomposition theorem and convexity of the
entropy function (see e.g.~\cite{Wal}), we can write $ \eta = \int \eta_x \,d\eta(x) $ and
$h_\eta(f) = \int h_{\eta_x}(f) \,d\eta(x),$ where each $\eta_x$
denotes an ergodic component of $\eta$.
Take a small finite partition $\cP$ of
the space $\cM(M)$ of invariant probability measures
with diameter smaller than $\vep$.
Set $n=\#\cP$ and $a_i=\eta(\{x\in M : \eta_x \in P_i\})$ for every element $P_i$ in $\cP$. For every $1\leq i
\leq n$ pick an ergodic measure $\eta_i=\eta_{x_i} \in P_i$
satisfying $h_{\eta_x}(f) \leq h_{\eta_i}(f)+\vep$ for
$\eta$-almost every $\eta_x\in P_i$.
Part (i) in the lemma is immediate. On the other hand, (ii) follows
because
$$
h_\eta(f)
        = \int h_{\eta_x}(f) \,d\eta(x) \leq \sum_{i=1}^n a_i \,h_{\eta_i}(f)+\vep
        = h_{\hat\eta}(f)+\vep.
$$
Finally, by convexity of the metric $\tilde d$ we get
$$
\tilde d \Big( \int \eta_x  \;d\eta(x),  \sum_{i=1}^n a_i \eta_i \Big)
	= \tilde d \Big( \sum_{i=1}^n  \int_{P_i} \eta_x  \;d\eta(x),  \sum_{i=1}^n a_i \eta_i \Big)
        \leq \vep.
$$
This finishes the proof of the lemma.
\end{proof}

\subsection{Proof of the Theorem \ref{thm:LB}}
We prove the upper and lower bounds separately. We will need to recall some necessary notions.
Given $t, \vep>0$ we say that a set $E \subset M$ is a \emph{$(t,\vep)$-separated set} for the flow if
$\max_{s\in [0,t]} d(X_s(x), X_s(y)) >\vep$ for any $x\neq y\in E$. We say that $E$ is a maximal
$(t,\vep)$-separated set if it is a separated set with maximal cardinality (exist by compactness of $M$).
Similarly, given $n\in\mathbb N$ and $\vep>0$, a set $E\subset M$ is  \emph{$(n,\vep)$-separated}
if $\max_{0 \le j \le n} d(X_j(x), X_j(y)) >\vep$ for any $x\neq y\in E$.

\subsubsection{Upper bound}

\noindent The proof of the upper bound combines the method for estimating large deviations for the time-$1$ map $X_1$, potential $\phi_1=\int_0^1 \phi\circ X_s\, ds$ and observable $\psi_1=\int_0^1 \psi\circ X_s \, ds$, with an argument to construct
flow invariant measures with pressure at least as large as the pressure of any given $X_1$-invariant probability measure.
Given $T>0$ let $B_T$
denote the set of points $x\in M$ so that $\frac1T \int_0^T \psi(X_s(x)) \in [a,b]$. We observe that 
$$
\limsup_{T\to\infty} \frac1T \log \mu(B_T)
	= \limsup_{n \to\infty} \frac1n \log \mu(B_n)
$$
and $B_n$ is the set of points $x\in M$ for which $\frac1n S_n \psi_1(x) \in [a,b]$, where
$S_n\psi_1= \sum_{j=0}^{n-1} \psi_1\circ X_j$.
If $E_n\subset B_n$ is a maximal $(n,\vep)$-separated set for the flow then $B_n \subset \bigcup_{x\in E_n} B(x,n,2\vep)$
and it follows from the Gibbs property ~\eqref{eq:Gibbss} that
\begin{equation*}\label{eq.upper}
\mu(B_n)
	\le  K_n(\vep) e^{-n P_\mu} \, \sum_{x \in E_n} e^{\int_0^n \phi(X_s(x)) \, ds}
	= K_n(\vep) e^{-n P_\mu} \, \sum_{x \in E_n}  e^{S_n \phi_1(x)}
\end{equation*}
for every $n\ge 1$. Thus
\begin{align*}
\limsup_{n \to\infty} \frac1n \log \mu(B_n)
	\le  - P_\mu
	+ \limsup_{n\to\infty} \frac1n \log Z_n,
\end{align*}
where $Z_n=\sum_{x \in E_n} e^{S_n\phi_1(x)}$. Now, given $\vep>0$, 
by uniform continuity of $X_t$ for $t\in [0,1]$ there exists
$\zeta \in (0,1)$ so that any $(n,\vep)$-separated set for the flow is $(n,\zeta \vep)$-separated set for
the time one map $X_1$. Thus $ E_n$ is a $(n,\zeta \vep)$-separated set for
the time one map $X_1$.
%
Following \cite{You90}, consider the probability measures $\si_n$ and $\eta_n$ given by
$$
\si_n
   =\frac{1}{Z_n} \sum_{x \in  E_n} e^{S_n \phi_1 (x)} \, \delta_x
   \quad\text{and}\quad
\eta_n
    = \frac{1}{n}  \sum_{j=0}^{n-1} (X_j)_* \si_n.
$$
Clearly, any  weak$^*$ accumulation point $\eta$ of the sequence $(\eta_n)_{n\in\mathbb N}$ is
an $X_1$-invariant probability measure. Let $\cP$ be a partition of $M$ with diameter smaller
than $\zeta \vep$ and $\eta(\partial\cP)=0$. By construction every element of $\cP^{(n)}=\bigvee_{j=0}^{n-1} X_{-j}(\mathcal P)$
contains at most one point of $E_n$.
Thus
$$
H_{\si_n}(\cP^{(n)}) - \int S_n \phi_1 \,d\si_n
    = \log \Big( \sum_{x\in E_n} e^{S_n \phi_1} \Big)
$$
which, as in the usual proof of the variational principle (c.f.\cite[Pages 219-221]{Wal}), guarantees that
\begin{equation*}\label{eq.upper2}
\limsup_{n \to \infty}
	    \frac{1}{n} \log  \sum_{x \in  E_n} e^{ \sum_{j=0}^{n-1} \phi_1 (X_j(x))}
    \leq h_\eta(f) + \int \phi_1 \, d\eta.
\end{equation*}
 Observe also that $\int \psi_1 \,d\eta \in  [a,b]$ by weak$^*$ convergence, because $E_n$ is contained in $B_n$ and
$$
 \int \psi_1 \,d\eta_n
        = \frac{1}{n}  \sum_{j=0}^{n-1} \frac{1}{ Z_n} \sum_{x \in E_n} e^{S_n \phi_1(x)} \psi_1(X_j(x))
        \in [a,b].
$$
The probability measure $\tilde \eta:= \int_0^1 (X_s)_* \eta \, ds$ is clearly flow
invariant and each probability measure $(X_s)_* \eta$ is $X_1$-invariant with the same entropy as $\eta$. Thus
$$
h_{\tilde \eta} (X_1)
	= \int_0^1 h_{(X_s)_* \eta} (X_1) \, ds
	= h_{\eta} (X_1)
	\quad\text{and}\quad
\int \phi \, d\tilde \eta
	= \int \int_0^1 \phi\circ X_s \, ds \, d \eta
	= \int \phi_1 \, d \eta.
$$
This yields that
$
h_\eta(f) + \int \phi_1 \, d\eta \le h_{\tilde \eta}(f) + \int \phi_1 \, d\tilde \eta.
$
Since
$
\int \psi \,d \tilde \eta = \int \int_0^1 \psi (X_s(x)) \,d \eta = \int \psi_1 (x)) \,d \eta  \in  [a,b],
$
this finishes the proof of the first part of the theorem.

\subsubsection{Lower bound}\label{1lb}

Set $f=X_1$ as the time-1 map of the flow $(X_t)_{t\in\mathbb R}$, $(a,b)\subset \mathbb R$ be an
open interval and $\phi,\psi$ be bounded observables with tempered variation. Given $t>0$ consider the set
$$
D_{t}  := \Big\{x \in M : \frac{1}{t}\int_{0}^{t}\psi(X_{s}(x)) ds \in (a , b)\Big\}.
$$
Given any $f$-invariant probability measure $\nu$ satisfying $\int \psi d\nu \in (a , b)$ and $\vep > 0$
we claim that there exists $t_1>0$ so that
$$
\mu(D_{t}) \geq \exp t\Big[h_{\nu}(X_{1}) + \int \phi d\nu - P_{\mu} - \vep]
$$
for every $t \ge t_1$. Since this claim, together with
\begin{align*}
 \inf\Big\{
 	& P_\mu - h_{\nu}(X_1) - \int\phi \, d\nu  : \nu \text{ is } X_{1}\text{-invariant and }
	 \int \psi \, d\nu \in (a , b) \Big\} \\
	 & \leq
	 \inf\Big\{P_\mu - h_{\nu}(X_1) - \int\phi \, d\nu  : \nu \text{ is } (X_{t})_t\text{-invariant and }
	 \int \psi \, d\nu \in (a , b) \Big\},
\end{align*}
implies the statement of the theorem we are left to prove it.

Fix $\nu$ as above and $\vep>0$ arbitrary. Take $\vep_0 := \frac{1}{6} \min\{\vep , |a - \int\psi d\nu| , |b - \int\psi d\nu|\}$
and let $\nu_{1}, \ldots, \nu_{n}$ be $X_{1}$-invariant and ergodic probability measures
so that $\hat \nu = \sum_{i=1}^{n}a_{i}\nu_{i}$ is $\vep_0$-approximating $\nu$
in the sense of (i)-(iii) in Lemma \ref{lemAB}.
For any $i=1\dots n$ consider the sets
\begin{align*}
E_{t}^{i}:= \Big\{x \in M :
	& \Big|\frac{1}{\lfloor a_{i}t\rfloor}\int_{0}^{\lfloor a_{i}t\rfloor}\psi(X_{s}(x))ds -
	\int\psi d\nu_{i}\Big| < \vep_0 \\
	& \&  \Big|\frac{1}{\lfloor a_{i}t\rfloor}\int_{0}^{\lfloor a_{i}t\rfloor}\phi(X_{s}(x))ds - \int\phi d\nu_{i}\Big| < \vep_0\Big\},
\end{align*}
where $\lfloor a_{i}t\rfloor$ denotes the integer part of $a_i t$.

Using Birkhoff's ergodic theorem and that entropy can be computed via separated sets
(c.f. \cite{Katok,Wal}), there are $t_{1} > 0$,  $0 < \vep_{1},\vep_{2} \leq \vep_0$ and a maximal
$(\lfloor a_{i}t\rfloor , \vep_{2})$-separated set $N_{t}^{i} =\{x_{t,1}^{i}, \ldots, x_{t,m_{t}^{i}}^{i}\}$
of cardinality $m_{t}^{i} \geq \exp\big[ \lfloor a_{i}t\rfloor(h_{\nu_{i}}(f) - \vep_{1})\big]$ for every $t\ge t_1$
and $i=1\dots n$.
Up to increase $t_1$ if necessary, Lemma \ref{lem1} guarantees that there exists $0< \delta \leq \vep_{2}$ small so
that $B(x , t , \delta) \subset D_{t}$ for every  $x \in D_{t}$ and $t>t_1$.

By construction $N_{t}^{i}$ is a $(\lfloor a_{i}t\rfloor , \delta)$-separated set.
We now make use of the gluing orbit property at scale $\frac\delta4$. Indeed,  for any $1 \leq j_{i} \leq m_{t}^{i}$, with $i = 1, \ldots,n$, by the gluing orbit property
one can pick  $y \in M$ that shadows the pieces of orbits of the points $x^1_{t, j_1}, x^2_{t, j_2}, \dots, x^n_{t, j_n}$,
for $ 1\le j_i \le m_{t}^{i}$ within a distance $\frac\delta4$, by times $\lfloor
a_{i}t\rfloor$ and with jump times  $p_{1}, \ldots, p_{n-1} \leq T(\frac\delta4)$ between each shadowing segment.
Let $Y_{t}$ be the set of all such choices of points $y$.
Since $\psi$ has tempered variation we may assume $\delta>0$ is small so that $C_\delta(\psi)<\vep_0/6$
(recall the definition in equation~\eqref{eq:distort}). 

\begin{lemma}\label{lem5}
If $t_1>0$ is large then $Y_{t} \subset D_{t + nT(\frac\delta4)}$ for every $t \geq t_{1}$.
\end{lemma}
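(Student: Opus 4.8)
The plan is to fix $y\in Y_t$ and control the ergodic average $\frac{1}{t+nT(\delta)}\int_0^{t+nT(\delta)}\psi(X_s(y))\,ds$ by splitting the interval $[0,\,t+nT(\delta)]$ into the $n$ shadowing windows furnished by the gluing orbit property, the $n-1$ short gaps between consecutive windows, and one final leftover window. The goal is to show the average lies within $4\vep_0$ of $\int\psi\,d\nu$; since $\vep_0\le\frac16\min\{|a-\int\psi\,d\nu|,|b-\int\psi\,d\nu|\}$ and $\int\psi\,d\nu\in(a,b)$, that places it in $(a,b)$, i.e.\ $y\in D_{t+nT(\delta)}$.

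First I would set up the decomposition. By construction $y$ $\delta$-shadows $x^1_{t,j_1}$ during $[0,\lfloor a_1t\rfloor]$, and after a gap of length $p_1\le T(\delta)$ the point $\underline x_2=X_{\lfloor a_1t\rfloor+p_1}(y)$ $\delta$-shadows $x^2_{t,j_2}$ during $[0,\lfloor a_2t\rfloor]$, and so on up to $\underline x_n$. Writing $\tau:=\sum_{i=1}^n\lfloor a_it\rfloor+\sum_{i=1}^{n-1}p_i$ for the total length of these windows and using $\sum_ia_i=1$, one has $t-n\le\tau<t+nT(\delta)$, so both $|t-\tau|$ and the leftover length $t+nT(\delta)-\tau$ are bounded by a constant depending only on $n$ and $T(\delta)$. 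On the $n-1$ gaps and on the leftover window the integral of $\psi(X_s(y))$ contributes at most $n(1+T(\delta))\|\psi\|_\infty$ in absolute value, a quantity independent of $t$.

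Next I would estimate each shadowing window. On the $i$-th one, $\int_0^{\lfloor a_it\rfloor}\psi(X_s(\underline x_i))\,ds$ differs from $\int_0^{\lfloor a_it\rfloor}\psi(X_s(x^i_{t,j_i}))\,ds$ by at most $\gamma(\psi,\lfloor a_it\rfloor,\delta)$, which is $\le\frac{\vep_0}{6}\lfloor a_it\rfloor$ once $t_1$ is large (by tempered variation of $\psi$, using $a_i>0$ and finiteness of $n$), while $\int_0^{\lfloor a_it\rfloor}\psi(X_s(x^i_{t,j_i}))\,ds$ is within $\vep_0\lfloor a_it\rfloor$ of $\lfloor a_it\rfloor\int\psi\,d\nu_i$ because $x^i_{t,j_i}\in E^i_t$ by construction. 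Summing over $i$, replacing each $\lfloor a_it\rfloor$ by $a_it$ at a cost of at most $n\|\psi\|_\infty$, using $\hat\nu=\sum_ia_i\nu_i$ together with $|\int\psi\,d\nu-\int\psi\,d\hat\nu|<\vep_0$ from Lemma~\ref{lemAB}(i), adding back the gap and leftover contributions, and finally dividing by $t+nT(\delta)$ (noting $\tau/(t+nT(\delta))\to1$), one arrives at
$$
\Big|\frac{1}{t+nT(\delta)}\int_0^{t+nT(\delta)}\psi(X_s(y))\,ds-\int\psi\,d\nu\Big|\ \le\ 4\vep_0+\frac{R}{t+nT(\delta)}\ \le\ 5\vep_0
$$
for all $t\ge t_1$, after enlarging $t_1$ so that $R/(t_1+nT(\delta))\le\vep_0$, where $R=R(n,T(\delta),\|\psi\|_\infty)$ collects the finitely many $t$-independent errors. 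Since $5\vep_0<6\vep_0\le\min\{|a-\int\psi\,d\nu|,|b-\int\psi\,d\nu|\}$, this forces the average into $(a,b)$.

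The main obstacle, and essentially the only content of the argument, is the bookkeeping: one must carefully separate the genuinely small errors that are $O(\vep_0t)$ — the tempered-variation shadowing defect on each window, the deviation of $x^i_{t,j_i}\in E^i_t$ from $\nu_i$, and the approximation $\hat\nu\approx\nu$ — from the $O(1)$ errors coming from the floor functions, from the $n-1$ gaps of length at most $T(\delta)$, and from the leftover window, and then check that after dividing by $t+nT(\delta)$ the latter are absorbed uniformly over $y\in Y_t$ once $t_1$ is large. This works precisely because $n$, $T(\delta)$, $\|\psi\|_\infty$ and the weights $a_i>0$ are all fixed before $t_1$ is chosen, so no new idea beyond the gluing orbit property and tempered variation is needed.
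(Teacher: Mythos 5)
Your proposal is correct and follows essentially the same route as the paper: the same decomposition of $[0,t+nT(\delta)]$ into shadowing windows, gaps and a leftover piece, with the tempered variation of $\psi$ controlling the shadowing defect on each window, membership in $E^i_t$ controlling the deviation from $\int\psi\,d\nu_i$, Lemma~\ref{lemAB}(i) controlling $|\int\psi\,d\nu-\int\psi\,d\hat\nu|$, and the $O(1)$ gap/leftover/floor errors absorbed after dividing by $t+nT(\delta)$ for $t_1$ large. Your bookkeeping of the constants (ending at $5\vep_0<6\vep_0$) is in fact more careful than the paper's, which asserts the final average lies within $\vep_0$ of $\int\psi\,d\nu$ without tallying the error terms explicitly; both versions land inside $(a,b)$ thanks to the factor $\tfrac16$ in the definition of $\vep_0$.
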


\begin{proof}
Take $y \in Y_{t}$ and let $x^1_{t, j_1}, x^2_{t, j_2}, \dots, x^n_{t, j_n}$ be the points that determined the choice of $y$.
Splitting the pieces of the orbit of $y$ up to time $t+n T(\frac\delta4)$ according to its shadowing paths of size $\lfloor a_{i}t\rfloor$ and their complements,  and setting $p_0=a_0=0$, then
\begin{align*}
\int_{0}^{t+nT(\frac\delta4))}
	\psi(X_{s}(y)) ds
	&
=
\sum_{i=1}^{n}\int_{0}^{\lfloor a_{i}t\rfloor}\psi(X_{s +  \sum_{j = 0}^{i-1} (\lfloor a_{j}t\rfloor + p_{j}) }(y)) \, ds \\
& +
\sum_{i = 1}^{n}
	\int_{\lfloor a_{i}t\rfloor + \sum_{j = 1}^{i-1} (\lfloor a_{j}t\rfloor + p_{j}) }^{\sum_{j = 1}^{i} (\lfloor a_{j}t\rfloor + p_{j})}
	\psi(X_{s  }(y))ds \nonumber\\
 &  + \int_{\sum_{j = 1}^{n} ( \lfloor a_{j}t\rfloor + p_{j} ) }^{t + n T(\frac\delta4))}\psi(X_{s  }(y))ds \nonumber
 \end{align*}
 where the first term in the right hand sum differs from $\sum_{i=1}^{n}\int_{0}^{\lfloor a_{i}t\rfloor}\psi(X_{s}(x_{t,j_{i}}^{i}))ds$
 by at most $\frac{\vep_0}{6} (t + nT(\frac\delta4)))$ by the tempered variation property of $\psi$ and choice of $\delta$.
Using that $p_{1}, \ldots, p_{n-1} \leq T(\frac\delta4))$ (with $T(\frac\delta4))$ independent of $t$) up to consider a larger $t_{1}>0$
the sum of the two last summands in the right hand side is bounded above by $2 \|\psi\|_{L^\infty} n T(\frac\delta4))$.
Finally, using $|\int\psi d\nu - \int\psi d (\sum_{i=1}^{n}a_{i}\nu_{i})| < \vep_0$, $x_{t,j_{i}}^{i} \in E_{t}^{i}$, a simple computation
using the tempered variation condition for $\psi$ yields that
%
%
 %
\begin{align*}
\frac{1}{\lfloor t+nT(\frac\delta4)) \rfloor}\int_{0}^{t+nT(\frac\delta4))}
	\psi(X_{s}(y)) ds
	\in \big( \int \psi  \, d\nu -\vep_0,  \int \psi  \, d\nu+\vep_0 \big)
	\subset (a, b)
\end{align*}
for every $t \ge t_1$, proving the lemma.
\end{proof}

 We claim that there exists $C>0$ (depending on the vector field $X$, $n$ and $\delta$)
and a subset $\tilde Y_t \subset Y_t$ with cardinality larger or equal to
$
C\cdot  \prod_{i=1}^n \exp\big[ \lfloor a_{i}t\rfloor(h_{\nu_{i}}(f) - \vep_{1})\big]
$
such that the family of dynamical balls $\{B(y , t +nT(\frac\delta4), \frac{\delta}{4}) \}_{y \in \tilde Y_{t}}$ is a disjoint family of
subsets of $D_{t + nT(\frac\delta4)}$ for every $t \geq t_{1}$.
Recall that each $y\in Y_t$ is determined by points $x^1_{t, j_1}, x^2_{t, j_2}, \dots, x^n_{t, j_n}$ ($1 \leq j_{i} \leq m_{t}^{i}$),
by the shadowing times $\lfloor a_{i}t\rfloor$ and by gluing times $0\le p_{1}, \ldots, p_{n-1} \leq T(\frac\delta4)$
between each shadowing segment (we also write $p_i=p_i(y)$ to emphasize these are functions of the underlying points
and times).
Let $t(\delta,n)>0$ be 
so that $\max_{s\in [0,n \, t(\delta,n)]} d_{C^0}(X_s,Id) <\frac{\delta}4$ and write
\begin{equation}\label{eq:decompos}
[0,T(\frac\delta4)] = \bigcup_{s=0}^{N(\delta,n)-1} I_s
	\; \bigcup \; I_{N(\delta,n)}
\end{equation}
where $N(\delta,n)=\big[\frac{T(\frac\delta4)}{t(\delta,n)}\big]$ denotes the integer part of $\frac{T(\frac\delta4)}{t(\delta,n)}$,
$I_s = [s t(\delta,n), (s+1) t(\delta,n)[$ for $0\le s \le N(\delta,n)-1$, and the last interval $I_{N(\delta,n)}$
has also size bounded by $t(\delta,n)$ and may be reduced to a point.
By the pigeonhole principle (recall \eqref{eq:decompos}), for every $1\le i \le n-1$
there exists $0\le s_i \le N(\delta,n)$
so that the set
\begin{equation}\label{eq:estYt}
\tilde Y_t := \{ y \in Y_t \colon \; p_i(y) \in I_{s_i} \; \text{for every }\; 1\le i \le n-1 \}
\end{equation}
has cardinality at least $ C \cdot \# Y_t$ where $C=\frac{1}{N(\delta,n)^{n-1}}$.
The family $\{B(y , t +nT(\frac\delta4), \frac{\delta}{4}) \}_{y \in \tilde Y_{t}}$ is disjoint. Indeed,
if $y_1\neq y_2 \in \tilde Y_t$, there are $1\le i \le n$ and points $x_1\neq x_2 \in N^i_t$
so that
$$
d(X_{\ell + \sum_{j=0}^{i-1} \lfloor a_j t\rfloor + p_j(y_1) }(y_1), X_\ell (x_1)) < \frac\delta4
	\quad\text{and}\quad
d(X_{\ell + \sum_{j=0}^{i-1} \lfloor a_j t\rfloor + p_j(y_2)}(y_2), X_\ell (x_2)) < \frac\delta4
$$
for $\ell \in [0, \lfloor a_i t\rfloor]$. Moreover, as points in $N^i_t$ are $(\lfloor a_{i}t\rfloor,\delta)$-separated,
there exists $s_i\in [0,\lfloor a_{i}t\rfloor]$ such that
$
d(X_{s_i}(x_1), X_{s_i}(x_2)) >\delta.
$
Thus, by triangular inequality and the fact that $\sum_{j=0}^{i-1} |p_j(y_1)-p_j(y_2)|<n \, t(\delta,n)$, we conclude that
$
d(X_{\tilde s_i}(y_1),X_{\tilde s_i}(y_2)
	> \frac\delta4
$
where $0\le \tilde s_i := s_i + \sum_{j=0}^{i-1} \lfloor a_j t\rfloor + p_j(y_1) \le t+ nT(\frac\delta4)$. This proves the claim.

Finally, we estimate $\mu(D_{t + n T(\frac\delta4)})$, for every $t \geq
t_{1}$. Estimates similar to the ones of the previous lemma yield that
$
\frac{1}{{t + nT(\frac\delta4)}} \int_{0}^{t + nT(\frac\delta4)}\phi(X_{s}(y)) ds
\in \Big( \int \phi \,d\nu -\vep_0  ,  \int \phi \,d\nu + \vep_0 \Big)
$
for all $ t \geq t_{1}$. Thus,
\begin{align}
\mu ( D_{t + n T(\frac\delta4)}  )
	& \geq \sum_{y \in \tilde Y_{t}}\mu\Big( B(y , t + nT(\frac\delta4), \frac{\delta}{2}) \Big) \\
	& \geq \frac{1}{K_{t + n T(\frac\delta4)}(\frac{\delta}{2})} \sum_{y \in \tilde Y_{t}}
	\exp \Big[ \int_{0}^{t + nT(\frac\delta4)} \phi(X_{s}(y))ds - (t + nT(\frac\delta4)P_{\mu} \Big]  \nonumber \\
       & \geq \frac{1}{K_{t + n T(\frac\delta4)}(\frac{\delta}{2})} \# \tilde Y_{t} \cdot \exp
\Big[ (t + nT(\frac\delta4)) (\int\phi d\nu - \vep_0) - (t +
nT(\frac\delta4)P_{\mu} \Big] \nonumber\\
        & \geq \frac{C}{K_{t + n T(\frac\delta4)}(\frac{\delta}{2})} \exp\Big[ \sum_{i = 1}^{n}\lfloor a_{i}t\rfloor(h_{\nu_{i}} - \vep_0) + (t + nT(\frac\delta4)) (\int\phi d\nu - \vep_0) - (t + nT(\frac\delta4))P_{\mu} \Big]. \nonumber
\end{align}
Since $|h_{\nu}(f) - h_{\sum_{i = 1}^{n}a_{i}\nu_{i}}(f) |< \vep_0$ and
$\lim_{t \to \infty}\frac{1}{t}\log K_{t}(\frac{\delta}{2}) =  0$, one can take $t_1>0$ large so that the claim holds.
This completes the proof of the theorem.

\subsection{Proof of Theorem~\ref{thm:LB2}}

Since this proof has similar ingredients to the one of Subsection~\ref{1lb} we shall concentrate on the main differences.
Fix an open set $V$ in the space of all probabilities in $M$ and, for $t>0$, consider the set
$$
D_{t}  := \Big\{x \in M : \frac{1}{t}\int_{0}^{t}\delta_{X_{s}(x)} ds \in V\Big\}.
$$
Take a $X_{1}$-invariant probability measure $\nu \in V$ and $\vep > 0$.
We claim that there exists $t_1>0$ so that
$$
\mu(D_{t}) \geq \exp t\Big[h_{\nu}(X_{1}) + \int\phi d\nu - P_{\mu} - \vep\Big]
$$
for every $t \ge t_1$, which will imply the theorem.
Fix $\nu$ as above and $\vep>0$ arbitrary.

Take $0<\hat{\vep} \leq \vep$ be such that $B_{\tilde d}(\nu, \hat{\vep}) \subset V$ (the ball is taken with respect to the
metric $\tilde d$) and set $\vep_0 := \frac{\hat{\vep} }{6}$.
Let $\nu_{1}, \ldots, \nu_{n}$ be the $X_{1}$-invariant and ergodic probability measures so that $\hat \nu = \sum_{i=1}^{n}a_{i}\nu_{i}$ is $\vep_0$-approximating $\nu$ in the sense of (i)-(ii) in Lemma \ref{lemAB2}.  By Birkhoff's ergodic theorem there
exists $t_1>0$ so that the sets
%
\begin{align*}
E_{t}^{i}:= \Big\{x \in M :
	& \; \tilde d \Big( \frac{1}{\lfloor a_{i}t\rfloor}\int_{0}^{\lfloor a_{i}t\rfloor} \delta_{X_{s}(x)} ds, \nu_{i} \Big) < \vep_0 \\
	& \&  \Big|\frac{1}{\lfloor a_{i}t\rfloor}\int_{0}^{\lfloor a_{i}t\rfloor}\phi(X_{s}(x))ds - \int\phi \, d\nu_{i}\Big| < \vep_0\Big\},
\end{align*}
(where $\lfloor a_{i}t\rfloor$ denotes the integer part of $a_i t$)  satisfy $\nu_i(E_{t}^{i}) \ge \frac12$ for every
$t\ge t_1$ and $i=1\dots n$.
Again, since entropy can be computed via separated sets (c.f. \cite{Katok}), up to increase $t_{1} > 0$ if necessary, there
are  $0 < \vep_{1},\vep_{2} \leq \vep_0$ and a maximal
$(\lfloor a_{i}t\rfloor , \vep_{2})$-separated set $N_{t}^{i} =\{x_{t,1}^{i}, \ldots, x_{t,m_{t}^{i}}^{i}\}$
of cardinality $m_{t}^{i} \geq \exp\big[ \lfloor a_{i}t\rfloor(h_{\nu_{i}}(f) - \vep_{1})\big]$ for every $t\ge t_1$
and $i=1\dots n$.
Lemma \ref{lem2} guarantees that there exists $0< \delta \leq \vep_{2}$ so that
$$
\tilde d\Big(\frac{1}{t}\int_{0}^{t}\delta_{X_{s}(y)}ds \;,\; \frac{1}{t}\int_{0}^{t}\delta_{X_{s}(x)}ds \Big) < \frac{\vep_0}{6}
$$
for every $y \in B(x , t, \delta)$. Up to increase $t_1$ if necessary, 
this implies that there exists $0< \delta \leq \vep_{2}$ small so that $B(x , t , \delta) \subset D_{t}$ for
every  $x \in D_{t}$ and $t>t_1$.

We now make use of the gluing orbit property
for the scale $\delta>0$. Indeed,  for any $1 \leq j_{i} \leq m_{t}^{i}$, with $i = 1, \ldots,n$, by the gluing orbit property
one can pick  $y \in M$ that shadows the pieces of orbits of the points $x^1_{t, j_1}, x^2_{t, j_2}, \dots, x^n_{t, j_n}$,
for $ 1\le j_i \le m_{t}^{i}$ within a distance $\delta$, by times $\lfloor
a_{i}t\rfloor$, respectively, and with jump times  $p_{1}, \ldots, p_{n-1} \leq T(\delta)$ between each shadowing segment.
Denote the set of all such choices of points $y$ as the set $Y_{t}$. Since $\tilde d(\cdot, \nu)$ is a convex function then
the same ideas as in Lemma~\ref{lem5} are enough to prove that $Y_{t} \subset D_{t + nT(\delta)}$ for every $t \geq t_{1}$.

Observe the sets $N_{t}^{i}$ are $(\lfloor a_{i}t\rfloor , \vep_{2})$-separated and
$0<\delta<\vep_2$.  The same argument used in the proof of Theorem~\ref{thm:LB} implies that
there exists $C>0$ (depending on the vector field $X$, $n$ and $\delta$)
and a subset $\tilde Y_t \subset Y_t$ with cardinality larger or equal to
$
C\cdot  \prod_{i=1}^n \exp\big[ \lfloor a_{i}t\rfloor(h_{\nu_{i}}(f) - \vep_{1})\big]
$
such that the family of dynamical balls $\{B(y , t +nT(\frac\delta4), \frac{\delta}{4}) \}_{y \in \tilde Y_{t}}$ is a disjoint family of
subsets of $D_{t + nT(\frac\delta4)}$ for every $t \geq t_{1}$
 and
one can estimate $\mu(D_{t + n T(\frac\delta4)})$ similarly to the proof of Lemma \ref{lem5}
(using that $\phi$ has tempered variation and it is bounded):
\begin{align}
\mu(D_{t+nT(\frac\delta4)})
	& \geq \sum_{y \in \tilde{Y}_{t}}\mu\Big( B(y , t + nT(\frac\delta4), \frac{\delta}{2}) \Big) \\
	& \geq \frac{1}{K_{t + n T(\frac\delta4)}(\frac{\delta}{2})} \sum _{y \in \tilde{Y}_{t}}
	\exp \Big[ \int_{0}^{t + nT(\frac\delta4)} \phi(X_{s}(y))ds - (t + nT(\frac\delta4))P_{\mu} \Big]  \nonumber \\
       & \geq \frac{1}{K_{t + n T(\frac\delta4)}(\frac{\delta}{2})} \#\tilde{Y}_{t} \cdot \exp
\Big[ (t + nT(\frac\delta4)) (\int\phi d\nu - \vep_0) - (t +nT(\frac\delta4))P_{\mu} \Big] \nonumber\\
        & \geq \frac{C}{K_{t + n T(\frac\delta4)}(\frac{\delta}{2})} \exp\Big[ \sum_{i = 1}^{n}\lfloor a_{i}t\rfloor(h_{\nu_{i}} - \vep_0) + (t + nT(\frac\delta4)) (\int\phi d\nu - \vep_0) - (t + nT(\frac\delta4))P_{\mu} \Big]. \nonumber
\end{align}
This proves the claim since $|h_{\nu}(f) - h_{\sum_{i = 1}^{n}a_{i}\nu_{i}}(f) |< \vep_0$ and
$\lim_{t \to \infty}\frac{1}{t}\log K_{t}(\frac{\delta}{2}) =  0$.

\subsection{Proof of Theorem~\ref{thm:Wad}}
Let $\sigma: \Sigma \to \Sigma$ be a subshift of finite type, $\rho :\Sigma \rightarrow \R$ be a H\"older continuous roof function
and $(X_{t})_{t}$ be the suspension flow generated by $\sigma$ and $\rho$. Assume
$\phi: \Sigma_{\rho}\to \mathbb R$ is a continuous 
potential so that $\mu_{\phi}$ is the unique equilibrium state for $(X_{t})_{t}$ with respect to $\phi$
and is a Gibbs measure. Applying the Theorem \ref{thm:gluingAA} we have that $(X_{t})_{t}$ has the gluing orbit property.
So, by compactness of $\Sigma_\rho$ and continuity of the observable $\psi: \Sigma_{\rho}\to\mathbb R$ it follows from Theorem~\ref{thm:LB} the following level-1 large deviations principle
\begin{align*}
\limsup_{t\to +\infty} \frac1t \log \mu_\phi \Big( x\in \Sigma_{\rho} : \frac{1}{t}\int_{0}^{t} \psi(X_s(x)) \, ds \in [a , b] \Big)
    \le -\inf_{s \in [a , b]} I(s)
\end{align*}
and
\begin{align*}
\liminf_{t\to +\infty} \frac1t \log \mu_\phi \Big( x\in \Sigma_{\rho} : \frac{1}{t}\int_{0}^{t} \psi(X_s(x)) \, ds \in (a , b) \Big)
    \ge -\inf_{s \in (a , b)} I(s)
\end{align*}
with
$I(s) =\sup \{ P_\mu - h_{\nu}(X_1) - \int\phi \, d\nu  : \nu \text{ is } (X_{t})_t\text{-invariant and } \int \psi \, d\nu =s  \}$.
Finally, we observe that for every $(X_t)_t$-invariant probability measure $\nu$ there exists a unique
$\sigma$-invariant probability measure so that $\nu=(\eta \times \Leb)/\int \rho \, d\eta$ (see e.g. \cite{BR75}).
By the Abramov formulas we get
$$
h_\nu(X_1)= \frac{h_\eta(\sigma)}{\int\rho d\eta}
	\quad\text{and}\quad
	 \int \phi \, d\nu =\frac{\int \overline{\phi} \, d\eta}{\int\rho d\eta}
$$
and, using that $\mu$ is an equilibrium state for $\phi$, it follows that $P_\mu=\Ptop((X_{t})_{t} , \phi)$. This finishes
the proof of the theorem.

\section{Criteria for gluying orbit properties}\label{sec:criteria}

\subsection{The Bowen-Walters distance}

Before proving the criteria for suspension semiflows to satisfy gluying orbit properties we
recall the Bowen-Walters distance for the suspension semiflows.
Assume that $(M,d)$ is a metric space, $f: M \to M$ is a continuous map, $\rho : M \to \mathbb R^+_0$
is a roof function and $(X_t)_{t\ge 0}$ is the suspension semiflow over $f$ acting on the space $M_\rho$
introduced in Subsection~\ref{subsec:suspension}.
If $\rho\equiv 1$ is constant
equal to one then define a \emph{horizontal distance} for points in $M \times \{t\}$ by
$$
d_h((x,t),(y,t))
    = (1-t) d (x,y) + t d(f(x),f(y))
$$
and a define a \emph{vertical distance} for points for $(x,t)$ in the orbit of $(y,s)$ by
$$
d_v((x,t),(y,s))
    = \inf \{ |r| \colon X_r(x,t)=(y,s)\}.
$$
Then, the Bowen-Walters distance $d_1((x,t),(y,s))$ is defined as the infimum of the length of paths connecting
$(x,t)$ and $(x,s)$. For an arbitrary roof function $\rho$ the Bowen-Walters distance is defined,
for every $(x,t),(x,s) \in M$, by
$$
d_\rho((x,t),(x,s)) := d_1((x,t/\rho(x)),(y,s/\rho(y))).
$$
Although this is a very natural metric, it is also hard to explicitly compute balls and dynamical balls with respect to Bowen-Walters distance. If $f$ is invertible with both $f$ and $f^{-1}$ Lipschitz,
and the roof function $\rho$ is bounded away from zero and also Lipschitz continuous
then it follows from Barreira and Saussol ~\cite[Appendix]{BS} that there exists $K>0$ so that
\begin{equation}\label{eq:metric.equivalence}
K^{-1} d_\pi ( (x,t),(y,s) )
    \le d_\rho((x,t),(x,s))
    \le K d_\pi ( (x,t),(y,s) )
\end{equation}
for any $(x,t),(y,s)\in M$, where $d_\pi$ is the pseudo-distance
\begin{equation}\label{eq:dist}
d_\pi ( (x,t),(y,s) )
    = \min\Bigg\{
    \begin{array}{l}
    d(x,y) + |s-t|, \\
    d(f(x),y) +\rho(x)- t+s, \\
    d(x,f(y)) +\rho(y) -s +t
    \end{array}
    \Bigg\}.
\end{equation}

\subsection{Proof of Theorem~\ref{thm:gluingAA}}\label{subsecClaim}
Assume that $f: M \to M$ satisfies the  gluing orbit property 
and that the roof function $\rho: M \to \mathbb R_0^+$ is bounded
above and below.
Let $\vep>0$ be arbitrary and fixed. Take points $(x_1,s_1), (x_2,s_2), \dots, (x_k,s_k) \in M_\rho$ and times
$t_1, \dots, t_k \ge 0$ arbitrary. Given any $1\le i \le k$, let $n_i=n_i(x_i, s_i, t_i) \in \mathbb N_0$ be
determined by the equation
\begin{equation}\label{eq:turns}
\sum_{j=0}^{n_i-1} \rho(f^j(x_i))
    \le s_i + t_i
    < \sum_{j=0}^{n_i} \rho(f^j(x_i)).
\end{equation}
Using that $\rho$ is uniformly continuous and satisfies condition \eqref{eq:distort},
there exists $0<\xi<\vep/3$ be small so that $\xi  + C_\xi <\frac{\vep}{3}$,
that $C_\xi < \frac\vep{3} \inf_{x\in M} \rho(x)$
and
$|\rho(z) - \rho(w)| < \frac{\vep (\inf \rho)^2}{3 \sup \rho}$
for all $d(z,w) < \xi$.
%

Now we shall use the
 gluing orbit property  
for $f$ with the proximity $\xi$. More precisely, if
 $N(\xi)$ is given by the  gluing orbit property  
 for $f$ then there exists $x\in M$ that shadows
 the pieces of the orbits of the points $x_i$ during $n_i + 1$ iterates with a time lag of
  at most  $N(\xi)$ iterates. 
More precisely, there are $\tilde{p}_i \le N(\xi)$, $1\le i \le k$, and $x\in M$ so that
$
d (f^j(x),f^j(x_1) ) \leq \xi
$
for every $0\leq j \leq n_1 +1$ and
$
d (f^{j+n_1+\tilde{p}_1+\dots +n_{i-1}+\tilde{p}_{i-1} +(i-1)}(x) \,,\, f^j(x_i) )
        \leq \xi
$
for every $2\leq i\leq k$ and $0\leq j\leq n_i +1$.
Choose $T(\vep):=T(\xi)= ( N(\xi)+2 )\, \sup \rho$. Observe that $T(\xi)$ depends
only on $\xi$ (hence only depending on $\vep$) and the upper bound for the roof function. Set $s=s_1$.

Before giving the full details of the proof let us make some comments to illustrate the difficulties involved.
The proof of the theorem consists of proving that the trajectory of the point $(x,s)$ under
the action of the suspension semiflow follows closely the pieces of orbit of the prescribed points $(x_i,s_i)$
with a control on the time in between. At each moment of the shadowing process one needs a control on
the lap number involving either the point $x$ or the points $x_i$. Since the lap number corresponding to $x$ and the one
for some $x_i$ may differ by one, there are at most $18^{k-1}$ cases to consider.
We will explicit the key estimates in the case where $k=2$, which encloses all the difficulties of the general case and
where the notation is greatly simplified. The general case involves a completely analogous but much more technical computation using the ideas from the case $k=2$. For that purpose, in the remaining we will prove the following:

\vspace{.15cm}
\noindent {\bf Claim:}
$
d_\rho( X_{t}  ({x}, s) , X_t(x_1,s_1) ) <\vep
$
for every  $t \in [0, t_1]$ and there exists $0\le p_1 \le T(\vep)$ so that
$
d_\rho( X_{t+t_1+p_1}  ({x}, s_1) , X_t(x_2,s_2) ) <\vep
$
for every  $t \in [0, t_2]$.
\vspace{.15cm}

Since $s=s_1$, for every $t\in [0,t_1]$ one can write
$$
X_{t}  (x, s_1)= \Big( f^j(x), s_1+t - \sum_{j=0}^{j-1} \rho(f^j(x)) \Big)
$$
and
$$
X_{t}  (x_1, s_1)= \Big( f^{j_1}(x_1), s_1+t - \sum_{j=0}^{j_1-1} \rho(f^j(x_1)) \Big),
$$
where $j=j(x,s_1,t)\in \mathbb N_0$ and $j_1=j_1(x_1,s_1,t) \in\mathbb N_0$ are uniquely determined by
\begin{equation}\label{equa5}
\sum_{i=0}^{j-1} \rho(f^i(x)) \le s_1+t < \sum_{i=0}^{j} \rho(f^i(x))
    \quad\text{and}\quad
    \sum_{i=0}^{j_1-1} \rho(f^i(x_1)) \le s_1+t < \sum_{i=0}^{j_1} \rho(f^i(x_1)).
\end{equation}
By the choice of $\xi$ it follows that $C_\xi \ll \inf_{x\in M} \rho(x)$ and so
$|j(x,s_1,t) - j_1(x,s_1,t)| \le 1$ for every $t \in [0, t_1]$.
Fix $t\in [0,t_1]$. We can estimate the $d_{\rho}$-distance according to the following three prototypical cases:
\begin{itemize}
 \item[(i)] if $j=j(x,s_1,t) =  j_1(x,s_1,t)$ then estimating the distance from above by the natural
 	horizontal and vertical segments (see Figure~\ref{fig:1} below)  it follows that
\begin{align}
    d_{\rho} & ( X_{t}  (x, s_1) , X_t(x_1,s_1) ) \nonumber \\
         & = d_{1}\Big( \Big( f^j(x), \frac{s_1+t - \sum_{i=0}^{j-1} \rho(f^i(x))}{\rho(f^{j}(x))} \Big) , \Big( f^{j_1}(x_1),
         		\frac{s_1+t - \sum_{i=0}^{j_1-1} \rho(f^i(x_1))}{\rho(f^{j_1}(x_1))} \Big) \Big) \nonumber  \\
	& \le d_{1}\Big( \big( f^j(x), \frac{s_1+t - \sum_{i=0}^{j_{1}-1} \rho(f^i(x))}{\rho(f^{j}(x))} \big) , \big( f^{j}(x_1),
		\frac{s_1+t - \sum_{i=0}^{j_1-1} \rho(f^i(x))}{\rho(f^{j_1}(x))} \big) \Big) \tag{$A_1$} \\
	& +      d_{1}\Big( \big( f^{j}(x_1), \frac{s_1+t - \sum_{i=0}^{j_1-1} \rho(f^i(x))}{\rho(f^{j_1}(x))} \big) , \big( f^{j_1}(x_1),
		\frac{s_1+t - \sum_{i=0}^{j_1-1} \rho(f^i(x_1))}{\rho(f^{j_1}(x_1))} \big) \Big) \tag{$A_2$}
\end{align}
and consequently,
\begin{align*}
    d_{\rho}( X_{t}  (x, s_1) , X_t(x_1,s_1) )
	& \le      (1 - \frac{s_1+t - \sum_{i=0}^{j-1} \rho(f^i(x))}{\rho(f^{j}(x))}) \; d(f^{j}(x) , f^{j}(x_{1})) \\
	& + \frac{s_1+t - \sum_{i=0}^{j-1} \rho(f^i(x))}{\rho(f^{j}(x))}  \; d(f^{j+1}(x) , f^{j+1}(x_{1})) \\
	& +     \Big| \frac{s_1+t - \sum_{i=0}^{j_1-1} \rho(f^i(x))}{\rho(f^{j_1}(x))} - \frac{s_1+t - \sum_{i=0}^{j_1-1} \rho(f^i(x_1))}{\rho(f^{j_1}(x_1))} \Big|.
\end{align*}
Since points in the same dynamical ball for $f$ remain up to distance $\xi$ along the prescribed piece of orbit, the sum of the
first two terms in the right hand side above are smaller than $\xi$. We shall bound differently
the third summand in the right hand side above, which we will denote by $(*)$.
By the choice of $\xi$ and uniform continuity of $\rho$, by triangular inequality,
\begin{align*}
 (*)  & \leq
     \Big| \frac{s_1+t - \sum_{i=0}^{j_1-1} \rho(f^i(x)) - s_1-t + \sum_{i=0}^{j_1-1} \rho(f^i(x_1))}{\rho(f^{j_1}(x))} \Big| \\
    & + \frac{\big(s_1+t - \sum_{i=0}^{j_1-1} \rho(f^i(x_1)) \big)}{\rho(f^{j_1}(x)) \cdot
    	\rho(f^{j_1}(x_{1}))}|\rho(f^{j_1}(x)) - \rho(f^{j_1}(x_{1}))|
    \leq \frac{C_{\xi}}{\inf \rho} + \frac{C_{\xi}}{\inf \rho}.
\end{align*}
By choice of $\xi>0$ we get $d_{\rho}( X_{t}  (x, s_1) , X_t(x_1,s_1) ) < \vep$ as required.
\begin{figure}[htbp]
 \begin{center}
  \includegraphics[width=16cm]{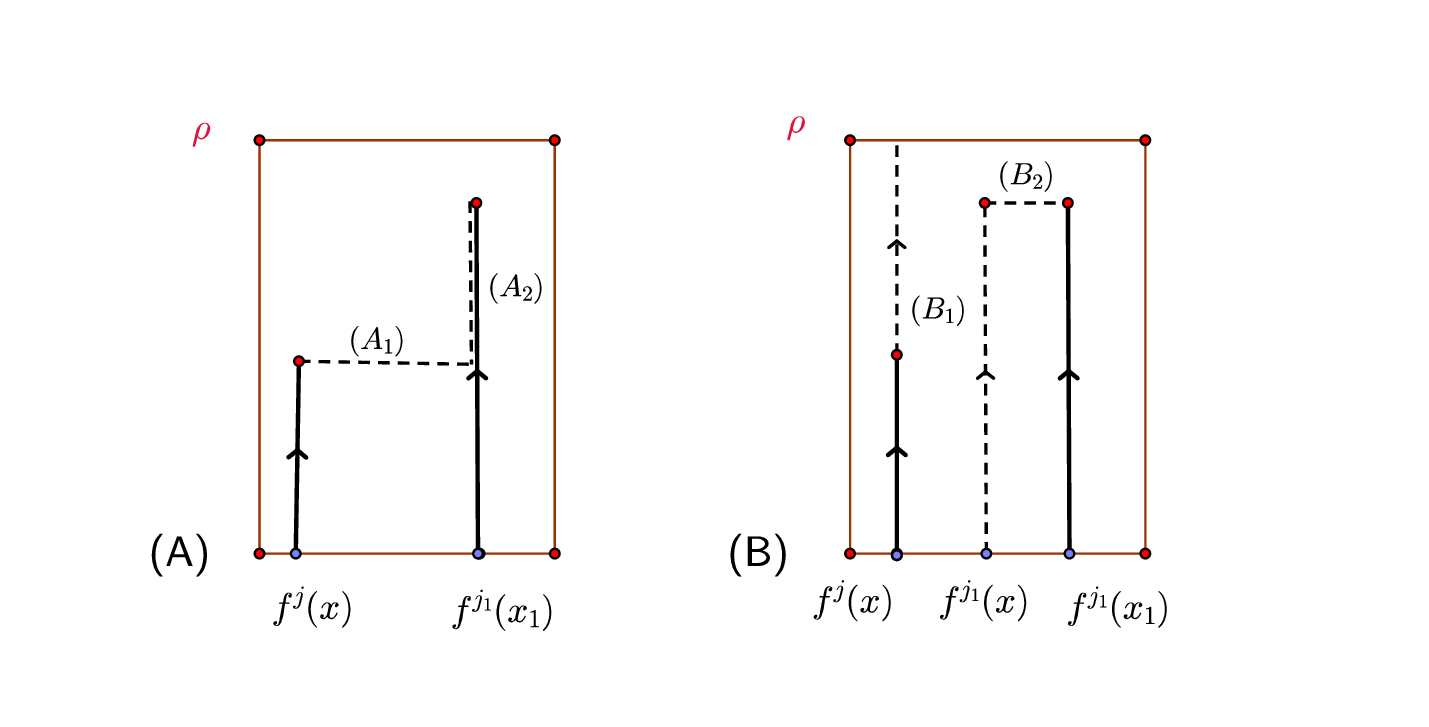}
 \end{center}
 \caption{Schematic description of the (dotted) vertical and horizontal segments used to estimate the Bowen-Walters distance:
 (A) corresponds to case (i) above; (B) corresponds to cases (ii) and (iii) below.}
 \label{fig:1}
\end{figure}

\item[(ii)] The second case to consider is the case $j=j(x,s_1,t) =  j_1(x,s_1,t) + 1$.  Noticing that $f^{j}(x)$ and $f^{j_1}(x)$
	are consecutive elements of the same orbit, we get
\begin{align}
    d_{\rho} & ( X_{t}  (x, s_1) , X_t(x_1,s_1) ) \nonumber \\
         & = d_{1}\Big( \big( f^j(x), \frac{s_1+t - \sum_{i=0}^{j-1} \rho(f^i(x))}{\rho(f^{j}(x))} \big) \,,\, \big( f^{j_1}(x_1), \frac{s_1+t -
         	\sum_{i=0}^{j_1-1} \rho(f^i(x_1))}{\rho(f^{j_1}(x_1))} \big) \Big) \nonumber \\
	& \le   d_{1}\Big( \big( f^{j_{1}}(x_{1}), \frac{s_1+t - \sum_{i=0}^{j_{1}-1} \rho(f^i(x_{1}))}{\rho(f^{j_{1}}(x_{1}))} \big) \,,\, \big( f^{j_{1}}(x), \frac{s_1+t - \sum_{i=0}^{j_{1}-1} \rho(f^j(x_{1}))}{\rho(f^{j_{1}}(x_{1}))} \big) \Big) \tag{$B_2$} \\
	& +     d_{1}\Big( \big( f^{j_{1}}(x), \frac{s_1+t - \sum_{i=0}^{j_{1}-1} \rho(f^j(x_{1}))}{\rho(f^{j_{1}}(x_{1}))} \big) \,,\, \big( f^j(x), \frac{s_1+t - \sum_{i=0}^{j-1} \rho(f^i(x))}{\rho(f^{j}(x))} \big) \Big) \tag{$B_1$} \\
	& \leq     \Big(1 - \frac{s_1+t - \sum_{i=0}^{j_{1}-1} \rho(f^i(x_{1}))}{\rho(f^{j_{1}}(x_{1}))} \Big) \; d(f^{j_{1}}(x_{1}) , f^{j_{1}}(x)) 	\nonumber \\
	& +\; \frac{s_1+t - \sum_{i=0}^{j_{1}-1} \rho(f^i(x_{1}))}{\rho(f^{j_{1}}(x_{1}))} \; d(f^{j_{1}+1}(x_{1}) , f^{j_{1}+1}(x)) + (**)
	\nonumber
\end{align}
where  $ (**) := \Big| \big(1 - \frac{s_1+t - \sum_{i=0}^{j_1-1} \rho(f^i(x_{1}))}{\rho(f^{j_1}(x_{1}))} \big)
+ \frac{s_1+t - \sum_{i=0}^{j-1} \rho(f^i(x))}{\rho(f^{j}(x))} \Big|$ (see Figure~\ref{fig:1} above).
Since $x$ was chosen so that its orbit to approximates the orbit of $x_1$ during the fist $n_1+1$ iterates
then the sum of the first two terms is smaller than $\xi$.
Since  the two terms involved in the absolute value are positive and $j=j(x,s_1,t) =  j_1(x,s_1,t) + 1$ it follows from relations (\ref{equa5}) that
    $$
     (**) = \frac{-s_{1} - t + \sum_{i=0}^{j_1} \rho(f^i(x_{1}))}{\rho(f^{j_1}(x_{1}))}
     	+ \frac{s_1+t - \sum_{i=0}^{j-1} \rho(f^i(x))}{\rho(f^{j}(x))}
	\leq \frac{C_{\xi}}{\inf \rho}
	+ \frac{C_{\xi}}{\inf \rho}.
    $$
     Hence, we obtain that $d_{\rho}( X_{t}  (x, s_1) , X_t(x_1,s_1) ) < \vep$.
\item[(iii)] If $j=j(x,s_1,t) =  j_1(x,s_1,t) - 1$ the computations are completely analogous to (ii) interchanging the roles of $x_1$ and $x$.
\end{itemize}

After the choice of the point $(x,s)$, partially determined by the gluing orbit property
for $f$ and also by taking $s=s_1$, we claim that one can prove that the second assertion in the Claim
is also satisfied. For each of the previous situations (i)-(iii) above (at time $t_1$)
we will subdivide the proof in three additional cases, corresponding to the relative position of the lap number of $x$
and $x_2$. This will be made precise in the remaining of this section.

\medskip
First assume \textit{case (i)} above holds at time $t_1$, that is $j_1:=j(x,s_1,t_1) =  j_1(x_1,s_1,t_1)$ (c.f. Figure~\ref{fig:2}
below). In other words
$$
X_{t_1}  ({x},  s)= \Big( f^{j_1}(x),  s_1+t_1 - \sum_{i=0}^{j_1-1} \rho(f^i(x)) \Big)
	\; \text{and}\;
	X_{t_1}  (x_1,  s_1)= \Big( f^{j_1}(x),  s_1+t_1 - \sum_{i=0}^{j_1-1} \rho(f^i(x)) \Big)
$$
where
\begin{equation*}
\sum_{i=0}^{j_1-1} \rho(f^i(x)) \le s_1+t_1 < \sum_{i=0}^{j_1} \rho(f^i(x))
    \quad\text{and}\quad
    \sum_{i=0}^{j_1-1} \rho(f^i(x_1)) \le s_1+t_1 < \sum_{i=0}^{j_1} \rho(f^i(x_1)).
\end{equation*}

\begin{figure}[htbp]
 \begin{center}
  \includegraphics[width=8cm, height=7cm]{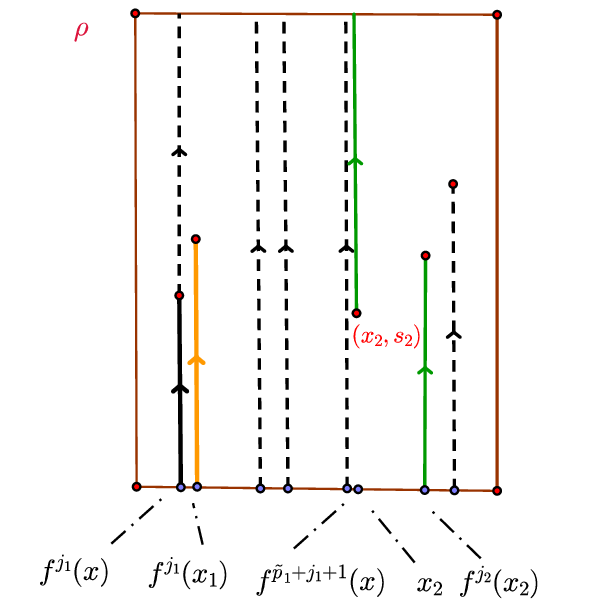}
 \end{center}
 \caption{The dotted line represents the piece of the trajectory of $x$ shadowing the piece of trajectory
 $X_t(f^{j_1}(x_1),0)$ for $t\in [0, s_1 + t_1 - \sum_{i=0}^{j_1-1} \rho(f^i(x))]$ and, after some
 time $p_1$, shadows the piece of trajectory
 $X_t(x_2,s_2)$ for a time $t\in [0, t_2]$. }
 \label{fig:2}
\end{figure}

In this case take
    $$
    p_1
      =
    \begin{cases}
    \begin{array}{ll}
    s_2
    + \sum_{i=0}^{\tilde{p}_{1}-1} \rho(f^{j_1+i}(x))
    - [s_1 + t_1 - \sum_{i=0}^{j_1-1} \rho(f^i(x))],
    & \text{ if } s_{2} \leq \rho(f^{\tilde{p}_{1}+ j_{1}}(x))
    \\
    (s_2 - C_{\xi})
    +\sum_{i=0}^{\tilde{p}_{1}-1} \rho(f^{j_1+i}(x))
    - [s_1 + t_1 - \sum_{i=0}^{j_1-1} \rho(f^i(x))],
    & \text{ otherwise. }
    \end{array}
   \end{cases}
    $$
In both cases above it is clear that $|p_1| \le (\tilde p_1 + 2) \sup \rho \le (N(\xi) + 2) \sup \rho = T(\vep)$.
Now one can estimate $d_{\rho}( X_{t+p_1+t_1}(x,s_1), X_t(x_2,s_2))$ according to the relative position
of lap numbers.

If $s_{2} \leq \rho(f^{\tilde{p}_{1}+ j_{1}}(x))$ then $X_{p_1+t_1} (x,s)=(f^{j_1+\tilde p_1}(x), s_2)$.
For any $t\in [0,t_2]$ set by, some abuse of notation,
$j=j(f^{j_1+\tilde p_1}(x),s_2,t) \in \mathbb N_0$ and $j_2=j_2(x_2,s_2,t) \in\mathbb N_0$ which are uniquely determined by
\begin{equation}\label{equa6}
\sum_{i=0}^{j-1} \rho(f^{i+j_1+\tilde p_1}(x)) \le s_2+t < \sum_{i=0}^{j} \rho(f^{i+j_1+\tilde p_1}(x))
    \quad\text{and}\quad
    \sum_{i=0}^{j_2-1} \rho(f^i(x_2)) \le s_2+t < \sum_{i=0}^{j_2} \rho(f^i(x_2)).
\end{equation}
These lap numbers satisfy $|j(f^{j_1+\tilde p_1}(x),s_2,t) - j_2(x_2,s_2,t)| \le 1$ for every $t \in [0, t_2]$.
Subdividing the later in three cases, when $j=j_2$, $j=j_2+1$ and $j=j_2-1$, we can deduce similarly as before that
$d_{\rho}\Big( X_{t+p_1+t_1}(x,s_1), X_t(x_2,s_2) \Big) < \vep$ for every $t\in [0,t_2]$.

If $s_{2} > \rho(f^{\tilde{p}_{1}+ j_{1}}(x))$ then $X_{p_1+t_1} (x,s)=(f^{j_1+\tilde p_1}(x), s_2-C_\xi)$.
For any $t\in [0,t_2]$ set
$j=j(f^{j_1+\tilde p_1}(x),s_2-C_\xi,t) \in \mathbb N_0$ uniquely determined by
\begin{equation}\label{equa7}
\sum_{i=0}^{j-1} \rho(f^{i+j_1+\tilde p_1}(x)) \le (s_2 - C_\xi) +t < \sum_{i=0}^{j} \rho(f^{i+j_1+\tilde p_1}(x))
\end{equation}
and $j_2=j_2(x_2,s_2,t) \in\mathbb N_0$ determined by ~\eqref{equa6}. In the case that $j=j_2$,
%
%
\begin{align*}
    d_{\rho} & ( X_{t+p_1+t_1}(x,s), X_t(x_2,s_2)) \nonumber \\
         & = d_{1}\big( \big( f^{j+j_{1}+\tilde{p}_{1}}(x), \frac{s_2 - C_{\xi} + t - \sum_{i=0}^{j-1} \rho(f^{i+j_{1}+\tilde{p}_{1}}(x))}{\rho(f^{j+j_{1}+\tilde{p}_{1}}(x))} \big) , \big( f^{j_2}(x_2),
         		\frac{s_2+t - \sum_{i=0}^{j_2-1} \rho(f^i(x_2))}{\rho(f^{j_2}(x_2))} \big) \big)  \\
	& \le d_{1}\big( \big( f^{j+j_{1}+\tilde{p}_{1}}(x), \frac{s_2 - C_{\xi} +t - \sum_{i=0}^{j-1} \rho(f^{i+j_{1}+\tilde{p}_{1}}(x))}	
		{\rho(f^{j + j_{1}+\tilde{p}_{1}}(x))} \big) ,  \big( f^{j_{2}}(x_2),
		\frac{s_2 - C_{\xi} +t - \sum_{i=0}^{j-1} \rho(f^{i+j_{1}+\tilde{p}_{1}}(x))}{\rho(f^{j+ j_{1}+\tilde{p}_{1}}(x))} \big) \big)  \\
	& +      d_{1}\Big( \big( f^{j_{2}}(x_2),
		\frac{s_2 - C_{\xi} +t - \sum_{i=0}^{j-1} \rho(f^{i+j_{1}+\tilde{p}_{1}}(x))}{\rho(f^{j+ j_{1}+\tilde{p}_{1}}(x))} \big) , \big( f^{j_2}(x_2),
		\frac{s_1+t - \sum_{i=0}^{j_2-1} \rho(f^i(x_2))}{\rho(f^{j_2}(x_2))} \big) \Big)
\end{align*}
and consequently,
\begin{align*}
    d_{\rho}( X_{t+p_1+t_1}(x,s), X_t(x_2,s_2))
	& \le      (1 - \frac{s_2 - C_{\xi} +t - \sum_{i=0}^{j-1} \rho(f^{i+j_{1}+\tilde{p}_{1}}(x))}{\rho(f^{j+ j_{1}+\tilde{p}_{1}}(x))} )
		\; d(f^{j + j_{1}+\tilde{p}_{1}}(x) , f^{j_{2}}(x_{2})) \\
	& + \frac{s_2 - C_{\xi} +t - \sum_{i=0}^{j-1} \rho(f^{i+j_{1}+\tilde{p}_{1}}(x))}{\rho(f^{j+ j_{1}+\tilde{p}_{1}}(x))}  \;
		d(f^{j +1+ j_{1}+\tilde{p}_{1}}(x) , f^{j_{2}+1}(x_{2})) \\
	& +     \Big| \frac{s_2 - C_{\xi} +t - \sum_{i=0}^{j-1} \rho(f^{i+j_{1}+\tilde{p}_{1}}(x))}{\rho(f^{j + j_{1}+\tilde{p}_{1}}(x))} - \frac{s_2+t - \sum_{i=0}^{j_2-1} \rho(f^i(x_2))}{\rho(f^{j_2}(x_2))} \Big|.
\end{align*}
Since $j=j_2$ and points in the same dynamical ball for $f$ remain up to distance $\xi$ along the prescribed piece of orbit,
the sum of the
first two terms in the right hand side above are smaller than $\xi$. We shall bound differently
the third summand in the right hand side above, which we will denote by $(***)$.
By triangular inequality,
\begin{align*}
 (***)  & \leq
     \Big| \frac{s_2 - C_{\xi} +t - \sum_{i=0}^{j-1} \rho(f^{i+j_{1}+\tilde{p}_{1}}(x)) - s_2-t + \sum_{i=0}^{j_2-1} \rho(f^i(x_2))}{\rho(f^{j+j_{1}+\tilde{p}_{1}}(x))} \Big| \\
    & + \frac{\big(s_2+t - \sum_{i=0}^{j_2-1} \rho(f^i(x_2)) \big)}{\rho(f^{j+j_{1}+\tilde{p}_{1}}(x)) \cdot
    	\rho(f^{j_2}(x_{2}))}|\rho(f^{j+j_{1}+\tilde{p}_{1}}(x)) - \rho(f^{j_2}(x_{2}))|
    \leq \frac{2C_{\xi}}{\inf\rho} + \frac{C_{\xi}}{\inf \rho} .
\end{align*}
%
	%
	%
The estimates in the case $j=j_2-1$ and $j=j_2+1$ are obtained similarly.
In consequence $d_{\rho}\big( X_{t+p_1+t_1}(x,s), X_t(x_2,s_2) \big) < \vep$ for every $t\in [0,t_2]$.

\medskip
Now assume  \textit{case (ii)} above at time $t_1$, that is, $j(x,s,t_1) =  j_1 + 1$ with
$j_1=j_1(x_1,s_1,t)$ (see Figure~\ref{fig:3}
below).
\begin{figure}[htbp]
 \begin{center}
  \includegraphics[width=8cm, height=8cm]{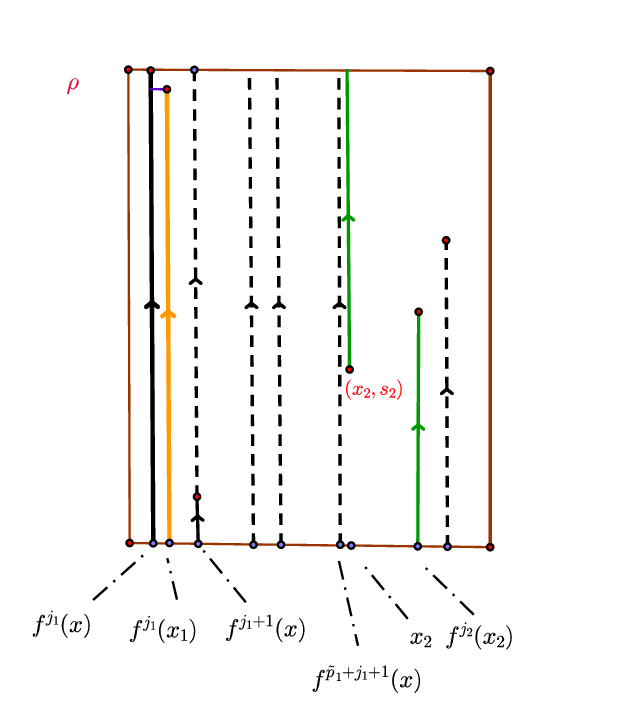}
 \end{center}
 \caption{
Schematic picture where the dotted line represents the piece of the trajectory of $x$ shadowing the piece of trajectory
 $X_t(f^{j_1+1}(x_1),0)$ for $t\in [0, s_1 + t_1 - \sum_{i=0}^{j_1} \rho(f^i(x))]$ and, after some
 time $p_1$, follows the piece of the trajectory $X_t(x_2,s_2)$ for $t\in [0, t_2]$.}
 \label{fig:3}
\end{figure}
If this is the case take
    $$
    p_1
      =
    \begin{cases}
    \begin{array}{ll}
    s_2
    + \sum_{i=0}^{\tilde{p}_{1}-1} \rho(f^{j_1+1+i}(x))
    - [s_1 + t_1 - \sum_{i=0}^{j_1} \rho(f^i(x))],
    & \text{ if } s_{2} \leq \rho(f^{\tilde{p}_{1}+ j_{1}+1}(x))
    \\
    (s_2 - C_{\xi})
    + \sum_{i=0}^{\tilde{p}_{1}-1} \rho(f^{j_1+1+i}(x))
    - [s_1 + t_1 - \sum_{i=0}^{j_1} \rho(f^i(x))],
    & \text{ otherwise. }
    \end{array}
   \end{cases}
    $$
As above, $|p_1| \le T(\vep)$ in both cases. If $s_{2} \leq \rho(f^{\tilde{p}_{1}+ j_{1}+1}(x))$ then computations completely identical to case (ii) proving
$d_{\rho}\big( X_{t+p_1+t_1}(x,s), X_t(x_2,s_2) \big) < \vep$ for every $t\in [0,t_2]$. In the case that
$s_{2} > \rho(f^{\tilde{p}_{1}+ j_{1}+1}(x))$ it follows that
\begin{align}
    d_{\rho} & \big( X_{t+p_1+t_1}(x,s), X_t(x_2,s_2) \big) \nonumber \\
         & = d_{1}\big( \big( f^{j+j_{1}+\tilde{p}_{1}}(x), \frac{s_2 - C_{\xi} + t - \sum_{i=0}^{j-1} \rho(f^{i+j_{1}+\tilde{p}_{1}}(x))}{\rho(f^{j+j_{1}+\tilde{p}_{1}}(x))} \big) , \big( f^{j_2}(x_2),
         		\frac{s_2+t - \sum_{i=0}^{j_2-1} \rho(f^i(x_2))}{\rho(f^{j_2}(x_2))} \big) \big) \nonumber \\
	& \le   d_{1}\Big( \big( f^{j_{2}}(x_{2}), \frac{s_2+t - \sum_{i=0}^{j_{2}-1} \rho(f^i(x_{2}))}{\rho(f^{j_{2}}(x_{2}))} \big) \,,\, \big( f^{j_{2}+j_{1}+\tilde{p}_{1}}(x), \frac{s_2+t - \sum_{i=0}^{j_{2}-1} \rho(f^j(x_{2}))}{\rho(f^{j_{2}}(x_{2}))} \big) \Big)  \nonumber\\
	& +     d_{1}\Big( \big( f^{j_{2}+j_{1}+\tilde{p}_{1}}(x), \frac{s_2+t - \sum_{i=0}^{j_{2}-1} \rho(f^j(x_{2}))}{\rho(f^{j_{2}}(x_{2}))} \big) \,,\, \big( f^{j+j_{1}+\tilde{p}_{1}}(x), \frac{s_2 - C_{\xi} + t - \sum_{i=0}^{j-1} \rho(f^{i+j_{1}+\tilde{p}_{1}}(x))}{\rho(f^{j+j_{1}+\tilde{p}_{1}}(x))} \big) \Big)  \nonumber\\
	& \leq     \Big(1 - \frac{s_2+t - \sum_{i=0}^{j_{2}-1} \rho(f^j(x_{2}))}{\rho(f^{j_{2}}(x_{2}))} \Big) \; d(f^{j_{2}}(x_{2}) , f^{j_{2}+j_{1}+\tilde{p}_{1}}(x)) 	\nonumber \\
	& +\; \frac{s_2+t - \sum_{i=0}^{j_{2}-1} \rho(f^j(x_{2}))}{\rho(f^{j_{2}}(x_{2}))} \; d(f^{j_{2}+1}(x_{2}) , f^{j_{2}+j_{1}+\tilde{p}_{1}+1}(x)) + (\star)
	\nonumber
\end{align}
where  $ (\star) := \Big| \big(1 - \frac{s_2+t - \sum_{i=0}^{j_{2}-1} \rho(f^j(x_{2}))}{\rho(f^{j_{2}}(x_{2}))} \big)
+ \frac{s_2 - C_{\xi} + t - \sum_{i=0}^{j-1} \rho(f^{i+j_{1}+\tilde{p}_{1}}(x))}{\rho(f^{j+j_{1}+\tilde{p}_{1}}(x))} \Big|$.
By choose of $x$ follows that the sum of the first two terms is smaller than $\xi$.
Since  the two terms involved in the absolute value are positive and $j=j(f^{j_1+\tilde p_1}(x),s_2-C_\xi,t) =  j_2(x_{2},s_2,t) + 1$ we have
    $$
     (\star) =  \Big| - \frac{s_2+t - \sum_{i=0}^{j_{2}} \rho(f^j(x_{2}))}{\rho(f^{j_{2}}(x_{2}))}
+ \frac{s_2 - C_{\xi} + t - \sum_{i=0}^{j-1} \rho(f^{i+j_{1}+\tilde{p}_{1}}(x))}{\rho(f^{j+j_{1}+\tilde{p}_{1}}(x))} \Big|
	\leq \frac{2C_{\xi}}{\inf \rho} + \frac{C_{\xi}}{\inf \rho}.
    $$
     Hence, we obtain that $d_{\rho}( X_{t}  (x, s_1) , X_t(x_1,s_1) ) < \vep$.

\medskip
If \textit{Case (iii)} holds, for which $j(x,s,t_1) =  j_1(x_1,s_1,t) - 1$, again completely analogous to Case (ii)
with a modification on the definition of $p_1$ which must be replaced by
    $$
    p_1
      =
    \begin{cases}
    \begin{array}{ll}
    s_2
    + \sum_{i=0}^{\tilde{p}_{1}-1} \rho(f^{j_1-1+i}(x))
    - [s_1 + t_1 - \sum_{i=0}^{j_1-2} \rho(f^i(x))],
    & \text{ if } s_{2} \leq \rho(f^{\tilde{p}_{1}+ j_{1}-1}(x))
    \\
    (s_2 - C_{\xi})
    + \sum_{i=0}^{\tilde{p}_{1}-1} \rho(f^{j_1-1+i}(x))
    - [s_1 + t_1 - \sum_{i=0}^{j_1-2} \rho(f^i(x))],
    & \text{ otherwise. }
    \end{array}
   \end{cases}
    $$
The remaining estimates for the finite time shadowing necessary for proving the gluing orbit property are identical to the ones
we have obtained above and for that reason we shall omit the details.
This completes the proof of the theorem.


\subsection{Proof of Theorem~\ref{thm:nugluing1}}

Assume that $f: M \to M$ satisfies the gluing orbit property. Let $\mu$ be an $f$-invariant ergodic
probability measure  and that the roof function $\rho: M \to \mathbb R_0^+$ is integrable.
Fix $\vep>0$. Consider arbitrary points $(x_1,s_1), (x_2,s_2), \dots, (x_k,s_k) \in M_r$ in a $\bar\mu$-full measure set
in such a way that $\lim_{\xi\to 0} C_\xi(x_i)=0$ for every $1\le i \le k$ and consider arbitrary times
$t_1, \dots, t_k \ge 0$. Associated to $(x_i, s_i)$ and $t_i$ consider the dynamical balls
$B(x_i, n_i,\vep) \subset M$, where $n_i=n_i(x_i, s_i,t_i)\ge 1$ is determined by equation \eqref{eq:turns}.
Let $\xi>0$ be such that $\xi+C_\xi(x_i)<\vep$ for every $1\le i \le k$ and let $N(\xi)\ge 1$ be given by the
gluing orbit property for $f$.
%
%
Thus, there exists $x\in M$ and $\tilde{p}_i \le N(\xi)$, $1\le i \le k$, so that
$
d (f^j(x),f^j(x_1) ) \leq \xi
$
for every $0\leq j \leq n_1 +1$ and
$
d (f^{j+n_1+\tilde{p}_1+\dots +n_{i-1}+\tilde{p}_{i-1}}(x) \,,\, f^j(x_i) )
        \leq \xi
$
for every $2\leq i\leq k$ and $0\leq j\leq n_i +1$.
%
The proof follows the same strategy as in Theorem~\ref{thm:gluingAA} with due care by the fact $\rho$ is not
necessarily bounded but $\rho\in L^1(\mu)$. Take
$$
T((x_i,s_i),t_i,\xi)
    := \sum_{j=0}^{\tilde{p}_{i}+2} \rho( f^{j+n_i-1}(x_i)) + C_\xi(x_i)
$$
(where $\xi>0$ depends on $\vep$)
and decompose the terms as the pieces of the orbits of $x_i$ and the terms corresponding
to the specified time lags $\tilde{p}_{i}$ as follows:
$$
\frac{1}{n_i + \tilde{p}_{i}}
     \sum_{j=0}^{n_i + \tilde{p}_{i}} \rho( f^j ( x_i )  )
     =  \frac{n_i}{n_i +  \tilde{p}_{i} }  \; \frac{1}{n_i}  \; \sum_{j=0}^{n_i-2} \rho( f^j ( x_i )  )
     + \frac{n_i}{n_i + \tilde{p}_{i}} \; \frac{1}{n_i}  \; \sum_{j=n_i-1}^{n_i+\tilde{p}_{i}+1} \rho( f^j ( x_i )  )
$$
Since the roof function $\rho$ is almost everywhere finite 
then $n_i=n_i((x_i,s_i),t_i) \to \infty$ as $t \to \infty$ and by Birkhoff's ergodic theorem it follows that for $\overline \mu$-almost every $(x_i,s_i)$ the limit in second term in the right hand side is zero. Together with ~\eqref{eq:distortnun}, this proves that $\lim_{\xi\to 0} \lim_{t_i \to \infty} \frac{1}{n_i}  \; T((x_i,s_i),t_i,\xi) =0$.

Let $x\in M$ be given by the gluing orbit property as above and let $s=s_1$.
We claim that
$
d_\rho( X_{t}  ({x}, s) , X_t(x_1,s_1) ) <\vep
$
for every  $t \in [0, t_i]$ and there exists $0\le p_i \le T(\vep)$ so that
$
d_\rho( X_{t+t_1+p_1}  ({x}, s_1) , X_t(x_2,s_2) ) <\vep
$
for every  $t \in [0, t_{i+1}]$.

Since the proof of the shadowing process is completely analogous to the proof
of the Claim in Subsection~\ref{subsecClaim} we will focus on the main difference which consists
of expliciting the choice of the gluing times $p_i>0$.
and keep the notation of that subsection.
Thus we are reduced to prove the existence of $0\le p_i \le T((x_i,s_i),t_i,\xi)$ for which
$
d_\rho( X_{p_i}  (X_{t_i}({x}, s_i)) , (x_{i+1},s_{i+1}) ) <\vep
$.
As in the proof of the previous theorem, we subdivide the argument for each choice of the gluing time
in three cases.

\noindent
If $j=j_1$ take
    $$
    p_1
      =
    \begin{cases}
    \begin{array}{ll}
s_2 + \sum_{i=0}^{\tilde{p}_{1}-1} \rho(f^{i+j_1}(x))
    - [s_1 + t_1 - \sum_{i=0}^{ j_1-1} \rho(f^i(x))],
    & \text{ if } s_{2} \leq \rho(f^{\tilde{p}_{1}+ j_{1}}(x))
    \\
s_2 - C_{\xi} + \sum_{i=0}^{\tilde{p}_{1}-1} \rho(f^{i+ j_1}(x))
    + [ \sum_{i=0}^{j_1-1} \rho(f^i(x)) - (s_1+t_1)]
        & \text{ otherwise. }
    \end{array}
   \end{cases}
    $$
If $j=j_1-1$ take
    $$
    p_1
      =
    \begin{cases}
    \begin{array}{ll}
s_2 + \sum_{i=0}^{\tilde{p}_{1}-1} \rho(f^{i+j_{1} +1}(x))
    - [s_1 + t_1 - \sum_{i=0}^{ j_1} \rho(f^i(x))],
    & \text{ if } s_{2} \leq \rho(f^{\tilde{p}_{1}+ j_{1}+1}(x))
    \\
 s_2 - C_{\xi} + \sum_{i=0}^{\tilde{p}_{1}-1} \rho(f^{i+ j_{1}+1}(x))
    + [ \sum_{i=0}^{j_1} \rho(f^i(x)) - (s_1+t_1)]
        & \text{ otherwise. }
    \end{array}
   \end{cases}
    $$
If $j=j_1+1$ then take
    $$
    p_1
      =
    \begin{cases}
    \begin{array}{ll}
s_2 + \sum_{i=0}^{\tilde{p}_{1}-1} \rho(f^{i+j_{1}-1 }(x))
    - [s_1 + t_1 - \sum_{i=0}^{ j_1-2} \rho(f^i(x))],
    & \text{ if } s_{2} \leq \rho(f^{\tilde{p}_{1}+ j_{1}-1}(x))
    \\
s_2 - C_{\xi} + \sum_{i=0}^{\tilde{p}_{1}-1} \rho(f^{i+ j_{1}-1}(x))
    + [ \sum_{i=0}^{j_1-2} \rho(f^i(x)) - (s_1+t_1)]
        & \text{ otherwise. }
    \end{array}
   \end{cases}
    $$


In all cases,
\begin{align*}
0 \le p_1 & \le \sum_{j=0}^{\tilde{p}_{1}+2} \rho(f^{j+ n_1-1}(x_1))
     \le \sum_{j=0}^{\tilde{p}_{1}+2} \rho(f^{j+ n_1-1}(x_1)) + C_\xi
    = T((x_1,s_1),t_1,\xi).
\end{align*}
This proves our claim and
 completes the
proof of the theorem.

\subsection{Proof of Theorem~\ref{thm:nugluing2}}

Let $f: M \to M$ satisfy the non-uniform specification property with respect
to the ergodic and hyperbolic measure $\mu$ (c.f. \cite{OT,Va12}) and assume the roof
function $\rho: M \to \mathbb R_0^+$ is $\mu$-integrable and satisfies the
bounded distortion condition \eqref{eq:distortnun}.

By the non-uniform specification property for $(f,\mu)$, for $\mu$-almost every
$x$, every $\xi>0$ and $n\ge 1$ there exists $p(x,n,\xi) \ge 1$ satisfying $\lim_{\xi \to 0} \limsup_{n \to\infty} p(x,n,\xi)/n=0$
and such that the following property holds: for every $k\ge 1$,  $\mu^k$-almost every
$(x_1, \dots, x_k) \in M^k$, every $n_1, \dots, n_k \ge 1$ and $p_i \ge p(x_i,n_i,\xi)$
there exists $x\in M$ such that
$$
\begin{array}{cc}
d(f^j(x),f^j(x_1) ) \leq \xi
    \quad \text{and} \quad
    d (f^{j+n_1+p_1+\dots +n_{i-1}+p_{i-1}}(x) \;,\; f^j(x_i) ) \leq \xi
\end{array}
$$
for every $2\leq i\leq k$ and $0\leq j\leq n_i$.

We proceed to prove that the semiflow satisfies the non-uniform gluing orbit property. Fix $\vep>0$.
Consider arbitrary points $(x_1,s_1), (x_2,s_2), \dots, (x_k,s_k) \in M_r$ in a
full $\overline\mu^k$-measure set and times $t_1, \dots, t_k \ge 0$, in such a way that
$\lim_{\xi \to 0 C_\xi(x_i)=0}$ for every $i$. Let $\xi>0$ be such that $\xi+C_\xi(x_i) <\vep$ for every $1\le i \le k$.

Associated to $(x_i, s_i)$ and $t_i$ consider the dynamical balls $B(x_i, n_i,\xi) \subset M$,
where each lap number $n_i=n_i(x_i, s_i,t_i)\ge 1$ is determined by
equation \eqref{eq:turns}. Let us define 
$$
T((x_i,s_i),t_i,\xi)
    = \sum_{j=0}^{p(x_i,n_i,\xi)+1} \rho( f^{j+n_i-1}(x_i)) + C_\xi(x_i).
$$
We claim that $T((x_i,s_i),t_i,\xi)$ has sublinear growth in $t_i$.  Similarly to before one can write
\begin{align}
\frac{1}{n_i + p(x_i,n_i,\vep)+2}
     \sum_{j=0}^{n_i + p(x_i,n_i,\vep)} \rho( f^j ( x_i )  )
    & =  \frac{n_i-2}{n_i +  p(x_i,n_i,\vep) +2}  \; \frac{1}{n_i-2}  \; \sum_{j=0}^{n_i-2} \rho( f^j ( x_i )  )  \label{eq:4} \\
    & + \frac{1}{n_i + p(x_i,n_i,\vep)+2} \;  \sum_{j=0}^{p(x_i,n_i,\xi)+1} \rho( f^{j+n_i-1} ( x_i )  ) \label{eq:5}.
\end{align}
Since the roof function $\rho$ is bounded away from zero  then $n_i=n_i((x_i,s_i),t_i) \to \infty$ as $t_i \to \infty$ and
$$
\lim_{\vep\to 0} \lim_{t_i \to\infty} \frac{n_i}{n_i + p(x_i,n_i,\vep)}
    = \lim_{\vep\to 0}  \lim_{t_i \to\infty} \frac{1}{1+ \frac{p(x_i,n_i,\vep)}{n_i}} =1.
$$
Hence, by Birkhoff's ergodic theorem
the term \eqref{eq:5} tends to zero as $t_i \to \infty$ for $\mu$-almost every $x_i$.
Using $\lim_{\xi \to 0} C_\xi(x_i) =0$ together with the previous equality, it follows that
$\lim_{t_i \to \infty} \frac{1}{n_i}  \; T((x_i,s_i),t_i,\vep)  ) =0$.
We observe also that
$\inf \rho \, n_i \le \sum_{j=0}^{n_i} \rho(f^j(x)) \le s_i+t_i$ we deduce that
$n_i \le C \frac{t_i}{\inf\rho}$ and consequently
$
\lim_{\xi\to0} \lim_{t_i \to \infty} \frac{1}{t_i}  \; T((x_i,s_i),t_i,\xi)  ) = 0.
$  Since the remaining of the
proof follows the same lines of Theorem~\ref{thm:nugluing1}
we shall omit the details.

\section*{Appendix: On the tempered variation condition}

In this Appendix we relate the tempered variation condition for observables on suspension semiflows
with the corresponding condition for reduced observables on the base dynamics.

\begin{proposition}\label{prop1}
Let $(X_{t})_{t\ge 0}$ be a suspension semiflow over a dynamical system  $f : M\to M$
with a roof function $\rho$ that is bounded away from zero and infinity and has tempered variation.
If the observable $\psi : M_\rho \to \mathbb R$ is bounded and the reduced observable
$\overline{\psi} : M \to \mathbb R$ has tempered variation then $\psi$ has tempered variation.
\end{proposition}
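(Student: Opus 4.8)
The plan is to transfer the tempered variation estimate from $\psi$ on $M_\rho$ to the reduced observable $\overline\psi$ on the base, via the Abramov-type decomposition of a flow integral into a Birkhoff sum of $\overline\psi$ plus two uniformly bounded edge corrections. Set $c:=\inf\rho>0$ and $C_\rho:=\sup\rho<\infty$, write $S_k g:=\sum_{j=0}^{k-1}g\circ f^j$ for $g:M\to\R$, and let $\gamma_f(g,k,\xi):=\sup_{x}\sup_{y\in B(x,k,\xi)}|S_kg(x)-S_kg(y)|$ be the discrete-time variation. By hypothesis there is $\xi_0>0$ with $\lim_{k\to\infty}\frac1k\gamma_f(\overline\psi,k,\xi_0)=0$ and $\lim_{k\to\infty}\frac1k\gamma_f(\rho,k,\xi_0)=0$; note also $\|\overline\psi\|_\infty\le\|\psi\|_\infty C_\rho$ since $\psi$ is bounded and $\rho\le C_\rho$.

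\emph{Step 1 (from flow Bowen balls to base Bowen balls).} The geometric core is: there is $\delta>0$, depending only on $\xi_0,c,C_\rho$, such that whenever $(y,u)$ lies in the Bowen ball $B((x,s),t,\delta)$ of the semiflow in $(M_\rho,d_\rho)$, then, with $n=n(x,s,t)$ the lap number fixed by \eqref{eq:turns}, one has $d(f^j(x),f^j(y))<\xi_0$ for all $0\le j\le n$, i.e.\ $y\in B_d(x,n+1,\xi_0)$. I would prove this directly from the Bowen--Walters distance: since $d_\rho(X_\tau(x,s),X_\tau(y,u))<\delta$ for all $\tau\in[0,t]$ and $\rho\ge c>0$, for $\delta$ small the two orbits stay in the same or in two consecutive fibers throughout $[0,t]$, so their lap-count offset never exceeds one; evaluating at the times when $X_\tau(x,s)$ reaches a fiber bottom $(f^j(x),0)$, the companion point $X_\tau(y,u)$ lies either in the fiber over $f^j(y)$ near its bottom or in the fiber over $f^{j-1}(y)$ near its top, and in both cases the structure of the Bowen--Walters distance near a fiber bottom forces $d(f^j(x),f^j(y))$ to be small, with a modulus depending only on $\delta,c,C_\rho$; shrinking $\delta$ makes this modulus less than $\xi_0$. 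One cannot invoke \eqref{eq:metric.equivalence} here, since $f$ is not assumed bi-Lipschitz and $\rho$ is only assumed to have tempered variation.

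\emph{Step 2 (decomposition and bound).} For $t$ large, splitting the orbit of $(x,s)$ at the instants it crosses fiber bottoms yields
\begin{equation*}
\int_0^t\psi(X_\tau(x,s))\,d\tau=E_0(x,s)+\sum_{j=1}^{n-1}\overline\psi(f^j(x))+E_1(x,s,t),\qquad |E_0|,|E_1|\le\|\psi\|_\infty C_\rho,
\end{equation*}
and likewise for $(y,u)$ with lap number $n'=n(y,u,t)$. Assuming without loss of generality $n\le n'$ and subtracting, Step 1 together with $\xi_0$ witnessing the tempered variation of $\overline\psi$ gives
\begin{equation*}
\Big|\int_0^t\psi(X_\tau(x,s))\,d\tau-\int_0^t\psi(X_\tau(y,u))\,d\tau\Big|\le 6\|\psi\|_\infty C_\rho+\gamma_f(\overline\psi,n,\xi_0)+(n'-n)\|\psi\|_\infty C_\rho,
\end{equation*}
the first term absorbing the four edge corrections and the $j=0$ discrepancy $|\overline\psi(x)-\overline\psi(y)|\le 2\|\psi\|_\infty C_\rho$, and the last term bounding the $n'-n$ extra summands $\overline\psi(f^j(y))$.

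\emph{Step 3 (sublinearity) and the main obstacle.} Since $S_n\rho(x)\le s+t$ and $\rho\ge c$, we get $n\le(C_\rho+t)/c=O(t)$ uniformly in $(x,s)$, so $\gamma_f(\overline\psi,n,\xi_0)=o(t)$ uniformly; and for the last term, using $y\in B_d(x,n,\xi_0)$ with $\xi_0$ witnessing the tempered variation of $\rho$, together with $S_{n'}\rho(y)\le u+t$ and $S_{n+1}\rho(x)>s+t$,
\begin{equation*}
(n'-n)\,c\le S_{n'}\rho(y)-S_n\rho(y)\le (u+t)-\big(S_n\rho(x)-\gamma_f(\rho,n,\xi_0)\big)\le 2C_\rho+\gamma_f(\rho,n,\xi_0)=o(t),
\end{equation*}
whence $n'-n=o(t)$. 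Dividing the bound of Step 2 by $t$ and letting $t\to\infty$ gives $\frac1t\gamma(\psi,t,\delta)\to 0$, i.e.\ $\psi$ has tempered variation. The only non-routine point is Step 1: making the comparison between Bowen balls in $(M_\rho,d_\rho)$ and in $(M,d)$ work without Lipschitz hypotheses, and with a modulus uniform in the base point — which is exactly what the definition of $\gamma(\psi,\cdot,\cdot)$ requires. Everything else is Abramov bookkeeping.
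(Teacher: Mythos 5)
Your proposal is correct and follows essentially the same route as the paper's proof: split the flow integral at fiber crossings into a Birkhoff sum of $\overline{\psi}$ plus uniformly bounded edge terms, control the lap-number mismatch using the tempered variation of $\rho$ together with $\inf\rho>0$, and divide by $t$. The only difference is that your Step 1 spells out the passage from flow Bowen balls in $d_\rho$ to base Bowen balls in $d$, which the paper simply asserts as ``$y\in B_f(x,n,\delta)$'' without adjusting the radius; your more careful treatment is warranted since $f$ is not assumed Lipschitz, and the argument you sketch for it is the right one.
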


\begin{proof}
Given $t, \delta>0$ and points $(x,t_{1}) \in M_\rho$ and $(y,t_{2}) \in B((x,t_{1}) , t , \delta)$, using that $\inf \rho >0$,
there exists $n \in \NN_0$ such that either (i) $S_{n}\rho (x) \leq t$ and $S_{n}\rho(y) \le t < S_{n+1}\rho(y) $
or (ii) $S_{n}\rho (y) \leq t$ and $S_{n}\rho(x) \le t < S_{n+1}\rho(x)$.
Assume that (i) holds since the other case is completely analogous. Then
\begin{align*}
\Big| \int_{0}^{t}\psi(X_{s}(x,t_{1})) - \psi(X_{s}(y,t_{2})) ds \Big|
	& \le \Big |  \int_{0}^{S_{n}\rho(x)}\psi(X_{s}(x) ds - \int_{0}^{S_{n}\rho(y)}\psi(X_{s}(y) ds \,\Big| \\
	& + \Big |  \int_{S_{n}\rho(x)}^{t}\psi(X_{s}(x) ds   - \int_{S_{n}\rho(y)}^{t}\psi(X_{s}(y)ds \Big| \\
	& + \Big |  \int_{0}^{t_1} \psi(X_{s}(x) ds - \int_{0}^{t_2} \psi(X_{s}(y) ds \,\Big| \\
	%
	& \le |S_{n}\overline{\psi}(x) - S_{n}\overline{\psi}(y)| + \Big| \int_{S_{n}\rho(x)}^{S_{n}\rho(y)}\psi(X_{s}(x))ds \Big| \\
	& + \Big| \int_{S_{n}\rho(y)}^{t} \psi(X_{s}(x)) - \psi(X_{s}(y)) ds \Big| + (t_{1} + t_{2})\sup|\psi|.
\end{align*}
where we used $\bar\psi(x) = \int_{0}^{\rho(x)} \psi(X_{s}(x)) \, ds$.
Furthermore, we note that $y \in B_f(x,n,\delta)$ and that $\frac{t}{\inf\rho} \le n \le \frac{t}{\sup\rho}$ and, consequently, $n=n(t) \to \infty$ as $t\to\infty$.
This yields that
\begin{align*}
\Big| \frac1t \int_{0}^{t}\psi(X_{s}(x)) - \psi(X_{s}(y)) ds \Big|
	& \leq
	\frac{\sup |\psi|}{\inf \rho}
	  \Big[  \frac1n |S_{n}\overline{\psi}(x) - S_{n}\overline{\psi}(y)| +  \frac1n |S_{n}\rho(x) - S_{n}\rho(y)|  \Big] \\
	& +  \frac3n \frac{ \sup \rho \, \sup |\psi|}{\inf \rho}
\end{align*}
tends to zero as $t\to\infty$. This proves that $\psi$ has tempered variation.
\end{proof}

\begin{proposition}
Let $M$ be a compact metric space, $(X_{t})_{t}$ be a suspension semiflow over a bi-Lipschitz homeomorphism $f: M\to M$
with a H\"older continuous roof function $\rho$ and let $\mu$ be an $f$-invariant probability measure.
Suppose that for every H\"older continuous observable $g$ there exists a constant $D> 0$ such that
$$
C_\epsilon(g)
	:= \sup_{n \in \NN}\sup_{y \in B(x , n , \vep)}\Big| \sum_{i=0}^{n-1}g(f^{i}(x)) - \sum_{i=0}^{n-1}g(f^{i}(y))\Big| \leq D\vep,
$$
for every $\vep > 0$, and that $\phi:M \rightarrow \R$ is a potential bounded. Then
\begin{itemize}
\item[(i)] If $\mu$ is a weak Gibbs measure for $f$ with respect to $\overline{\phi}$ then $(\mu \times \Leb) / \int \rho d\mu$ is weak Gibbs measure
  	for $(X_t)_{t\in\mathbb R}$ with respect to $\phi$.
\item[(ii)] If $\mu$ is an $f$-invariant Gibbs measure for $f$ with respect to $\overline{\phi}$ then
	$(\mu \times \Leb) / \int \rho d\mu$ is a Gibbs measure for $(X_t)_{t\in\mathbb R}$ with respect to $\phi$.
\end{itemize}
\end{proposition}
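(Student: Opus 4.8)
The plan is to pass from Bowen--Walters dynamical balls of the semiflow to dynamical balls of the base map $f$, compute the $\bar\mu$-measure of the latter by Fubini, and then insert the (weak) Gibbs property of $\mu$ for $f$. Since $f,f^{-1}$ are Lipschitz and $\rho$ is (H\"older, hence after the usual reduction Lipschitz) bounded away from $0$ and $\infty$, the estimate \eqref{eq:metric.equivalence} lets us work with the pseudo-distance $d_\pi$ of \eqref{eq:dist} instead of $d_\rho$, at the cost of a fixed constant $K$ in the scale. The geometric heart is the following comparison of dynamical balls. Fix $(x,s)\in M_\rho$ and $t>0$ and let $n=n(x,s,t)\in\mathbb N_0$ be the lap number of \eqref{eq:turns}, so $\sum_{j=0}^{n-1}\rho(f^j(x))\le s+t<\sum_{j=0}^{n}\rho(f^j(x))$. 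I claim there are constants $0<c_1<c_2$, depending only on $\inf\rho,\sup\rho$ and the Lipschitz constants of $f,f^{-1},\rho$, such that for all small $\vep>0$
$$
\big\{(y,s')\colon y\in B_f(x,n+1,c_1\vep),\ |s-s'|<c_1\vep\big\}
\ \subseteq\ B\big((x,s),t,\vep\big)
\ \subseteq\ \big\{(y,s')\colon y\in B_f(x,n,c_2\vep),\ |s-s'|<c_2\vep\big\},
$$
with the natural modification of the vertical interval (via the identification $\sim$, of the same Lebesgue length) when $s$ lies within $O(\vep)$ of $0$ or of $\rho(x)$, and where $B_f$ denotes the $f$-Bowen ball.

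I would prove the comparison by the lap-number bookkeeping already carried out in the proof of Theorem~\ref{thm:gluingAA}. If $(y,s')$ $\vep$-shadows the $(X_t)_t$-orbit of $(x,s)$ on $[0,t]$, then along the orbit the lap numbers of $x$ and of $y$ differ by at most one, since $|s-s'|<\vep$, since $|\rho(z)-\rho(w)|$ is small for $d(z,w)<\vep$ by uniform continuity of $\rho$, and since $|S_j\rho(x)-S_j\rho(y)|\le C_\vep(\rho)\le D\vep$ by the distortion hypothesis applied to $g=\rho$. Splitting into the prototypical cases $j_y=j_x$ and $j_y=j_x\pm1$ and bounding $d_\pi$ by the natural horizontal and vertical segments, exactly as in the three cases of Theorem~\ref{thm:gluingAA} and Figure~\ref{fig:1}, one gets that $d_\pi(X_\tau(x,s),X_\tau(y,s'))$ is at most a fixed multiple of $d(f^{j_x}(x),f^{j_x}(y))+|s-s'|+C_\vep(\rho)/\inf\rho$, which yields the right-hand inclusion. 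The left-hand inclusion, that a point of the smaller base Bowen ball with $|s-s'|<c_1\vep$ keeps the two trajectories $\vep$-synchronised, is easier and follows along the same lines.

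Granting the comparison, since $\rho\ge\inf\rho>0$ the vertical interval $(s-c\vep,s+c\vep)$ (or its wrapped version near a fibre boundary, of equal Lebesgue length) sits inside the union of the relevant fibres, so Fubini gives
$$
\bar\mu\big(B((x,s),t,\vep)\big)\ \asymp\ \frac{\vep}{\int\rho\,d\mu}\;\mu\big(B_f(x,n,c\vep)\big),
$$
with constants depending only on $c_1,c_2,\inf\rho,\sup\rho$. Now apply the discrete-time weak Gibbs property of $\mu$ for $f$ with respect to $\bar\phi$, that is $\mu(B_f(x,n,c\vep))\asymp\exp[\,S_n\bar\phi(x)-nP_\mu\,]$ with $\tfrac1n\log K^f_n(c\vep)\to0$. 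Because $S_n\bar\phi(x)=\int_0^{S_n\rho(x)}\phi(X_\tau(x,0))\,d\tau$, and $0\le s\le\sup\rho$, $0\le s+t-S_n\rho(x)<\sup\rho$ and $\phi$ is bounded, we may replace $S_n\bar\phi(x)$ by $\int_0^t\phi(X_\tau(x,s))\,d\tau$ up to an additive error at most $2\sup\rho\,\sup|\phi|$; similarly $|t-S_n\rho(x)|\le\sup\rho$, so $n$ and $t$ are comparable up to bounded additive error. Taking $\tilde P:=P_\mu/\!\int\rho\,d\mu$ for the flow constant — which vanishes in the situation of Theorem~\ref{thm:Wad}, where $\mu$ is an equilibrium state, and then the two exponents agree up to the bounded errors above — one concludes that the ratio of $\bar\mu(B((x,s),t,\vep))$ to $\exp[\int_0^t\phi(X_\tau(x,s))\,d\tau-t\tilde P]$ lies between $1/\tilde K_t(\vep)$ and $\tilde K_t(\vep)$, where $\tilde K_t$ absorbs $K^f_{n(t)}(c\vep)$, the fixed multiplicative constants and the bounded additive errors, hence $\tfrac1t\log\tilde K_t(\vep)\to0$ (using that $n(t)\to\infty$ with $n(t)/t$ bounded); this proves (i). For (ii) the identical computation with $K^f_n\equiv K$ a genuine constant turns every estimate above into a constant, so $\bar\mu$ is a Gibbs measure for the flow.

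The step I expect to be the main obstacle is the geometric comparison of dynamical balls: one must carry out, uniformly in $(x,s)$ and $t$, the lap-number analysis that sandwiches the $d_\pi$-dynamical ball of the semiflow between products of an $f$-dynamical ball and a vertical interval of comparable size. This is the measure-theoretic counterpart of the shadowing analysis in the proof of Theorem~\ref{thm:gluingAA}, and the distortion hypothesis $C_\vep(g)\le D\vep$, applied to the roof function, is precisely what keeps the drift of the time coordinate of order $\vep$, so that the two sides of the sandwich have comparable size with constants independent of $(x,s)$ and $t$.
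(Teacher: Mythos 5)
Your strategy is essentially the one the paper uses: sandwich the Bowen ball of the semiflow between products of an $f$-Bowen ball and a vertical interval of length comparable to $\vep$, compute $\bar\mu$ of the product by Fubini, and insert the discrete-time (weak) Gibbs property of $\mu$. The only structural difference is that the paper obtains the ball comparison by citing \cite[Proposition 19]{BS}, which gives exactly $B_{M_\rho}((x,s),S_m\rho(x),\vep/\kappa)\subset B_M(x,m,\vep)\times(s-\vep,s+\vep)\subset B_{M_\rho}((x,s),S_m\rho(x),\kappa\vep)$, together with the trivial monotonicity of dynamical balls in $t$ for $t$ between $S_m\rho(x)$ and $S_{m+1}\rho(x)$, whereas you rederive this sandwich by the lap-number analysis of Theorem~\ref{thm:gluingAA}. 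That is legitimate, and your Fubini-plus-bounded-error computation makes explicit what the paper compresses into ``the desired result follows by simple integration''.

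The one step you must tighten is the identification of the flow's Gibbs constant. Replacing $S_n\bar\phi(x)$ by $\int_0^t\phi(X_\tau(x,s))\,d\tau$ indeed costs only $2\sup\rho\,\sup|\phi|$; but replacing $nP_\mu$ by $t\tilde P$ with $\tilde P=P_\mu/\int\rho\,d\mu$ costs, up to a bounded term, $\frac{|P_\mu|}{\int\rho\,d\mu}\,\bigl|S_n\rho(x)-n\int\rho\,d\mu\bigr|$, which is neither uniformly bounded (as item (ii) requires) nor uniformly $o(t)$ in $x$ (as item (i) requires) unless $P_\mu=0$ or $\rho$ is cohomologous to a constant. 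So the assertion that ``the two exponents agree up to the bounded errors above'' is only valid when $P_\mu=0$. The clean formulation is that $\bar\mu$ is (weak) Gibbs for the flow with respect to $\phi$ and constant $\tilde P$ exactly when $\mu$ is (weak) Gibbs for $f$ with respect to the potential $\bar\phi-\tilde P\rho$ and constant $0$ --- which is the normalization actually available in the application via Theorem~\ref{thm:Wad}, where $\Ptop(f,\bar\phi-P\rho)=0$. To be fair, the paper's own proof is silent on this point, so your version, which at least flags it, is not worse; but as written the passage from the base constant to the flow constant is a genuine gap for $P_\mu\neq 0$.
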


\begin{proof}
Under the assumptions of the proposition, is proven in \cite[Proposition 19]{BS} that there exists $\kappa>0$
so that for every $x\in M$, $0<s<\rho(x)$ and $m\in \mathbb N$ it holds that
$$
B_{M_\rho} \big((x,s), S_m \rho(x), \frac1\kappa \vep \big)
	\subset B_M(x,m, \vep) \times (s-\vep, s+\vep)
	\subset B_{M_\rho} \big((x,s), S_m \rho(x), \kappa \vep \big)
$$
for every sufficiently small $\vep$. In general, for $t>0$ there exists $m\in \mathbb N$, depending on $x$,
so that $S_m\rho(x) \le t+ s < S_{m+1} \rho(x)$ and so
$$
B_{M_\rho} \big((x,s), S_{m+1} \rho(x), \vep \big)
	\subset B_{M_\rho} \big((x,s), t,  \vep \big)
	\subset B_{M_\rho} \big((x,s), S_m \rho(x), \vep \big).
$$
This implies that
$$
\mu ( B_M(x,m+1, \frac\vep\kappa) \times (s-\frac\vep\kappa, s+\frac\vep\kappa)  )
	\le \mu (  B_{M_\rho} \big((x,s), t,  \vep \big)  )
	\le \mu ( B_M(x,m, \kappa\vep) \times (s-\kappa\vep, s+\kappa \vep)  )
$$
and the desired result follows by simple integration.
\end{proof}

\subsection*{Acknowledgments}
The authors are deeply grateful to V. Ara\'ujo and A. Arbieto for the incentive and suggestions that helped to improve the
structure of the paper, and to C. Zhao for pointing out a mistake in a previous version of Theorem~B.
The second author was partially supported by a CNPq-Brazil posdoctoral fellowship at University of Porto and
by CMUP (UID/MAT/00144/2013), which is funded by FCT (Portugal) with national (MEC) and European structural
funds through the programs FEDER, under the partnership agreement PT2020.


\def\cprime{$'$}

\end{document}